\newtheorem{theorem}{Theorem}[section]
\newtheorem{proposition}{Proposition}[section]
\newtheorem{lemma}{Lemma}[section]
\newtheorem{remark}{Remark}[section]
\newcommand{\N}{\mathbb{N}}
\newcommand{\weakc}{\rightharpoonup}
\newcommand{\R}{\mathbb{R}}
\newcommand{\dnu}{\partial_\nu}
\newcommand{\grad}{\nabla}
\newcommand{\ep}{\varepsilon}
\newcommand{\domega}{{D \setminus \overline{\Omega}}}
\newcommand\dd{\,\mathrm{d}}
\numberwithin{equation}{section}
\begin{document}

\begin{flushleft}
\Large 
\noindent{\bf \Large Sampling methods for recovering buried corroded boundaries from partial electrostatic Cauchy data}
\end{flushleft}

\vspace{0.2in}

{\bf  \large Isaac Harris and Heejin Lee}\\
\indent {\small Department of Mathematics, Purdue University, West Lafayette, IN 47907 }\\
\indent {\small Email: \texttt{harri814@purdue.edu} and  \texttt{lee4485@purdue.edu} }\\

{\bf  \large Andreas Kleefeld}\\
\indent {\small Forschungszentrum J\"{u}lich GmbH, J\"{u}lich Supercomputing Centre, } \\
\indent {\small Wilhelm-Johnen-Stra{\ss}e, 52425 J\"{u}lich, Germany}\\
\indent {\small University of Applied Sciences Aachen, Faculty of Medical Engineering and } \\
\indent {\small Technomathematics, Heinrich-Mu\ss{}mann-Str. 1, 52428 J\"{u}lich, Germany}\\
\indent {\small Email: \texttt{a.kleefeld@fz-juelich.de}}\\


\begin{abstract}
\noindent We consider the inverse shape and parameter problem for detecting corrosion from partial boundary measurements. This problem models the non-destructive testing for a partially buried object from electrostatic measurements on the accessible part of the boundary. The main novelty is the extension of the linear sampling and factorization methods to an electrostatic problem with partial measurements. These methods so far have only mainly applied to recovering interior defects which is a simpler problem. Another important aspect of this paper is in our numerics, where we derive a system of boundary integral equations to recover the mixed Green's function which is needed for our inversion. With this, we are able to analytically and numerically solve the inverse shape problem. For the inverse parameter problem, we prove uniqueness and Lipschitz-stability (in a finite dimensional function space) assuming that one has the associated Neumann-to-Dirichlet operator on the accessible part of the boundary. 
\end{abstract}

\section{Introduction}
In this paper, we consider an inverse shape and parameter problem coming from electrical impedance tomography (EIT). The model we study is for a partial buried object that was degraded via corrosion. This problem is motivated by non-destructive testing where one wishes to detect/recover the corroded part of the boundary without removing the object. To this end, we will study the linear sampling and factorization methods for recovering the corroded boundary. These methods were first introduced in \cite{CK,firstFM}, respectively. This is novel due to the fact that we have data only on the accessible part of the boundary and we wish to recover the rest of the boundary. Our inversion is done by embedding the defective region into a `healthy' region and comparing the gap in voltages. The linear sampling and factorization methods have been studied for similar problems in \cite{EIT-granados1,EIT-granados2,GLSM-elastic,EIT-finiteElectrode,eit-harris,regfm1,applied-lsm} where one wishes to recover interior defects from either full or partial boundary data. Again, this problem is different in the fact that we have partial boundary data and we wish to recover unaccessible part of the boundary. 

In order to solve the inverse shape problem we will consider two well known {\it qualitative} reconstruction methods i.e. the linear sampling and factorization methods. These methods have been greatly studied over the years for different inverse shape problems, see \cite{fm-waveguide,FM-wave,fm-gbc,fm-GR,FMiso,glsm-stokes,lsm-heat,LSM-periodic} as well as the manuscripts \cite{Colto2013,kirschbook} and the references therein. Iterative methods for this problem were studied in \cite{CK-ibc,CK-ibc2} which extends the method presented in \cite{Rundell}. In the aforementioned papers, a non-linear system of boundary integral equations was used to solve the inverse shape and parameter problems. We also note that in \cite{CK-gibc} the authors used a similar iterative method to solve the problem with a generalized impedance condition. One of the main advantageous for using a qualitative method is the fact that one needs little {\it a priori} information about the region of interest.  On the other hand iterative methods will often converge to a local minima rather than the global minima if the initial guess is not sufficiently close to the target. Therefore, in many non-destructive testing applications it may be useful to use a qualitative method. 

We also consider the inverse parameter problem. Here, we will assume that the corroded part of the boundary is known/recovered. Then, we prove that the knowledge of the Neumann-to-Dirichlet operator on the accessible part of the boundary can uniquely recover the corrosion(i.e. Robin) parameter. Once we have proven uniqueness, we then turn our attention to stability. Due to the fact that inverse EIT problem are exponentially ill-posed there is no hope to obtain a Lipschitz--stability estimate on standard function spaces. Here, we appeal to the techniques in \cite{EIT-InnerStability,eit-transmission2,invrobin-Meftahi} to prove Lipschitz--stability assuming the parameter is in a finite dimensional function space. This is useful for numerical reconstructions of the parameter since one will often discretize the unknown function to be a linear combination of finite basis functions. Numerical reconstructions of the parameter are not studied here but the algorithm in \cite{eit-transmission1} can also be applied to this problem.

The rest of the paper is structured as follows. In Section \ref{sect-dp} we setup the direct and inverse problem under consideration. Then, in Section \ref{sect-ip1} we consider the inverse shape problem where we give the theoretical justification of the linear sampling and factorization methods for our model. This will give a computationally simple yet analytically rigorous method for recovering the corroded part of the boundary. Next, in Section \ref{sect-ip2} we consider the inverse parameter problem assuming that the corroded boundary is known/recovered, where we prove uniqueness and stability with respect to the Neumann-to-Dirichlet operator. Then, we provided numerical examples in Section \ref{sect-numerics} for recovering the corroded boundary. Finally, a summary and conclusion is given.

\section{The Direct Problem}\label{sect-dp}
In this section, we will discuss the direct problem associated with the inverse problems under consideration. Again, this problem comes from EIT where one applies a current on the accessible part of the boundary and measures the resulting voltage. To begin, we let the known region $D \subset \mathbb{R}^m$ for $m=2, 3$ be an open bounded and simply connected domain with the piecewise $C^1$ boundary $\partial D$. The boundary $\partial D$ can be decomposed into 
$$\partial D =  \overline{\Gamma_N} \cup \overline{\Gamma_D} \quad \text{where} \quad \Gamma_N \cap \Gamma_D = \emptyset $$
and are relatively open subsets of $\partial D$. We assume that part of the region $D$ has been buried such that $\Gamma_N$ is the accessible part of the boundary with $\Gamma_D$ being the part of the boundary that has been buried. Being buried has caused part of the region to be corroded away. The part of the region that has corroded away will be denoted $\Omega$. To this end, we let $\Omega \subset D$ be an open subset of $D$ such that a part of the boundary $\partial \Omega$ is $\Gamma_D$ and the other part of the boundary is $C^1$ and denoted by $\Gamma_C$. Therefore, we have that 
$$\partial (D \setminus \overline{\Omega}) = \overline{\Gamma_N} \cup \overline{\Gamma_C} \quad \text{where} \quad \Gamma_N \cap \Gamma_C = \emptyset $$
and are relatively open. In Figure \ref{domain}, we have illustrated the aforementioned setup. 
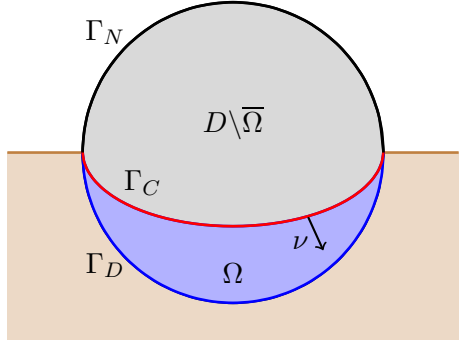
\begin{figure}[H]
    \centering
    \begin{tikzpicture}
       \draw [fill=brown!30, draw=none] (-3,-2.5) rectangle (3,0);
			\draw[line width=1pt, brown] (-3,0) -- (3,0);
        \draw[line width=1pt, smooth, black, fill=gray!30, scale=2] plot[domain=0:2*pi] ({cos(\x r)}, {sin(\x r)});
        \draw[line width=1pt, smooth, blue, fill=blue!30, scale=2] plot[domain=pi:2*pi] ({cos(\x r)}, {sin(\x r)});
        \draw[line width=1pt, smooth, blue, fill=gray!30, scale=2] plot[domain=pi:2*pi] ({cos(\x r)}, {0.5*sin(\x r)+0.01});
        \draw[line width=1pt, smooth, red, fill=none, scale=2] plot[domain=pi:2*pi] ({cos(\x r)}, {0.5*sin(\x r)+0.01});
        \draw[line width=1pt, smooth, black, fill=none, scale=2] plot[domain=0:pi] ({cos(\x r)}, {sin(\x r)});
     
      \draw[->,thick] (1,-0.86) -- (1.2,-1.3);
        \node at (0.9,-1.2) {$\nu$};
        
        \node at (0,0.4) {$D\backslash \overline{\Omega}$};
        \node at (0,-1.6) {$\Omega$}; 
        \node at (-1.7,-1.5) {$\Gamma_D$}; 
        \node at (-1.7,1.6) {$\Gamma_N$};  
        \node at (-1.2,-0.4) {$\Gamma_C$}; 
    \end{tikzpicture}
    \caption{The partially buried domain $D$ with the corroded part $\Omega$.}
    \label{domain}
\end{figure}

In order to determine if there is a non-trivial corroded region $\Omega$ we assume that a current denoted $g$ is applied to the accessible part of the boundary $\Gamma_N$. This will produce an electrostatic potential function $u$ for the defective material $\domega$.  This gives that the direct problem can be modeled by the mixed Neumann--Robin boundary value problem: given $g \in L^2(\Gamma_N)$, determine $u \in H^{1}(\domega)$ such that
\begin{align}\label{dp1}
\Delta u= 0 \quad  \text{in } D\setminus \overline{\Omega}, \quad  \dnu{u} = g \quad  \text{on } \Gamma_N, \, \text{ and } \, \dnu{u} +\gamma u  = 0 \quad  \text{on }\Gamma_C.
\end{align}
Here $\nu$ denotes the outward unit normal to $\domega$ and the corrosion coefficient $\gamma \in L^\infty(\Gamma_C)$. We will assume that there are two real--valued constants $\gamma_{\text{max}}$ and $\gamma_{\text{min}}$ such that the corrosion coefficient satisfies 
$$0< \gamma_{\text{min}} \leq  \gamma (x) \leq \gamma_{\text{max}}  \quad \text{for a.e.} \quad x \in \Gamma_C.$$ 
Note that our notation is that $\Gamma_N$ is the Neumann boundary and that $\Gamma_C$ corresponds to the corroded/Robin boundary.

Now, we wish to establish the well-posedness of the direct problem \eqref{dp1}. This can be done by considering the equivalent variational formulation of \eqref{dp1}. To this end, we can take $\varphi \in H^{1}(\domega)$ and using Green's first identity we have that 
\begin{align}\label{uvarf}
\int_{\domega} \nabla u \cdot \nabla {\varphi} \dd x + \int_{\Gamma_C} \gamma u {\varphi} \dd s
=
\int_{\Gamma_N} g {\varphi} \dd s.
\end{align}
Note that \eqref{uvarf} is satisfied for any test function $\varphi \in H^{1}(\domega)$. Clearly this implies that the variational formulation is given by 
$${A}(u,\varphi) = \ell (\varphi)$$ 
where the sesquilinear form ${A}(\cdot , \cdot): H^1(\domega) \times H^1(\domega) \rightarrow \mathbb{R}$ is given by
\begin{align*}
{A}(u, \varphi) = 
\int_{\domega} \nabla u \cdot \nabla {\varphi} \dd x + \int_{\Gamma_C} \gamma u {\varphi} \dd s
\end{align*}
and
the conjugate linear functional $\ell: H^1(\domega) \rightarrow \mathbb{R}$ is given by
$$\ell(\varphi) = \int_{\Gamma_N} g {\varphi} \dd s.$$
Here, the integrals over $\Gamma_C$ and $\Gamma_N$ are interpreted as the inner--product on $L^2(\Gamma_C)$ and $L^2(\Gamma_N)$, respectively. These integrals are well defined by the Trace Theorem. For the well-posedness, notice that 
$${A}(u, u) \geq \|\grad u \|^2_{L^2(\domega)} +\gamma_{\text{min}} \|u\|^2_{L^2(\Gamma_C)}$$
which implies that $A(\cdot , \cdot)$ is coercive on $H^1(\domega)$ by appealing to a standard Poincar\'{e} type argument (see for e.g. \cite{salsa} p. 487). From the Trace Theorem, we have that 
\begin{align*}
|\ell(\varphi)| \leq C\|g\|_{L^2(\Gamma_N)}\|\varphi\|_{H^1(\domega)}.
\end{align*}
With this we have the following result. 
\begin{theorem}\label{wpnu}
The mixed Neumann--Robin boundary value problem \eqref{dp1} has a unique solution $u \in H^1(\domega)$ that satisfies the estimate
\begin{align*}
\|u\|_{H^1(\domega)} \leq C \|g\|_{L^2(\Gamma_N)}
\end{align*}
with $C$ independent of $g \in L^2(\Gamma_N)$.
\end{theorem}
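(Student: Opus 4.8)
The plan is to recognize the statement as a direct application of the Lax--Milgram theorem to the variational formulation $A(u,\varphi)=\ell(\varphi)$ already derived in \eqref{uvarf}. Working on the real Hilbert space $H^1(\domega)$, I need to verify the three standard hypotheses — boundedness of the sesquilinear form $A(\cdot,\cdot)$, coercivity of $A(\cdot,\cdot)$, and boundedness of the functional $\ell$ — and then read off both the existence-uniqueness conclusion and the stability estimate from a single test.

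First I would establish boundedness of $A$. Applying the Cauchy--Schwarz inequality term by term gives
$$|A(u,\varphi)| \leq \|\grad u\|_{L^2(\domega)}\|\grad \varphi\|_{L^2(\domega)} + \gamma_{\text{max}}\|u\|_{L^2(\Gamma_C)}\|\varphi\|_{L^2(\Gamma_C)},$$
and the two boundary factors are controlled by $H^1(\domega)$ norms through the Trace Theorem, yielding $|A(u,\varphi)| \leq C\|u\|_{H^1(\domega)}\|\varphi\|_{H^1(\domega)}$. The boundedness of $\ell$, namely $|\ell(\varphi)| \leq C\|g\|_{L^2(\Gamma_N)}\|\varphi\|_{H^1(\domega)}$, has already been recorded in the excerpt.

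The coercivity is the one place that needs care, and it is the main obstacle. The lower bound $A(u,u) \geq \|\grad u\|^2_{L^2(\domega)} + \gamma_{\text{min}}\|u\|^2_{L^2(\Gamma_C)}$ controls the gradient and the boundary trace on $\Gamma_C$, but not the full $L^2(\domega)$ norm of $u$; to close the gap I would invoke a generalized Poincar\'{e} inequality asserting that
$$\|u\|^2_{L^2(\domega)} \leq C\left(\|\grad u\|^2_{L^2(\domega)} + \|u\|^2_{L^2(\Gamma_C)}\right)$$
for all $u \in H^1(\domega)$. This is exactly the Poincar\'{e}-type argument referenced after the coercivity display; it follows from the connectedness of $\domega$ together with the fact that $\Gamma_C$ carries positive surface measure, via a standard compactness--contradiction argument (a normalized sequence violating the inequality would, after extracting a convergent subsequence, tend to a nonzero constant whose trace on $\Gamma_C$ vanishes, which is impossible). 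Factoring out $\min(1,\gamma_{\text{min}})$ and combining then gives $A(u,u) \geq \alpha \|u\|^2_{H^1(\domega)}$ for some $\alpha > 0$.

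With all three hypotheses verified, Lax--Milgram produces a unique $u \in H^1(\domega)$ solving $A(u,\varphi)=\ell(\varphi)$ for every $\varphi \in H^1(\domega)$, which is equivalent to \eqref{dp1}. Finally I would obtain the stability estimate by testing with $\varphi = u$: coercivity together with the bound on $\ell$ gives $\alpha\|u\|^2_{H^1(\domega)} \leq A(u,u) = \ell(u) \leq C\|g\|_{L^2(\Gamma_N)}\|u\|_{H^1(\domega)}$, and dividing through by $\|u\|_{H^1(\domega)}$ yields the claimed bound with a constant independent of $g$.
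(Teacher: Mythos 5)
Your proposal is correct and follows the same route as the paper: the paper establishes well-posedness precisely by verifying coercivity of $A(\cdot,\cdot)$ via the lower bound $A(u,u)\geq \|\grad u\|^2_{L^2(\domega)}+\gamma_{\text{min}}\|u\|^2_{L^2(\Gamma_C)}$ together with the cited Poincar\'e-type inequality, and boundedness of $\ell$ via the Trace Theorem, with the Lax--Milgram framework left implicit. You have merely filled in the details (the compactness--contradiction proof of the generalized Poincar\'e inequality and the test with $\varphi=u$ for the stability estimate) that the paper delegates to the reference.
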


With the well-poseness of \eqref{dp1} established, we now consider an auxiliary boundary value problem for the electrostatic potential in the healthy domain $D$: given $g \in L^2(\Gamma_N)$, determine $u_0 \in H^1(D)$ such that 
\begin{align}\label{dpu0}
\Delta u_0= 0  \text{ in } D, \quad \dnu{u_0} = g  \text{ on } \Gamma_N, \,  \text{ and } \, u_0 = 0 \text{ on }\Gamma_D.
\end{align}
Similarly, it can be shown that the above boundary value problem \eqref{dpu0} is well-posed with the estimate 
$$\|u_0\|_{H^1(D)} \leq C \|g\|_{L^2(\Gamma_N)}.$$ 
This can be done by again appealing to the variational formulation as well as the fact that $u_0$ satisfies the Poincar\'{e} estimate 
$$\|u_0\|_{H^1(D)} \leq C \| \grad u_0 \|_{L^2(D)} $$
due to the zero trace on $\Gamma_D$ (see \cite{salsa} p. 486). Note that in our notation $\Gamma_D$ is the part of the boundary where we impose the homogeneous Dirichlet condition. Since $D$ is known a priori we have that $u_0$ can always be computed numerically. 

In order to determine the corroded subregion $\Omega$, we will assume that the $u|_{\Gamma_N}$ can be measured and that $u_0|_{\Gamma_N}$ can be computed for any current $g\in L^2(\Gamma_N)$.  Now we define the \textit{Neumann-to-Dirichlet} (NtD) operators 
\begin{align}\label{NtDop}
\Lambda \,  \textrm{ and } \, \Lambda_0 : L^2(\Gamma_N) \to L^2(\Gamma_N) \quad \text{given by} \quad  \Lambda g = u|_{\Gamma_N} \quad \text{and} \quad \Lambda_0 g = u_0|_{\Gamma_N},
\end{align}
where $u$ and $u_0$ are the unique solutions to \eqref{dp1} and \eqref{dpu0}, respectively. From the well-posedness of the boundary value problems it is clear that the operators $\Lambda$ and $\Lambda_0$ are well-defined bounded linear operators. The inverse problems that we are interested in are the \textit{inverse shape problem} of determining the corroded region $\Omega$ from the knowledge of the difference $(\Lambda-\Lambda_0)$ and the \textit{inverse impedance problem} of recovering the corrosion coefficient $\gamma$ provided that $\Gamma_C$ is known. In the following section, we will study the linear sampling and factorization methods to recover $\Omega$. Then, we will turn our attention to proving that $\Lambda$ uniquely recovers the corrosion coefficient $\gamma$ as well as provide a stability estimate.

\section{The Inverse Shape Problem}\label{sect-ip1}
In this section, we are interested in the inverse shape problem of recovering $\Omega$ from the knowledge of the NtD operators. In order to solve this problem we will consider the linear sampling and factorization methods associated with $(\Lambda-\Lambda_0)$. Our analysis will show that the linear sampling method can give an approximate reconstruction of $\Omega$ where as the factorization method can be used under a stricter set of assumptions on the corrosion coefficient. The factorization method is mathematically more advantageous to use due to the fact that it gives an explicit characterization of the region of interest from the spectral decomposition of an operator defined by the difference of the NtD maps. In either case, we need to decompose the operator $(\Lambda-\Lambda_0)$ to obtain a more explicit relationship with the unknown region $\Omega$.  

To begin, we first derive an initial factorization of the operator $(\Lambda-\Lambda_0)$. Therefore, we first notice that the difference of the electrostatic potentials satisfies 
$$\Delta (u-u_0)= 0  \text{ in } \domega, \quad \dnu{(u-u_0)} = 0  \text{ on } \Gamma_N, \,  \text{ and } \,(u- u_0)|_{\Gamma_C} \in H^{1/2}(\Gamma_C). $$
This motivates us to consider the auxiliary boundary value problem: given $\varphi \in H^{1/2}(\Gamma_C)$, determine  $w \in H^1(\domega)$ such that 
\begin{align}\label{dp2}
\Delta w= 0  \text{ in } \domega, \quad \dnu{w} = 0 \,  \text{ on } \, \Gamma_N, \,  \text{ and } \,w=\varphi \,  \text{ on } \, \Gamma_C.
\end{align}
Arguing similarly to the previous section, we have that \eqref{dp2} is well-posed which implies that we can define
\begin{align}\label{opg}
G: H^{1/2}(\Gamma_C) \to L^2(\Gamma_N) \quad \text{given by } \quad G\varphi=w|_{\Gamma_N},
\end{align}
where $w$ is the solution to \eqref{dp2} as a bounded linear operator by appealing to the Trace Theorem. By the well-posedness of \eqref{dp2}, we see that if 
$$ \varphi =  (u- u_0)|_{\Gamma_C} \quad \text{ we obtain that} \quad w = (u- u_0) \, \, \text{ in } \, \domega.$$
Now, we further define the bounded linear operators 
\begin{align}\label{opl}
L \, \text{ and } \, L_0: L^2(\Gamma_N) \rightarrow H^{1/2}(\Gamma_C) \quad \text{given by } \quad Lg=u|_{\Gamma_C} \, \text{ and } \, L_0 g=u_0|_{\Gamma_C}. 
\end{align}
With this we have our initial factorization $(\Lambda-\Lambda_0) = G(L-L_0)$. 

With our initial factorization in hand we will analyze the properties of the operators defined above. First, we notice that due to the compact embedding of $H^{1/2}(\Gamma_N)$ into $L^2(\Gamma_N)$ we have compactness of the operator $G$ defined in \eqref{opg}. We also notice that by Holmgren's theorem (see for e.g. \cite{holmgren}) if $\varphi$ is in the null-space of $G$, this would imply that 
$$w = \dnu{w} = 0 \,  \text{ on } \, \Gamma_N \quad \text{ giving that } \quad w = 0 \,  \text{ in } \, \domega.$$
By the Trace Theorem $\varphi = 0$ which gives injectivity of the operator $G$. With this we now present a result that gives the analytical properties of the source-to-trace operator $G$. 

\begin{theorem}\label{opG1}
The operator $G: H^{1/2}(\Gamma_C) \to L^2(\Gamma_N)$ as defined in \eqref{opg} is compact and injective.
\end{theorem}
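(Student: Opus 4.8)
The plan is to establish the two properties separately, treating compactness as a consequence of a compact Sobolev embedding and injectivity as a consequence of unique continuation for harmonic functions. Both rest on the well-posedness of the auxiliary problem \eqref{dp2}, which I would first record by repeating the coercivity-plus-trace argument used for Theorem \ref{wpnu}: the solution operator $S: H^{1/2}(\Gamma_C) \to H^1(\domega)$, $\varphi \mapsto w$, is a bounded linear map.

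For compactness, the idea is to factor $G$ through a space of smoother traces. Writing $\tau: H^1(\domega) \to H^{1/2}(\Gamma_N)$ for the trace operator (bounded by the Trace Theorem) and $\iota: H^{1/2}(\Gamma_N) \hookrightarrow L^2(\Gamma_N)$ for the inclusion, we have
$$G = \iota \circ \tau \circ S.$$
Since the embedding $\iota$ of $H^{1/2}(\Gamma_N)$ into $L^2(\Gamma_N)$ is compact of Rellich type, and $S$, $\tau$ are bounded, the composition $G$ is compact, being a bounded operator composed with a compact one.

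For injectivity, I would suppose $G\varphi = w|_{\Gamma_N} = 0$. Combined with the Neumann condition $\dnu{w} = 0$ on $\Gamma_N$ from \eqref{dp2}, the harmonic function $w$ then has vanishing Cauchy data on the relatively open piece $\Gamma_N$. Holmgren's theorem, or equivalently unique continuation since $w$ is real-analytic in the interior, forces $w \equiv 0$ in a neighborhood of $\Gamma_N$, and propagating across the connected domain $\domega$ gives $w \equiv 0$ throughout. The Trace Theorem then yields $\varphi = w|_{\Gamma_C} = 0$, so $G$ is injective.

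The routine parts, namely the factorization and the standard compact embedding, present no real difficulty. The step that needs care is the unique-continuation argument: one must verify that $\domega$ is connected, which follows from the geometry of Figure \ref{domain}, where $\Omega$ is excised from the buried side and does not disconnect $D$, that $\Gamma_N$ is a genuine relatively open subset so that the Cauchy data is well defined, and that the piecewise-$C^1$ regularity of $\partial(\domega)$ is compatible with applying Holmgren across $\Gamma_N$. I expect this to be the main obstacle, to be resolved by the interior analyticity of harmonic functions together with the connectedness of the healthy region.
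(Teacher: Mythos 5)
Your proposal is correct and follows essentially the same route as the paper: compactness via the bounded solution operator composed with the compact embedding of $H^{1/2}(\Gamma_N)$ into $L^2(\Gamma_N)$, and injectivity via Holmgren's theorem applied to the vanishing Cauchy data on $\Gamma_N$ followed by the Trace Theorem. The additional care you take with connectedness and the factorization $G=\iota\circ\tau\circ S$ simply makes explicit what the paper leaves implicit.
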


With this, in order to further analyze the operator $G$ we need to compute its adjoint. The adjoint operator $G^*$ will be a mapping from $L^2(\Gamma_N)$ into $\widetilde{H}^{-1/2}(\Gamma_C)$. Note that the adjoint is computed via the relationship
$$\big(G \varphi , \psi \big)_{L^2(\Gamma_N)} = \big\langle \varphi , G^*\psi \big\rangle_{\Gamma_C}$$ 
where $\big\langle \cdot , \cdot \big\rangle_{\Gamma_C}$ is the sesquilinear dual--product between the 
$$\text{Hilbert Space $H^{\pm1/2}(\Gamma_C)$ and its dual space $\widetilde{H}^{\mp 1/2} (\Gamma_C )$}$$ 
where $L^2  (\Gamma_C)$ is the associated Hilbert pivot space, see \cite{McLean} p. 99 for details. The Sobolev space $\widetilde{H}^{s} (\Gamma_C )$ is the closure of $C^\infty_0(\Gamma_C)$ with respect to the ${H}^{s} (\Gamma_C )$--norm for any $s\in \R$. Now, with this in mind we can give another result for the analytical properties of $G$. 

\begin{theorem} \label{opG2}
The adjoint $G^{*}: L^2(\Gamma_N) \to \widetilde{H}^{-1/2}(\Gamma_C)$ is given by $G^{*}\psi =  -  \partial_{\nu} v |_{\Gamma_C}$ where $v \in H^1 (\domega)$ satisfies 
\begin{align} \label{dpv}
\Delta v= 0  \text{ in } \domega, \quad \dnu{v} = \psi \,  \text{ on } \, \Gamma_N, \,  \text{ and } \, v =0 \,  \text{ on } \, \Gamma_C.
\end{align} 
Moreover, the operator $G$ has a dense range.
\end{theorem}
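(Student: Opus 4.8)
The plan is to first identify the adjoint heuristically by a formal application of Green's second identity to the two harmonic functions $w$ and $v$, and then to make the argument rigorous by replacing the boundary integrals with the duality pairing coming from the variational formulations of \eqref{dp2} and \eqref{dpv}. Concretely, take $\varphi \in H^{1/2}(\Gamma_C)$ and $\psi \in L^2(\Gamma_N)$, let $w$ solve \eqref{dp2} and $v$ solve \eqref{dpv}, and recall that both are harmonic in $\domega$. Formally, Green's second identity over $\domega$ gives $0 = \int_{\Gamma_N}\!\big(w\dnu v - v \dnu w\big)\dd s + \int_{\Gamma_C}\!\big(w\dnu v - v \dnu w\big)\dd s$. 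Using $\dnu w = 0$ and $\dnu v = \psi$ on $\Gamma_N$, together with $v = 0$ and $w = \varphi$ on $\Gamma_C$, this collapses to $\int_{\Gamma_N} w\,\psi\,\dd s = -\int_{\Gamma_C}\varphi\,\dnu v\,\dd s$, which already suggests the claimed formula $G^{*}\psi = -\dnu v|_{\Gamma_C}$.

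To turn this into a proof I would first give a precise meaning to $\dnu v|_{\Gamma_C}$ as an element of $\widetilde{H}^{-1/2}(\Gamma_C)$. Since $v$ is harmonic with $L^2$-Neumann data on $\Gamma_N$, I would define, for any $\Phi \in H^1(\domega)$,
\begin{align*}
\big\langle \dnu v|_{\Gamma_C},\, \Phi|_{\Gamma_C} \big\rangle_{\Gamma_C}
:= \int_{\domega} \grad v \cdot \grad \Phi \,\dd x - \int_{\Gamma_N} \psi\, \Phi \,\dd s .
\end{align*}
The first point to check is that the right-hand side depends only on $\Phi|_{\Gamma_C}$: if $\Phi|_{\Gamma_C}=0$, then $\Phi$ is an admissible test function for the weak form of \eqref{dpv} (whose test space consists of $H^1(\domega)$ functions vanishing on $\Gamma_C$), so $\int_{\domega}\grad v\cdot\grad\Phi\,\dd x = \int_{\Gamma_N}\psi\,\Phi\,\dd s$ and the expression vanishes. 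Boundedness of $\varphi \mapsto \langle \dnu v|_{\Gamma_C},\varphi\rangle_{\Gamma_C}$ on $H^{1/2}(\Gamma_C)$ then follows by composing with a bounded extension operator $E:H^{1/2}(\Gamma_C)\to H^1(\domega)$ and using the trace theorem on the $\Gamma_N$-term, which places $\dnu v|_{\Gamma_C}$ in the dual space $\widetilde{H}^{-1/2}(\Gamma_C)$, as required.

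With this definition in hand, I would choose the particular extension $\Phi = w$ of $\varphi = w|_{\Gamma_C}$. Because $v$ vanishes on $\Gamma_C$, it is itself an admissible test function for the weak form of \eqref{dp2}, giving $\int_{\domega}\grad v \cdot \grad w \,\dd x = 0$. Substituting into the defining identity yields $\langle \dnu v|_{\Gamma_C},\varphi\rangle_{\Gamma_C} = -\int_{\Gamma_N}\psi\, w\,\dd s = -\big(G\varphi,\psi\big)_{L^2(\Gamma_N)}$, since $w|_{\Gamma_N}=G\varphi$. Comparing with the adjoint relation $\big(G\varphi,\psi\big)_{L^2(\Gamma_N)} = \langle \varphi, G^{*}\psi\rangle_{\Gamma_C}$ (conjugation being harmless in this real-valued setting) establishes $G^{*}\psi = -\dnu v|_{\Gamma_C}$.

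For the density of the range, I would invoke the standard duality fact that $\overline{\ran{G}} = L^2(\Gamma_N)$ if and only if $G^{*}$ is injective with respect to the pairing above. Suppose $G^{*}\psi = 0$, i.e. $\dnu v|_{\Gamma_C}=0$; combined with the boundary condition $v=0$ on $\Gamma_C$, the function $v$ has vanishing Cauchy data on the open piece $\Gamma_C$. Holmgren's theorem then forces $v \equiv 0$ in $\domega$, and hence $\psi = \dnu v|_{\Gamma_N}=0$, proving injectivity of $G^{*}$ and therefore density of $\ran{G}$. The main obstacle in the whole argument is the rigorous treatment of the low-regularity normal derivative: the naive Green's identity is only formal because $\dnu v|_{\Gamma_C}$ lives merely in $\widetilde{H}^{-1/2}(\Gamma_C)$, so the work lies in showing the weak definition is independent of the extension and bounded on $H^{1/2}(\Gamma_C)$. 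Once the variational identities are used in place of Green's formula, the computation of the adjoint and the density conclusion are routine.
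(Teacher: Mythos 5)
Your proof is correct and follows essentially the same route as the paper: Green's second identity (executed at the variational level) yields the identity $\int_{\Gamma_N} w\,\psi\,\dd s = -\langle \varphi, \dnu v|_{\Gamma_C}\rangle_{\Gamma_C}$, and density of the range follows from injectivity of $G^*$ via Holmgren's theorem, exactly as in the paper. The only difference is that you take extra care to define $\dnu v|_{\Gamma_C}$ weakly as an element of $\widetilde{H}^{-1/2}(\Gamma_C)$ and to verify independence of the extension, which the paper's proof passes over silently; this is a welcome refinement rather than a different argument.
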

\begin{proof}
To prove the claim, we first note that \eqref{dpv} is well-posed for any $\psi \in L^2(\Gamma_N)$. Now, in order to compute the adjoint operator we apply Green's second identity to obtain 
$$ 0 =  \int_{\Gamma_N} {v} \, \partial_{\nu}w - w \, \partial_{\nu} \overline{v} \, \text{d}s + \int_{\Gamma_C} {v} \, \partial_{\nu}w - w \, \partial_{\nu} {v} \, \text{d}s,$$
where we have used the fact that both $w$ and $v$ are harmonic in $\domega$ as well as the fact that $\partial (D \setminus \overline{\Omega}) = \overline{\Gamma_N} \cup \overline{\Gamma_C}$ with $\Gamma_N \cap \Gamma_C = \emptyset$. Using the boundary conditions in \eqref{dp2} and \eqref{dpv} we have that 
$$\int_{\Gamma_N} w \, {\psi} \, \text{d}s = -  \int_{\Gamma_C} \varphi \, \partial_{\nu} {v} \, \text{d}s.$$
Notice, that the left hand side of the above equality is a bounded linear functional of $\varphi \in H^{1/2}(\Gamma_C)$. Therefore, by definition we have that $G\varphi=w|_{\Gamma_N}$ which implies that 
$$\big(G \varphi , \psi \big)_{L^2(\Gamma_N)} =\int_{\Gamma_N} w \, {\psi} \, \text{d}s = -  \int_{\Gamma_C} \varphi \, \partial_{\nu} {v} \, \text{d}s = \big\langle \varphi , G^*\psi \big\rangle_{\Gamma_C}$$ 
proving that $G^{*}\psi =  -  \partial_{\nu} v |_{\Gamma_C}$. 

Now, proving that the operator $G$ has a dense range is equivalent to proving that the adjoint $G^*$ is injective (see \cite{Brezis}, p. 46). So we assume that $\psi$ is in the null-space of $G^*$ which implies that 
$$v = \dnu{v} = 0 \,  \text{ on } \, \Gamma_C \quad \text{ giving that } \quad v = 0 \,  \text{ in } \, \domega$$
where we again appeal to Holmgren's Theorem proving the claim by the Trace Theorem. 
\end{proof}

Now that we have analyzed the operator $G$ we will turn our attention to studying $(L-L_0)$. This is the other operator used in our initial factorization of difference of the NtD operators. Notice that the dependance on the unknown region $\Omega$ is more explicit for these operators since they map to the traces of function on $\Gamma_N \subsetneq \partial \Omega$.

\begin{theorem}\label{opL1}
The operator $(L-L_0):L^2(\Gamma_N) \rightarrow H^{1/2}(\Gamma_C)$ as defined in \eqref{opl} is injective provided that $\gamma_{\mathrm{max}}$ is sufficiently small or $\gamma_{\mathrm{min}}$ is sufficiently large.
\end{theorem}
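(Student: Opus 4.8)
The plan is to prove injectivity directly: assuming $(L-L_0)g = 0$, I will deduce $g = 0$. Since $(L-L_0)g = (u - u_0)|_{\Gamma_C}$, the hypothesis says precisely that $u$ and $u_0$ have the same trace on $\Gamma_C$. First I would examine the difference $w = u - u_0$ on $\domega$. Using $\dnu{u} = \dnu{u_0} = g$ on $\Gamma_N$ together with the assumed equality of traces on $\Gamma_C$, the function $w$ is harmonic in $\domega$ with $\dnu{w} = 0$ on $\Gamma_N$ and $w = 0$ on $\Gamma_C$. This is a homogeneous mixed Neumann--Dirichlet problem, and a one-line energy estimate (Green's identity gives $\|\grad w\|_{L^2(\domega)}^2 = 0$, and $w=0$ on $\Gamma_C$ kills the constant) forces $w \equiv 0$. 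Hence $u = u_0$ throughout $\domega$; I emphasize that this reduction is completely independent of $\gamma$.

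The next step exploits that $u_0$ is harmonic across $\Gamma_C$. Because $\Gamma_C$ is interior to $D$, the function $u_0$ is smooth across it, so the identity $u = u_0$ on $\domega$ together with the Robin condition $\dnu{u} + \gamma u = 0$ transfers to $u_0$: writing $\nu_\Omega = -\nu$ for the outward normal of $\Omega$ along $\Gamma_C$, one obtains $\partial_{\nu_\Omega} u_0 = \gamma u_0$ on $\Gamma_C$, while $u_0 = 0$ on $\Gamma_D$. Applying Green's first identity to $u_0$ on $\Omega$ then yields the energy identity $\int_\Omega |\grad u_0|^2 \dd x = \int_{\Gamma_C} \gamma |u_0|^2 \dd s$.

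The heart of the argument, and the place where the hypothesis on $\gamma$ enters, is to force $u_0 \equiv 0$ in $\Omega$ from this identity. When $\gamma_{\mathrm{max}}$ is small I would bound the right-hand side using the trace inequality together with the Poincar\'e inequality valid for functions vanishing on $\Gamma_D$, obtaining $\int_{\Gamma_C}\gamma|u_0|^2 \dd s \le \gamma_{\mathrm{max}} C_\Omega \int_\Omega |\grad u_0|^2 \dd x$ for a geometric constant $C_\Omega$; once $\gamma_{\mathrm{max}} C_\Omega < 1$ the identity forces $\grad u_0 = 0$, and $u_0 = 0$ on $\Gamma_D$ then gives $u_0 \equiv 0$. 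The regime of large $\gamma_{\mathrm{min}}$ is the main obstacle. Here the identity merely says that $1$ is an eigenvalue of the weighted mixed Steklov problem $\partial_{\nu_\Omega}\phi = \gamma\phi$ on $\Gamma_C$, $\phi = 0$ on $\Gamma_D$, and a naive energy bound no longer closes, since the Steklov spectrum is unbounded and large $\gamma$ a priori admits boundary-concentrated quasi-modes. I expect to rule these out by invoking the extra \emph{global} constraint that $u_0$ is harmonic on all of $D$ with $u_0 = 0$ on $\Gamma_D \subset \partial D$, which prevents the $\Gamma_C$-trace from concentrating; concretely, through a normalized contradiction argument in which, as $\gamma_{\mathrm{min}} \to \infty$, the bound $\|u_0\|_{L^2(\Gamma_C)}^2 \le \gamma_{\mathrm{min}}^{-1}\|\grad u_0\|_{L^2(\Omega)}^2$ drives a weak $H^1$-limit to a harmonic function vanishing on all of $\partial\Omega$, hence to zero, contradicting the normalization.

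Finally, once $u_0 \equiv 0$ in $\Omega$, unique continuation for harmonic functions (Holmgren's theorem, exactly as used for the operator $G$ in Theorem~\ref{opG2}) propagates the vanishing from the open set $\Omega$ to the connected domain $D$, so $u_0 \equiv 0$ in $D$ and therefore $g = \dnu{u_0}|_{\Gamma_N} = 0$, giving injectivity. The delicate point is the large-$\gamma_{\mathrm{min}}$ case of the third step; the reduction $u = u_0$, the small-$\gamma_{\mathrm{max}}$ estimate, and the concluding unique continuation are all routine.
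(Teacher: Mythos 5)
Your reduction to $u=u_0$ in $\domega$, the transfer of the Robin condition to $u_0$ on $\Omega$ with the outward normal, the energy identity $\int_\Omega|\grad u_0|^2\dd x=\int_{\Gamma_C}\gamma|u_0|^2\dd s$, the small-$\gamma_{\mathrm{max}}$ estimate via trace plus Poincar\'e, and the concluding unique continuation to all of $D$ are exactly the paper's proof. The only divergence is the large-$\gamma_{\mathrm{min}}$ branch, which the paper disposes of in one sentence (``the other case can be proven similarly by considering the opposite sign of the above equality''); you are right to be uneasy about that, since the sign-flipped estimate would require a reverse trace inequality of the form $\|\grad u_0\|^2_{L^2(\Omega)}\leq C\|u_0\|^2_{L^2(\Gamma_C)}$, i.e.\ control of the $H^{1/2}(\Gamma_C)$ norm of the trace by its $L^2(\Gamma_C)$ norm, which fails for the class of harmonic extensions.

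However, the compactness argument you propose in its place does not close as sketched. Normalizing $\|u_0^{(n)}\|_{H^1(\Omega)}=1$, the bound $\|u_0^{(n)}\|^2_{L^2(\Gamma_C)}\leq\gamma_{\mathrm{min},n}^{-1}\|\grad u_0^{(n)}\|^2_{L^2(\Omega)}\to 0$ together with the compact trace embedding does force the weak $H^1(\Omega)$ limit to be a harmonic function vanishing on all of $\partial\Omega$, hence zero. But a weak limit equal to zero is not a contradiction with the normalization: the Dirichlet energy can concentrate near $\Gamma_C$ while the $L^2(\Omega)$ and $L^2(\partial\Omega)$ traces vanish in the limit --- this is precisely the Steklov quasimode scenario you name, and the mixed Steklov--Dirichlet spectrum of $\Omega$ is unbounded, so such concentrating sequences genuinely exist. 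The energy identity only tells you $\|\grad u_0^{(n)}\|^2_{L^2(\Omega)}=\int_{\Gamma_C}\gamma_n|u_0^{(n)}|^2\dd s\leq 1$; it does not send the gradient norm to zero. The ``extra global constraint'' that $u_0$ is harmonic in all of $D$ is announced as the ingredient that rules out concentration, but it is never actually used: your limit is taken only in $H^1(\Omega)$ and the conclusion is drawn only from data on $\partial\Omega$. To make this branch rigorous you would need either to exploit quantitatively that $u_0$ extends harmonically across $\Gamma_C$ into $\domega$ with controlled Cauchy data there (which is where boundary concentration should fail), or to upgrade the weak convergence to strong $H^1$ convergence; neither step is supplied. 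Everything else in the proposal is correct and identical to the paper's argument.
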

\begin{proof}
We begin by assuming $g$ is in the null-space of $(L-L_0)$ which implies that 
$$\Delta(u-u_0)= 0  \text{ in } \domega, \quad \dnu{(u-u_0)} = 0 \,  \text{ on } \,  \Gamma_N, \,  \text{ and } \,(u- u_0) = 0 \,  \text{ on } \,  \Gamma_C.$$
It is clear that the above boundary value problem only admits the trivial solution. Therefore, we have that $u=u_0$ in $\domega$ and hence $\partial_{\nu}{u_0}+\gamma u_0=0$ on $\Gamma_C$. Notice, that by \eqref{dpu0} we have that $u_0\in H^1(\Omega)$ is the solution of the boundary value problem
$$\Delta u_0 = 0 \text{ in } \Omega  \quad \text{with} \quad u_0 = 0 \quad  \text{on} \quad \Gamma_D, \quad \dnu{u_0} + \gamma u_0 = 0 \text{ on } \Gamma_C.$$ Recall, that $\nu$ is the inward unit normal to $\Omega$ (see Figure \ref{domain}). From Green's second identity applied to $u_0$ in $\Omega$ and the Trace Theorem, we have that
\begin{align*}
0 &= \int_\Omega |\nabla u_0|^2 \dd x - \int_{\Gamma_C} \gamma |u_0|^2 \dd s \\
& \geq \|\nabla u_0\|^2_{L^2(\Omega)} - \gamma_{\text{max}} \|u_0\|^2_{L^2(\Gamma_C)} \\
& \geq \|\nabla u_0\|^2_{L^2(\Omega)} - \gamma_{\text{max}} C \|u_0\|^2_{H^1(\Omega)},                                                                                                                                                                                                                                                                                                                                                                                                                                                                        
\end{align*}
since $\gamma \leq \gamma_{\text{max}}$ a.e. on $\Gamma_C$ from our assumptions. Notice, that since $u_0|_{\Gamma_D}=0$ we have the  Poincar\'{e} estimate $\|u_0\|_{H^1(\Omega)} \leq C\| \nabla u_0\|_{{L^2(\Omega)}}$. Therefore,
\begin{align*}
0 \geq (1-\gamma_{\text{max}} C) \|\nabla u_0\|^2_{{L^2(\Omega)}}
\end{align*}
which implies that if $\gamma_{\text{max}}$ is small enough, then $|\nabla u_0| = 0$ in $\Omega$ and hence $u_0 = 0$ in $\Omega$ due to the zero trace on $\Gamma_D$. By the unique continuation principle (see for e.g. \cite{Colto2013}, p. 276), we obtain that $u_0 = 0$ in $D$, which implies that $g=0$ by the Trace Theorem. The other case can be proven similarly by considering the opposite sign of the above equality.   
\end{proof}

With this, we wish to prove that $(L-L_0)$ is compact with a dense range just as we did for the operator $G$. Note that the compactness is not obvious as in the previous case and to prove the density of the range we need to compute the adjoint operator $(L-L_0)^*$. To this end, let us consider the solution $p\in H^1(\domega)$ to 
\begin{align}\label{auxp}
\Delta p = 0 \,  \text{ in } \,\domega \quad  \text{with} \quad  \dnu{p}  = 0 \,  \text{ on } \, \Gamma_N , \quad  \dnu{p} + \gamma p = \xi \,  \text{ on } \, \Gamma_C
\end{align}
and the solution $q \in H^1(D)$ to 
\begin{align}\label{auxq}
\Delta q = 0 \text{ in } D\setminus \Gamma_C\quad  \text{with}\quad  \dnu{q}|_{\Gamma_N} = 0, \quad q|_{\Gamma_D}=0,\quad  \text{and} \quad [\![\dnu q]\!] |_{\Gamma_C} =  \xi
\end{align}
for any $\xi \in \widetilde{H}^{-1/2}(\Gamma_C)$. Here, we define the notation 
$$ [\![\dnu q]\!]  |_{\Gamma_C} = (  \partial_{\nu}{q^+} -  \partial_{\nu}{q^-}) |_{\Gamma_C},$$ 
where $+$ and $-$ indicate the limit obtained by approaching the boundary $\Gamma_C$ from $\domega$ and $\Omega$, respectively.  Note that since $q\in H^1(D)$ it has continuous trace across $\Gamma_C$. With this we can further analyze the operator $(L-L_0 )$.

\begin{theorem} \label{opL2}
The adjoint $(L-L_0)^*: \widetilde{H}^{-1/2}(\Gamma_C) \to  L^2(\Gamma_N) $ is given by 
$$(L-L_0)^* \xi  = (p-q)|_{\Gamma_N},$$
where $p $ and $q$ are the solution to \eqref{auxp} and \eqref{auxq}, respectively. Moreover, the operator $(L-L_0)$ is compact with a dense range provided that $\gamma_{\mathrm{max}}$ is sufficiently small or $\gamma_{\mathrm{min}}$ is sufficiently large.
\end{theorem}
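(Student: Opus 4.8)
The plan is to prove the two assertions separately, throughout using the defining relation $\langle (L-L_0)g,\xi\rangle_{\Gamma_C} = \big(g,(L-L_0)^*\xi\big)_{L^2(\Gamma_N)}$ together with the identity $(L-L_0)g = (u-u_0)|_{\Gamma_C}$. First I would record that \eqref{auxp} and \eqref{auxq} are well-posed for every $\xi\in\widetilde{H}^{-1/2}(\Gamma_C)$: the Robin problem for $p$ is coercive by the same Poincar\'e/Lax--Milgram argument used in Section~\ref{sect-dp}, while the transmission problem for $q$ is treated through its weak form $\int_D \nabla q\cdot\nabla v\,\dd x = \langle \xi, v|_{\Gamma_C}\rangle_{\Gamma_C}$ posed over $H^1(D)$-functions vanishing on $\Gamma_D$, which is coercive thanks to the Poincar\'e estimate coming from the zero trace on $\Gamma_D$. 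These give the bound $\|p\|_{H^1(\domega)}+\|q\|_{H^1(D)} \le C\|\xi\|_{\widetilde{H}^{-1/2}(\Gamma_C)}$, which I will reuse for compactness.

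For the adjoint formula I would split the pairing as $\langle u|_{\Gamma_C},\xi\rangle_{\Gamma_C} - \langle u_0|_{\Gamma_C},\xi\rangle_{\Gamma_C}$ and handle each by Green's second identity, exactly as in the proof of Theorem~\ref{opG2}. Applying it to the harmonic pair $u,p$ on $\domega$ and substituting $\dnu u = g$, $\dnu p = 0$ on $\Gamma_N$ and $\dnu u = -\gamma u$, $\dnu p = \xi-\gamma p$ on $\Gamma_C$, the $\Gamma_C$-terms collapse to $-\int_{\Gamma_C} u\xi\,\dd s$, yielding $\langle u|_{\Gamma_C},\xi\rangle_{\Gamma_C} = (g,p|_{\Gamma_N})_{L^2(\Gamma_N)}$. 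Since $q$ has a jump in its normal derivative across $\Gamma_C$, for the $u_0$ term I would apply Green's identity separately on $\domega$ and on $\Omega$: the $\Gamma_D$ contribution vanishes because $u_0=q=0$ there, the interior traces $q\,\dnu u_0$ cancel between the two pieces (as $u_0$ and $q$ are continuous across $\Gamma_C$), and the surviving terms combine via $[\![\dnu q]\!]|_{\Gamma_C}=\xi$ into $\langle u_0|_{\Gamma_C},\xi\rangle_{\Gamma_C} = (g,q|_{\Gamma_N})_{L^2(\Gamma_N)}$. Subtracting gives $(L-L_0)^*\xi = (p-q)|_{\Gamma_N}$.

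Compactness I would deduce from this formula rather than from the direct definition, since the target $H^{1/2}(\Gamma_C)$ of $(L-L_0)$ admits no useful compact embedding. The well-posedness bound together with the Trace Theorem shows that $(L-L_0)^*$ maps $\widetilde{H}^{-1/2}(\Gamma_C)$ boundedly into $H^{1/2}(\Gamma_N)$; composing with the compact embedding $H^{1/2}(\Gamma_N)\hookrightarrow L^2(\Gamma_N)$ renders $(L-L_0)^*$, and hence $(L-L_0)$, compact, with no restriction on $\gamma$. Density of the range is then equivalent to injectivity of $(L-L_0)^*$ (as for $G$ in Theorem~\ref{opG2}), and this is where the hypothesis on $\gamma$ is used. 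Assuming $(p-q)|_{\Gamma_N}=0$, the function $p-q$ is harmonic on $\domega$ with zero Cauchy data on $\Gamma_N$, so Holmgren's theorem gives $p=q$ on $\domega$. Matching the two expressions for $\xi$ on $\Gamma_C$, namely $\xi=\dnu p+\gamma p=\dnu q^+ +\gamma q^+$ from \eqref{auxp} and $\xi=\dnu q^+-\dnu q^-$ from \eqref{auxq}, with continuity $q^+=q^-$, I would obtain the homogeneous condition $\dnu q^- + \gamma q^- = 0$ on $\Gamma_C$; thus $q|_\Omega$ solves precisely the problem treated in Theorem~\ref{opL1}, and the same energy/Poincar\'e argument under ``$\gamma_{\mathrm{max}}$ small or $\gamma_{\mathrm{min}}$ large'' forces $q=0$ in $\Omega$. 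Then $p|_{\Gamma_C}=q^+|_{\Gamma_C}=0$, and an energy identity for $p$ on $\domega$ (zero Neumann data on $\Gamma_N$, zero trace on $\Gamma_C$) forces $p\equiv0$, so $\dnu q^+=\dnu p|_{\Gamma_C}=0$ and finally $\xi=\dnu q^+-\dnu q^-=0$.

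The step I expect to be the main obstacle is the bookkeeping of the normal orientation and the $\pm$ traces across $\Gamma_C$: both in the two-piece Green's identity for $q$ (where the interior $q\,\dnu u_0$ terms must genuinely cancel so that only the jump $\xi$ survives) and in the reduction to Theorem~\ref{opL1}. Getting the sign of $\dnu q^-$ correct is exactly what makes the reduced boundary value problem on $\Omega$ the homogeneous corrosion problem, and hence lets the small-$\gamma_{\mathrm{max}}$ (respectively large-$\gamma_{\mathrm{min}}$) estimate close the injectivity argument.
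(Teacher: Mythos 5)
Your proposal is correct and follows essentially the same route as the paper: Green's second identity applied to $u,p$ on $\domega$ and to $u_0,q$ piecewise on $\Omega$ and $\domega$ to identify $L^*\xi=p|_{\Gamma_N}$ and $L_0^*\xi=q|_{\Gamma_N}$, compactness via the embedding $H^{1/2}(\Gamma_N)\hookrightarrow L^2(\Gamma_N)$ applied to the adjoint, and injectivity of $(L-L_0)^*$ by Holmgren plus the reduction of $q|_{\Omega}$ to the homogeneous Robin problem of Theorem~\ref{opL1}. Your final step, deducing $p\equiv 0$ from $p|_{\Gamma_C}=0$ and hence $\xi=0$, makes explicit a point the paper leaves implicit, but the argument is the same.
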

\begin{proof}
To prove the claim, we first compute the adjoints $L^*$ and $L^*_0$ separately. We begin with computing $L^*$. Just as in Theorem \ref{opG2} we use Green's second identity to obtain that 
$$ 0 =  \int_{\Gamma_N} {p} \, \partial_{\nu}u - u \, \partial_{\nu} {p} \, \text{d}s + \int_{\Gamma_C} {p} \, \partial_{\nu}u - u \, \partial_{\nu} {p} \, \text{d}s,$$
where we have used the fact that the functions are both harmonic. By the boundary conditions in \eqref{dp1} and \eqref{auxp} we have that 
$$ \big( g , L^*\xi \big)_{L^2(\Gamma_N)} =\int_{\Gamma_N} {p} {g} \, \text{d}s =  \int_{\Gamma_C} {u} \, {[ \partial_{\nu} {p}  +\gamma {p}]} \, \text{d}s = \int_{\Gamma_C} {u}{\xi} \dd s =\big\langle L g , \xi \big\rangle_{\Gamma_C},$$ 
which gives $L^*\xi  = p|_{\Gamma_N}$. 

Now, for computing $L^*_0$ we proceed is a similar manner where we apply Green's second identity in $\Omega$ and $\domega$ to obtain that
\begin{align*}
0 = - \int_{\Gamma_C} {u_0} \partial_{\nu}{q^-} - \partial_{\nu}{{u_0}}q \dd s \quad \text{and}\quad 0= - \int_{\Gamma_N} \partial_{\nu}{{u_0}} q \dd s + \int_{\Gamma_C} {u_0}  \partial_{\nu}{q^+} -  \partial_{\nu}{{u_0}} q \dd s,
\end{align*}
where we have used that the functions are harmonic as well as $\partial_{\nu} q =0$ on $\Gamma_N$ and $q = u_0=0$ on $\Gamma_D$. By adding the above equations and using the boundary conditions in \eqref{dpu0} and \eqref{auxq} we obtain that
\begin{align*}
\big\langle \xi, L_0 g \big\rangle_{\Gamma_N}= \int_{\Gamma_C} {u_0} \xi \dd s = \int_{\Gamma_N} {g} q \dd s = (L_0^*\xi , g)_{L^2(\Gamma_N)},
\end{align*}
which gives $L_0^*\xi  = q|_{\Gamma_N}$.

With this, it is clear that $(L-L_0)^*$ is compact by the compact embedding of $H^{1/2}(\Gamma_N)$ into $L^2(\Gamma_N)$ which implies that $(L-L_0)$ is compact. Now, let $\xi$ be in the null-space of $(L-L_0)$ which gives that 
$$\Delta(p-q)= 0  \text{ in } \domega, \quad (p-q)=\dnu{(p-p_0)} = 0 \,  \text{ on } \,  \Gamma_N.$$
Therefore, by Holmgren's Theorem we have that $p=q$ in $\domega$. By the boundary conditions on $\Gamma_C$ 
$$ \partial_{\nu} {p}  +\gamma {p} = \xi = \partial_{\nu}{q^+} -  \partial_{\nu}{q^-} \quad \text{on} \quad \Gamma_C$$
which implies that 
$$\Delta q = 0 \text{ in } \Omega  \quad \text{with} \quad q = 0 \text{ on } \Gamma_D, \quad \dnu{q^{-}} + \gamma q = 0 \text{ on } \Gamma_C.$$
Here, we have used that  $\partial_{\nu} {p} =\partial_{\nu}{q^+}$ and $p=q$ on $\Gamma_C$.  Then, by arguing just as in Theorem \ref{opL1} we have that $q=0$ provided that $\gamma_{\text{max}}$ is small enough or $\gamma_{\mathrm{min}}$ is sufficiently large, which gives that $\xi=0$. 
\end{proof}

\subsection{The Linear Sampling Method}\label{LSMsect}
Now that we have the above results we can infer the analytical properties of the difference of the NtD operators $(\Lambda-\Lambda_0 )$. These properties of the operator are essential for applying the linear sampling method (LSM) for solving the inverse shape problem. This method has been used to solve many inverse shape problems (see for e.g. \cite{GLSM,eit-harris}). This method connects the unknown region to range of the data operator $(\Lambda-\Lambda_0 )$ via the solution to an ill-posed operator equation. To proceed, we will discuss the necessary analysis to show that the linear sampling method can be applied to this problem. From the analysis in the previous section, we have the following result for the difference of the NtD operators. 

\begin{theorem}\label{opNtD1}
The difference of the NtD operators $(\Lambda-\Lambda_0 ): L^2(\Gamma_N)  \to  L^2(\Gamma_N) $ given by \eqref{NtDop} has the factorization 
$$ (\Lambda-\Lambda_0) = G(L-L_0),$$ 
where $G$ and $(L-L_0)$ are defined in \eqref{opg} and \eqref{opl}, respectively. 
Moreover, the operator $(\Lambda-\Lambda_0 )$ is compact and injective with a dense range provided that $\gamma_{\mathrm{max}}$ is sufficiently small or $\gamma_{\mathrm{min}}$ is sufficiently large. 
\end{theorem}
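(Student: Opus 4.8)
The plan is to assemble the result entirely from the structural properties of $G$ and $(L-L_0)$ established in Theorems \ref{opG1}--\ref{opL2}, so that no new boundary value problem has to be analyzed. First I would confirm the factorization $(\Lambda-\Lambda_0)=G(L-L_0)$, which is already motivated in the text preceding the statement. Given $g\in L^2(\Gamma_N)$, let $u$ and $u_0$ solve \eqref{dp1} and \eqref{dpu0}. By definition $(L-L_0)g=(u-u_0)|_{\Gamma_C}$, and by the well-posedness of \eqref{dp2} the difference $w=u-u_0$ is precisely the solution of \eqref{dp2} with Dirichlet data $\varphi=(u-u_0)|_{\Gamma_C}$. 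Hence $G(L-L_0)g=w|_{\Gamma_N}=(u-u_0)|_{\Gamma_N}=(\Lambda-\Lambda_0)g$, which gives the factorization. I would emphasize that this step, together with compactness below, holds unconditionally and does not invoke the hypothesis on $\gamma$.

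With the factorization in hand, compactness and injectivity follow immediately. Since $G$ is compact (Theorem \ref{opG1}) and $(L-L_0)$ is bounded (indeed compact, Theorem \ref{opL2}), the composition $G(L-L_0)$ is compact. For injectivity, suppose $(\Lambda-\Lambda_0)g=0$, i.e.\ $G\big((L-L_0)g\big)=0$. The injectivity of $G$ (Theorem \ref{opG1}) forces $(L-L_0)g=0$, and the injectivity of $(L-L_0)$ (Theorem \ref{opL1}) then yields $g=0$. This is exactly the place where the assumption that $\gamma_{\mathrm{max}}$ is sufficiently small or $\gamma_{\mathrm{min}}$ is sufficiently large enters the argument.

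For the density of the range I would pass to adjoints. Taking the adjoint of the factorization gives $(\Lambda-\Lambda_0)^*=(L-L_0)^*G^*$, a map $L^2(\Gamma_N)\to L^2(\Gamma_N)$. Density of $\ran{\Lambda-\Lambda_0}$ is equivalent to injectivity of $(\Lambda-\Lambda_0)^*$. Theorem \ref{opG2} shows $G$ has dense range, so $G^*$ is injective; Theorem \ref{opL2} shows $(L-L_0)$ has dense range, so $(L-L_0)^*$ is injective. A composition of injective maps is injective: if $(L-L_0)^*\big(G^*\psi\big)=0$ then $G^*\psi=0$ and hence $\psi=0$. Thus $(\Lambda-\Lambda_0)^*$ is injective and the range is dense.

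Since every ingredient is already available, there is no genuinely hard step, and the proof is essentially bookkeeping. The only points requiring care are, first, verifying the factorization at the level of the actual solution $w=u-u_0$ rather than merely formally, using uniqueness in \eqref{dp2}; and second, being precise that density of the range is obtained through injectivity of the adjoint composition, tracking that the $\gamma$-hypothesis is what powers the injectivity and dense-range claims for $(L-L_0)$, while $G$ supplies its counterparts unconditionally.
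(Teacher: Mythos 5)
Your proposal is correct and matches the paper's intent exactly: the paper states this theorem without a written proof, asserting it follows "from the analysis in the previous section," and your assembly — factorization via uniqueness for \eqref{dp2}, compactness and injectivity by composing Theorems \ref{opG1} and \ref{opL1}, and dense range via injectivity of the adjoint composition $(L-L_0)^*G^*$ using Theorems \ref{opG2} and \ref{opL2} — is precisely that analysis. Your bookkeeping of where the hypothesis on $\gamma$ enters (injectivity and dense range of $(L-L_0)$, but not the factorization or compactness) is also accurate.
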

 
To proceed, we need to determine an associated function that depends on the sampling point $z \in D$ to derive a `range test' to reconstruct the unknown subregion $\Omega$. To this end, we define the mixed Green's function (also referred to as the Zaremba function \cite{Zaremba}, p. 
B209): for any $z \in D$, let $\mathbb{G}(\cdot, z) \in H_{loc}^1(D\setminus \{z\})$ be the solution to
\begin{align}\label{solG}
-\Delta \mathbb{G}(\cdot, z) = \delta(\cdot -z) \text{ in } D,  \quad \dnu{\mathbb{G}(\cdot, z)} = 0 \text{ on } \Gamma_N, \quad
  \text{ and } \quad \mathbb{G}(\cdot, z) = 0 \text{ on } \, {\Gamma_D}.
\end{align}
The following result shows that the range of the operator $G$ given by \eqref{opg} uniquely determines the region of interest $\Omega$.
\begin{theorem}\label{ranG}
Let $G: H^{1/2}(\Gamma_C) \to L^2(\Gamma_N)$ as defined in \eqref{opg}. Then,
\begin{align*}
\mathbb{G}(\cdot, z)|_{\Gamma_N} \in \mathrm{Range}(G) \quad  \text{if and only if} \quad z \in \Omega.
\end{align*}
\end{theorem}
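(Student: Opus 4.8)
The plan is to establish the two directions of the equivalence separately, proving the reverse implication directly from well-posedness and the forward implication by contraposition via a unique-continuation/regularity argument. For the reverse implication, suppose $z \in \Omega$. Since $\Omega$ is open with $\Omega \cap \domega = \emptyset$, the point $z$ lies at positive distance from $\overline{\domega}$, so the point-source singularity of $\mathbb{G}(\cdot,z)$ never enters $\domega$. Consequently $\mathbb{G}(\cdot,z)$ is harmonic (indeed smooth) in a neighborhood of $\overline{\domega}$, its restriction belongs to $H^1(\domega)$, and by \eqref{solG} it satisfies $\dnu \mathbb{G}(\cdot,z)=0$ on $\Gamma_N$. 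Setting $\varphi := \mathbb{G}(\cdot,z)|_{\Gamma_C} \in H^{1/2}(\Gamma_C)$, I observe that $\mathbb{G}(\cdot,z)|_{\domega}$ solves exactly the boundary value problem \eqref{dp2} with this datum; by uniqueness it is the solution $w$ defining $G\varphi$ in \eqref{opg}, whence $G\varphi = w|_{\Gamma_N} = \mathbb{G}(\cdot,z)|_{\Gamma_N}$. This places $\mathbb{G}(\cdot,z)|_{\Gamma_N}$ in $\mathrm{Range}(G)$.

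For the forward implication I argue by contraposition: assume $z \notin \Omega$, so that $z \in \domega$ (the boundary case $z\in\Gamma_C$ is treated separately below), and suppose for contradiction that $\mathbb{G}(\cdot,z)|_{\Gamma_N} = G\varphi$ for some $\varphi \in H^{1/2}(\Gamma_C)$, with corresponding solution $w$ of \eqref{dp2}. Consider $h := w - \mathbb{G}(\cdot,z)$. Both functions are harmonic on $\domega \setminus \{z\}$, and on $\Gamma_N$ they share the same Dirichlet trace (by assumption) and the same vanishing Neumann trace (by the boundary conditions in \eqref{dp2} and \eqref{solG}); hence $h = \dnu h = 0$ on $\Gamma_N$. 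Exactly as in the injectivity argument for $G$ preceding Theorem \ref{opG1}, Holmgren's theorem together with the real-analyticity of harmonic functions and the connectedness of $\domega \setminus \{z\}$ forces $h \equiv 0$, i.e. $w \equiv \mathbb{G}(\cdot,z)$ on $\domega \setminus \{z\}$.

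The contradiction --- and the crux of the argument --- is the clash of regularity at $z$. Since $w \in H^1(\domega)$ solves $\Delta w = 0$ across the point $z$, it is smooth there, whereas $\mathbb{G}(\cdot,z)$ carries a fundamental-solution singularity at $z$ and fails to be $H^1$ in any neighborhood of $z$ (in both $m=2,3$). The identity $w = \mathbb{G}(\cdot,z)$ near $z$ is therefore impossible, so no such $\varphi$ can exist and $\mathbb{G}(\cdot,z)|_{\Gamma_N} \notin \mathrm{Range}(G)$. I expect the main technical care to lie in this last step: justifying that the unique-continuation conclusion genuinely propagates up to a punctured neighborhood of $z$ (using that $\domega\setminus\{z\}$ remains connected for $m\geq 2$) and then converting the regularity mismatch into a rigorous contradiction. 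The residual case $z \in \Gamma_C$ can be excluded by noting that $\mathbb{G}(\cdot,z)$ then has its singularity on $\Gamma_C \subset \partial\domega$, so its Neumann trace on $\Gamma_N$ again cannot be reproduced by the $H^1(\domega)$-regular solution $w$; alternatively one restricts the range test to $z$ ranging over $D$ away from the measure-zero set $\Gamma_C$, which suffices for the sampling reconstruction.
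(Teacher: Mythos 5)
Your proposal is correct and follows essentially the same route as the paper: for $z\in\Omega$ you exhibit the preimage $\varphi_z=\mathbb{G}(\cdot,z)|_{\Gamma_C}$ directly, and for $z\in\domega$ you use Holmgren's theorem on the shared Cauchy data over $\Gamma_N$ to force $w\equiv\mathbb{G}(\cdot,z)$ on the punctured domain and then derive a contradiction from the regularity mismatch at $z$ (the paper phrases this via interior elliptic regularity and the pointwise blow-up of $\mathbb{G}$, while you invoke failure of $H^1$-integrability; both work). Your explicit handling of the residual case $z\in\Gamma_C$ is a small point of extra care that the paper's proof passes over.
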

\begin{proof}
To prove the claim, we first start with the case when the sampling point $z \in {\Omega}$. With this we see that $\mathbb{G}(\cdot, z) \in H^1(\domega)$ satisfies
$$\Delta \mathbb{G}(\cdot, z) =0 \text{ in } \domega,  \quad \dnu{\mathbb{G}(\cdot, z)} = 0 \text{ on } \Gamma_N, \quad
\text{ and } \quad \mathbb{G}(\cdot, z)|_{\Gamma_C} := \varphi_z \in H^{1/2}(\Gamma_C).$$
From this we obtain that $G\varphi_z = \mathbb{G}(\cdot, z)|_{\Gamma_N}$ proving this case.   

Now, we consider the case when $z \in \domega$ and we proceed by contradiction. To this end, we assume that there is a $w_z \in H^1(\domega)$ such that 
$$\Delta w_z =0 \text{ in } \domega,  \quad \dnu{w_z} = 0 \text{ on } \Gamma_N, \quad
\text{ and } \quad w_z = \varphi_z \text{ on } \Gamma_C$$
for some $\varphi_z \in H^{1/2}(\Gamma_C)$ where $w_z = \mathbb{G}(\cdot, z)$ on $\Gamma_N$. By appealing to Holmgren's Theorem we can obtain that $w_z = \mathbb{G}(\cdot, z)$ in the set $( \domega ) \setminus\{z\}$. Using interior elliptic regularity 
(see \cite{salsa} p. 536) we have that  $w_z$ is continuous at the sampling point $z$. By the singularity at $z$ for the mixed Green's function $\mathbb{G} ( \cdot ,z)$ we have that  
$$| w_z (x)| < \infty \quad \text{ whereas}  \quad | \mathbb{G} (x,z) | \rightarrow \infty \quad \text{as} \quad x \rightarrow z.$$
This proves the claim by contradiction. 
\end{proof}

With the result in Theorem \ref{ranG} we can prove that the linear sampling method can be used to recover $\Omega$ from the NtD mapping. This is useful in non-destructive testing since there is no initial guess needed for this algorithm. With this, we can now state the main result in this subsection. 
\begin{theorem}\label{lsm}
Let the difference of the NtD operators $(\Lambda-\Lambda_0 ): L^2(\Gamma_N)  \to  L^2(\Gamma_N) $ be given by \eqref{NtDop}. Then for any sequence $\big\{g_{z , \ep} \big\}_{\ep >0} \in L^2(\Gamma_N)$ for $z \in D$ satisfying 
$$\big\| (\Lambda - \Lambda_0) g_{z , \ep} - \mathbb{G}(\cdot, z)|_{\Gamma_N}  \big\|_{ L^2(\Gamma_N)} \longrightarrow 0 \quad \text{as } \ep \rightarrow 0$$
we have that $\| g_{z , \ep} \|_{ L^2(\Gamma_N)} \longrightarrow \infty$ as $\ep \rightarrow 0$ for all $z \notin \Omega$ provided that $\gamma_{\mathrm{max}}$ is sufficiently small or $\gamma_{\mathrm{min}}$ is sufficiently large.
\end{theorem}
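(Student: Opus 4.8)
The plan is to argue by contraposition, using the factorization $(\Lambda-\Lambda_0) = G(L-L_0)$ from Theorem \ref{opNtD1} together with the range characterization of Theorem \ref{ranG}. Fix a sampling point $z \in D$ with $z \notin \Omega$ and suppose, for contradiction, that the conclusion fails; that is, $\|g_{z,\ep}\|_{L^2(\Gamma_N)}$ does \emph{not} tend to infinity. Then there is a subsequence (not relabeled) along which the norms $\|g_{z,\ep}\|_{L^2(\Gamma_N)}$ stay bounded as $\ep \to 0$, while the defining convergence $(\Lambda-\Lambda_0) g_{z,\ep} \to \mathbb{G}(\cdot,z)|_{\Gamma_N}$ in $L^2(\Gamma_N)$ continues to hold. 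The goal is to show this forces $\mathbb{G}(\cdot,z)|_{\Gamma_N} \in \ran{G}$, which is incompatible with $z \notin \Omega$.

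First I would push the bounded sequence through the operator $(L-L_0)$. Setting $\psi_\ep := (L-L_0)g_{z,\ep}$, the boundedness of $(L-L_0): L^2(\Gamma_N) \to H^{1/2}(\Gamma_C)$ (Theorem \ref{opL2}) guarantees that $\{\psi_\ep\}$ is a bounded sequence in the Hilbert space $H^{1/2}(\Gamma_C)$. Since bounded sequences in a Hilbert space are weakly precompact, I can pass to a further subsequence along which $\psi_\ep \weakc \psi$ for some $\psi \in H^{1/2}(\Gamma_C)$.

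The crucial step is then to invoke the compactness of $G$ from Theorem \ref{opG1}. A compact operator is completely continuous, so it sends the weakly convergent sequence $\psi_\ep \weakc \psi$ to a strongly convergent one, namely $G\psi_\ep \to G\psi$ in $L^2(\Gamma_N)$. On the other hand, the factorization gives $G\psi_\ep = G(L-L_0)g_{z,\ep} = (\Lambda-\Lambda_0)g_{z,\ep}$, which by hypothesis converges strongly to $\mathbb{G}(\cdot,z)|_{\Gamma_N}$. Uniqueness of strong limits then forces $G\psi = \mathbb{G}(\cdot,z)|_{\Gamma_N}$, so that $\mathbb{G}(\cdot,z)|_{\Gamma_N} \in \ran{G}$. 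By Theorem \ref{ranG} this membership holds if and only if $z \in \Omega$, contradicting the choice $z \notin \Omega$. Hence no bounded subsequence can exist and $\|g_{z,\ep}\|_{L^2(\Gamma_N)} \to \infty$, as claimed.

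I expect the only delicate point to be the weak-to-strong conversion carried out by the compactness of $G$; everything else (boundedness of $(L-L_0)$, the factorization, and the range test) is supplied verbatim by the earlier theorems. The smallness/largeness hypothesis on $\gamma$ enters this argument only indirectly, through the injectivity and dense-range properties collected in Theorem \ref{opNtD1} that make the difference of NtD operators amenable to the sampling framework, so I would simply carry that assumption along rather than use it in an essential new way.
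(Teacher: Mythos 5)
Your proof is correct and follows essentially the same route as the paper: assume a bounded subsequence, extract a weak limit, use compactness to upgrade to strong convergence, conclude $\mathbb{G}(\cdot,z)|_{\Gamma_N}\in\ran{G}$, and contradict Theorem \ref{ranG}. The only (cosmetic) difference is that you take the weak limit of $(L-L_0)g_{z,\ep}$ in $H^{1/2}(\Gamma_C)$ and invoke compactness of $G$, whereas the paper takes the weak limit of $g_{z,\ep}$ in $L^2(\Gamma_N)$ and invokes compactness of $(\Lambda-\Lambda_0)$ itself.
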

\begin{proof}
To prove the claim, we first note that by Theorem \ref{opNtD1} we have that $(\Lambda-\Lambda_0 )$ has a dense range in $L^2(\Gamma_N)$. Therefore, for all $z \in D$ we have that there exists an approximating sequence $\big\{g_{z , \ep} \big\}_{\ep >0}$ such that $(\Lambda - \Lambda_0) g_{z , \ep}$ converges in norm to $\mathbb{G}(\cdot, z)|_{\Gamma_N}$. For a contradiction, assume that there is such an approximating sequence such that $\| g_{z , \ep} \|_{L^2(\Gamma_N)}$ is bounded as $\ep \rightarrow 0$. Then we can assume that (up to a subsequence) it is weakly convergent such that $g_{z , \ep} \weakc g_{z,0}$ as $\ep \to 0$. By the compactness of the operator $(\Lambda - \Lambda_0)$ we have that as $\ep \to 0$
$$ (\Lambda - \Lambda_0) g_{z , \ep}  \longrightarrow (\Lambda - \Lambda_0) g_{z,0} \quad \text{which implies that} \quad (\Lambda - \Lambda_0) g_{z , 0} = \mathbb{G}(\cdot, z)|_{\Gamma_N}.$$
By the factorization $(\Lambda-\Lambda_0) = G(L-L_0)$ this would imply that $\mathbb{G}(\cdot, z)|_{\Gamma_N} \in \mathrm{Range}(G)$. This clearly contradicts Theorem \ref{ranG} proving the claim by contradiction.
\end{proof}

Notice, we have shown that the linear sampling method can be used to recover $\Omega$ in Theorem \ref{lsm}. In order to use this result to recover the corroded part of the region we find an approximate solution to 
\begin{align}\label{volt-gap}
(\Lambda - \Lambda_0) g_z = \mathbb{G}(\cdot, z)|_{\Gamma_N}.
\end{align}
Since, the operator $(\Lambda - \Lambda_0)$ is compact this implies that the above equation is ill-posed. But the fact that $(\Lambda - \Lambda_0)$ has a dense range means we can construct an approximate solution using a regularization strategy (see for e.g. \cite{kirschipbook}). Here, we can take $\ep>0$ to be the regularization parameter then we can recover $\Omega$ by plotting the imaging functional 
$$W_{\mathrm{LSM}}(z) = 1/\| g_{z , \ep} \|_{L^2(\Gamma_N )}$$
 where $ g_{z , \ep}$ is the regularized solution to \eqref{volt-gap}. Theorem \ref{lsm} implies that $W_{\mathrm{LSM}}(z) \approx 0$  for any $z \notin \Omega$. Note that we can not infer that $W_{\mathrm{LSM}}(z)$ will be bounded below in $z \in \Omega$. Therefore, we will consider the factorization method in the proceeding section.

\subsection{The Factorization Method}\label{FMsect}
In this section, we will consider using the factorization method (FM) to recover the corroded region $\Omega$. Even though we have already studied the linear sampling method, we see that Theorem \ref{lsm} does not prove that the corresponding imaging functional is bounded below for $z \in \Omega$. With this in mind, we consider the factorization method since it gives an exact characterization of the region of interest $\Omega$ using the spectral decomposition of an operator associated to $(\Lambda - \Lambda_0)$. 

To begin, we need to derive a `symmetric' factorization of the operator $(\Lambda - \Lambda_0)$. Therefore, we recall that by Theorem \ref{opNtD1} we have that $(\Lambda - \Lambda_0)=G(L - L_0)$. Now, we define the bounded linear operator 
\begin{align}\label{opt}
T: \widetilde{H}^{-1/2}(\Gamma_C) \rightarrow H^{1/2}(\Gamma_C) \quad \text{given by} \quad T\xi = (p-q)|_{\Gamma_C}.
\end{align}
where $p$ and $q$ satisfy \eqref{auxp} and \eqref{auxq}, respectively. It is clear that the boundedness of $T$ follows from the well-posedness of \eqref{auxp} and \eqref{auxq} along with the Trace Theorem. With this, we notice that 
$$GT\xi = G\left((p-q)|_{\Gamma_C}\right) = (p-q)|_{\Gamma_N} = (L-L_0)^*\xi $$
where we have used the fact that $(p-q) \in H^1(\domega)$ satisfies
$$\Delta (p-q)= 0  \text{ in } \domega, \quad \dnu{(p-q)} = 0 \,  \text{ on } \, \Gamma_N, \,  \text{ and } \,(p-q)|_{\Gamma_C} \in H^{1/2}(\Gamma_C)$$
along with the definition of $G$ in \eqref{opg} and $(L-L_0)^*$ given in Theorem \ref{opL2}. Since this is true for any $\xi \in \widetilde{H}^{-1/2}(\Gamma_C)$ we have that $GT = (L-L_0)^*$. We now have the desired factorization of $(\Lambda - \Lambda_0)=G T^* G^*$ by the calculation that $(L-L_0) = T^* G^*$.

With this new factorization acquired we now prove that $(\Lambda - \Lambda_0)$  is self-adjoint. This would imply that $(\Lambda - \Lambda_0)=G T G^*$ where $G$ and $T$ are defined in \eqref{opg} and \eqref{opt}, respectively. To this end, we notice that  
$$\big(g_1, (\Lambda - \Lambda_0)g_2\big)_{L^2(\Gamma_N)} = \int_{\Gamma_N} g_1 {\left[{u^{(2)}}-{u^{(2)}_0}\right]} \dd s =  \int_{\Gamma_N} {u^{(2)}} \dnu{u^{(1)}}  - {u^{(2)}_0} \dnu{u^{(1)}_0}\dd s,$$
where the superscript ${(j)}$ corresponds to the solution of \eqref{dp1} and \eqref{dpu0} for $g_j \in L^2(\Gamma_N)$ with $j=1,2$. We now apply Green's first identity to $u^{(j)}$ and $u^{(j)}_0$ in $\domega$ and $D$, respectively to obtain
$$ \big(g_1, (\Lambda - \Lambda_0)g_2\big)_{L^2(\Gamma_N)}=   \int_{\domega} \nabla u^{(1)} \cdot \nabla { u^{(2)} } \dd x + \int_{\Gamma_C} \gamma u^{(1)} { u^{(2)} }  \dd s - \int_D \grad u^{(1)}_0 \cdot \grad { u^{(2)}_0 }\dd x, $$
where we have used the boundary conditions. This implies that $(\Lambda-\Lambda_0)=G T G^*$ is self-adjoint.

In order to apply the theory of the factorization method \cite{regfm1,kirschbook} we need to study the operator $T$ defined in \eqref{opt}. In particular, we wish to show that under some assumptions that $\pm T$ is coercive on the range of $G^*$. This can be achieved by showing that $\pm T$ is a coercive operator from $\widetilde{H}^{-1/2}(\Gamma_C)$ to $H^{1/2}(\Gamma_C)$. With this in mind, notice that by the boundary conditions on the corroded boundary $\Gamma_C$ we have that 
$$\big\langle T \xi , \xi \big\rangle_{\Gamma_C} = \int_{\Gamma_C} {(p-q)}{\xi} \dd s = \int_{\Gamma_C} p {[ \partial_{\nu} {p}  +\gamma {p}]} - q [\![\dnu q]\!] \dd s$$
and by appealing to Green's first identity 
\begin{align}\label{vart}
\big\langle T \xi , \xi \big\rangle_{\Gamma_C} = \int_{\domega} |\grad p|^2 \dd x + \int_{\Gamma_C}\gamma |p|^2 \dd s - \int_{D} |\grad q|^2 \dd x.
\end{align}
By \eqref{vart} we see that there is no way for $\pm T$ to be coercive without some extra assumptions because of the negative multiplying the $L^2(D)$--norm of the gradient of $q$. Therefore, to proceed we will consider two cases $0< \gamma <1$  or $1<\gamma$ a.e. on $\Gamma_C$.

For the first case when $0< \gamma <1$ a.e. on $\Gamma_C$ we have that 
\begin{align*}
\big\langle T \xi , \xi \big\rangle_{\Gamma_C} &\geq  \gamma_{\mathrm{min}} \left[ \|\grad p\|^2_{L^2(\domega)} + \|p\|_{L^2(\Gamma_C)}^2 \right] - \|\grad q\|_{L^2(D)}^2 \quad \text{since $0< \gamma <1$}   \\
								     &\geq \gamma_{\mathrm{min}} \left[ \|\grad p\|_{L^2(\domega)}^2 + \|p\|_{L^2(\Gamma_C)}^2 \right] - C\| \xi \|_{H^{-1/2}(\Gamma_C)}^2 \quad \text{by the well-posedness.}
\end{align*}
Now, notice that by the boundary condition in \eqref{auxp} we can estimate 
\begin{align*}
\| \xi \|_{H^{-1/2}(\Gamma_C)}&= \|  \partial_{\nu} {p}  +\gamma {p}\|_{H^{-1/2}(\Gamma_C)} \\
					     &\leq \|  \partial_{\nu} {p} \|_{H^{-1/2}(\Gamma_C)} + \|{p}\|_{H^{1/2}(\Gamma_C)} \quad \text{since $0< \gamma <1$} \\
					     &\leq C \|{p}\|_{H^{1}(\domega)} \quad \text{by Trace Theorems}\\
					     &\leq C \sqrt{ \|\grad p\|^2_{L^2(\domega)} + \|p\|_{L^2(\Gamma_C)}^2}
\end{align*}
where we have used that 
$$ \|{p}\|^2_{H^{1}(\domega)} \quad \text{is equivalent to } \quad  \|\grad p\|^2_{L^2(\domega)} + \|p\|_{L^2(\Gamma_C)}^2.$$
With this we see that $\exists \, C_j>0$ independent of $\gamma$ for $j=1,2$ where 
$$\big\langle T \xi , \xi \big\rangle_{\Gamma_C} \geq (C_1\gamma_{\mathrm{min}} -C_2 )\| \xi \|_{H^{-1/2}(\Gamma_C)}^2.$$
This implies that for $C_2/C_1 < \gamma <1$ a.e. on $\Gamma_C$ then we have that $T$ is coercive. 

Now, for the case $1<\gamma$ a.e. on $\Gamma_C$ we have that 
$$\int_{\domega} |\grad p|^2 \dd x + \int_{\Gamma_C}\gamma |p|^2 \dd s =\int_{\Gamma_C} \xi {p} \dd s$$
by Green's first identity. With this, by the Trace Theorem we have the estimate 
$$\|\grad p\|^2_{L^2(\domega)} + \|p\|_{L^2(\Gamma_C)}^2 \leq C \| \xi \|_{H^{-1/2}(\Gamma_C)}\|{p}\|_{H^{1}(\domega)} .$$
By the aforementioned norm equivalence, we further establish that
$$ \sqrt{ \|\grad p\|^2_{L^2(\domega)} + \|p\|_{L^2(\Gamma_C)}^2} \leq C \| \xi \|_{H^{-1/2}(\Gamma_C)}.$$
Using the boundary condition in \eqref{auxq} we have that
\begin{align*}
\| \xi \|_{H^{-1/2}(\Gamma_C)}&= \|  \partial_{\nu}{q^+} -  \partial_{\nu}{q^-}\|_{H^{-1/2}(\Gamma_C)} \\
					     &\leq C \left[ \|  q \|_{H^{1}(\domega)} + \|  q \|_{H^{1}(\Omega)}  \right] \quad \text{by Trace Theorem} \\
					     &\leq C \|\grad {q}\|_{L^2(D)} \quad \text{by the Poincar\'{e} estimate since $q|_{\Gamma_D} = 0$.}
\end{align*}
Therefore, by \eqref{vart} we have that $\exists \, C_j>0$ independent of $\gamma$ for $j=3,4$ where 
\begin{align*}
- \big\langle T \xi , \xi \big\rangle_{\Gamma_C}&\geq \|\grad {q}\|^2_{L^2(D)}  - \gamma_{\mathrm{max}} \left[ \|\grad p\|^2_{L^2(\domega)} + \|p\|_{L^2(\Gamma_C)}^2 \right]  \\
                                                                         &\geq (C_3 -C_4  \gamma_{\mathrm{max}})\| \xi \|_{H^{-1/2}(\Gamma_C)}^2.
\end{align*}
This implies that for $C_3/C_4 > \gamma >1$ a.e. on $\Gamma_C$ then we have that $-T$ is coercive. 

Even though we have proven the coercivity it is unclear if the assumption that 
$$C_2/C_1 < \gamma <1 \quad \text{ or } \quad C_3/C_4 > \gamma >1 \quad \text{a.e. on $\Gamma_C$} $$ 
is satisfied. This is due to the fact that the constants $C_j$ are unknown and depend on the geometry. In order to continue in our investigation, we make the assumption that there exists regions $\domega$ and $\Omega$ such that the above assumptions are valid for some given $\gamma$. With this we have the main result of this subsection. 

\begin{theorem}\label{fm}
Let the difference of the NtD operators $(\Lambda-\Lambda_0 ): L^2(\Gamma_N)  \to  L^2(\Gamma_N) $ be given by \eqref{NtDop}.  Provided that either $\pm T$ defined by \eqref{opt} is coercive, then 
$$\mathbb{G}(\cdot, z)|_{\Gamma_N} \in \mathrm{Range}\big( |\Lambda-\Lambda_0 |^{1/2} \big) \quad  \text{if and only if} \quad z \in \Omega.$$
\end{theorem}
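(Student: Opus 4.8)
The plan is to combine the symmetric factorization $(\Lambda-\Lambda_0) = G T G^*$ established just before the theorem with the abstract \emph{range identity} of the factorization method (see \cite{kirschbook,regfm1}). That abstract result states: if $A\colon L^2(\Gamma_N)\to L^2(\Gamma_N)$ is compact and self-adjoint, if $G\colon H^{1/2}(\Gamma_C)\to L^2(\Gamma_N)$ is compact with dense range, and if the middle operator $T$ in a factorization $A = GTG^*$ is self-adjoint with $\pm T$ coercive on $\overline{\mathrm{Range}(G^*)}$, then $\mathrm{Range}(|A|^{1/2}) = \mathrm{Range}(G)$. Once this operator identity is in hand with $A = \Lambda-\Lambda_0$, Theorem \ref{ranG} finishes the argument immediately, since it already characterizes $\mathbb{G}(\cdot,z)|_{\Gamma_N}\in\mathrm{Range}(G)$ as equivalent to $z\in\Omega$.

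First I would collect the hypotheses of the range identity that are already available. The operator $A:=\Lambda-\Lambda_0$ is compact and self-adjoint by Theorem \ref{opNtD1} together with the self-adjointness computation preceding the present theorem, and it admits the symmetric factorization $A = GTG^*$. The operator $G$ is compact, injective, and has dense range by Theorems \ref{opG1} and \ref{opG2}; consequently $G^*$ is injective with dense range. The coercivity of $\pm T$ as an operator from $\widetilde{H}^{-1/2}(\Gamma_C)$ into $H^{1/2}(\Gamma_C)$, hence a fortiori on $\overline{\mathrm{Range}(G^*)}$, is exactly the standing assumption of the theorem, justified by the two-case analysis of \eqref{vart} and the estimates following it.

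The one hypothesis not yet recorded explicitly is the self-adjointness of $T$, which I would dispatch using the self-adjointness of $A$. Taking adjoints in $A = GTG^*$ gives $A^* = G T^* G^*$, and since $A = A^*$ we obtain $G(T - T^*)G^* = 0$. Because $G$ is injective and $G^*$ has dense range, this forces $T = T^*$ as an operator from $\widetilde{H}^{-1/2}(\Gamma_C)$ into its dual space $H^{1/2}(\Gamma_C)$. With every hypothesis verified, the abstract range identity yields $\mathrm{Range}(|\Lambda-\Lambda_0|^{1/2}) = \mathrm{Range}(G)$, and combining this with Theorem \ref{ranG} proves the stated equivalence.

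The main obstacle is bookkeeping rather than a hard estimate. I must confirm that the Gelfand-triple pairings among $\widetilde{H}^{-1/2}(\Gamma_C)$, the pivot space $L^2(\Gamma_C)$, and $H^{1/2}(\Gamma_C)$ used in the factorization match the dual-pairing conventions demanded by the abstract theorem, so that the coercivity hypothesis is literally the inequality proved for $T$. I must also ensure the sign ambiguity, namely $+T$ coercive versus $-T$ coercive, is correctly absorbed by passing to $|A|^{1/2}$; the range identity is insensitive to the definite sign of $A$, so both cases deliver the same conclusion. A minor secondary point is that coercivity is only needed on $\overline{\mathrm{Range}(G^*)}$, which is weaker than the full coercivity actually established in \eqref{vart}, so no additional work is required there.
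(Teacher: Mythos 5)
Your proposal is correct and follows essentially the same route as the paper: invoke the symmetric factorization $(\Lambda-\Lambda_0)=GTG^*$, apply the abstract range identity of the factorization method (citing \cite{embry,regfm1}) to conclude $\mathrm{Range}\big(|\Lambda-\Lambda_0|^{1/2}\big)=\mathrm{Range}(G)$, and finish with Theorem \ref{ranG}. Your additional bookkeeping --- verifying the hypotheses of the range identity and deducing the self-adjointness of $T$ from $G(T-T^*)G^*=0$ using the injectivity of $G$ and the dense range of $G^*$ --- is a welcome elaboration of details the paper leaves implicit, but it is not a different argument.
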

\begin{proof}
This is due to the fact that with the factorization $(\Lambda - \Lambda_0)=G T G^*$ and provided that $\pm T$ is coercive we have that $\mathrm{Range}\big( |\Lambda-\Lambda_0 |^{1/2} \big) = \mathrm{Range}(G)$ by the result in \cite{embry,regfm1}. Here, we note that $|\Lambda-\Lambda_0 |^{1/2}$ is defined in the standard way by the spectral decomposition of a self-adjoint compact operator. Then by appealing to Theorem \ref{ranG} proves the claim. 
\end{proof}

With this result, we have another way to recover the corroded region $\Omega$. Notice that since $(\Lambda - \Lambda_0)$ is a self-adjoint compact operator Theorem \ref{fm} can be reformulated as 
$$ \sum\limits_{j=1}^{\infty} \frac{1}{\sigma_j} \big|\big( \mathbb{G}(\cdot, z) \, , \, g_j \big)_{L^2(\Gamma_N)}\big|^2 < \infty \quad  \text{if and only if} \quad z \in \Omega $$
by appealing to Picard's criteria (see for e.g. \cite{kirschipbook,kirschbook}) where $ (\sigma_j , g_j ) \in \R_{+} \times L^2(\Gamma_N)$ is the eigenvalue decomposition of the absolute value for the difference of the NtD operators. This result is stronger than Theorem \ref{lsm} since the result is an equivalence that implies that $\Lambda$ uniquely determines the subregion $\Omega$. Also, to numerically recover $\Omega$ we can use the imaging functional 
$$W_{FM}(z) = \left[ \sum\limits_{j=1}^{\infty} \frac{1}{\sigma_j} \big|\big( \mathbb{G}(\cdot, z) \, , \, g_j \big)_{L^2(\Gamma_N)}\big|^2 \right]^{-1}$$
which is positive only when $z \in \Omega$. Since $(\Lambda - \Lambda_0)$ is compact we have that the eigenvalues $\sigma_j$ tend to zero rapidly which can cause instability in using the imaging functional $W_{FM}(z)$. In \cite{regfm1,regfm2} it has been shown that adding a regularizer to the sum can regain stability while still given the unique reconstruction of $\Omega$.

\section{Inverse Impedance Problem}\label{sect-ip2}
In this section, we consider the inverse impedance problem, i.e. determine the corrosion parameter $\gamma$ on $\Gamma_C$ from the knowledge NtD mapping $\Lambda_\gamma$. Here, we will assume that the corroded boundary $\Gamma_C$ is known. This would be the case, if it was reconstructed as discussed in the previous section. We will prove that $\gamma \mapsto \Lambda_\gamma$ is injective as a mapping from $L^{\infty}(\Gamma_C)$ into $\mathscr{L}(L^2(\Gamma_N))$ i.e the set of bounded linear operator acting on $L^2(\Gamma_N)$. Then we will prove a Lipschitz--stability estimate for the inverse impedance problem. Similar result have been proven in \cite{EIT-finiteElectrode,eit-transmission1,JIIP,invrobin-Meftahi} just to name a few recent works. This will imply that one can reconstruct $\gamma$ on $\Gamma_C$ from the known Cauchy data $g$ and $\Lambda_\gamma g$ on $\Gamma_N$. In order to show the uniqueness, let us first consider the following density result associated with solutions to \eqref{dp1}.

\begin{lemma}\label{density}
Let 
$$\mathcal{U} = \left\{ u|_{\Gamma_C} \in L^2(\Gamma_C) : u \in H^1(\domega) \textrm{ solves } \eqref{dp1} \textrm{ for any  } g \in L^2(\Gamma_N)\right\}.$$
Then, $\mathcal{U}$ is dense subspace in $L^2(\Gamma_C)$.
\end{lemma}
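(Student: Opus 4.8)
The plan is to prove density by showing that the annihilator of $\mathcal{U}$ in $L^2(\Gamma_C)$ is trivial. First I would observe that, in the notation of \eqref{opl}, the set $\mathcal{U}$ is precisely $\ran{L}$ viewed inside $L^2(\Gamma_C)$, since $Lg = u|_{\Gamma_C}$ runs over exactly the traces appearing in the definition of $\mathcal{U}$ as $g$ ranges over $L^2(\Gamma_N)$. By a standard Hahn--Banach/orthogonal-complement argument, $\ran{L}$ is dense in $L^2(\Gamma_C)$ if and only if the only $\psi \in L^2(\Gamma_C)$ satisfying $\int_{\Gamma_C} u \psi \dd s = 0$ for every solution $u$ of \eqref{dp1} is $\psi = 0$. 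So the whole proof reduces to verifying this implication.

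The key step is to introduce the auxiliary problem \eqref{auxp} with $\xi = \psi$; this is legitimate because $L^2(\Gamma_C) \hookrightarrow \widetilde{H}^{-1/2}(\Gamma_C)$ (as $L^2$ is the pivot space for the dual pair $H^{1/2}/\widetilde{H}^{-1/2}$), so the problem for $p$ is well-posed. Applying Green's second identity to the harmonic functions $u$ and $p$ in $\domega$ and substituting the boundary conditions from \eqref{dp1} ($\dnu u = g$ on $\Gamma_N$, $\dnu u = -\gamma u$ on $\Gamma_C$) and from \eqref{auxp} ($\dnu p = 0$ on $\Gamma_N$, $\dnu p = \psi - \gamma p$ on $\Gamma_C$), the $\Gamma_C$ contributions combine so that the terms containing $\gamma u p$ cancel, leaving the identity
$$\int_{\Gamma_N} p\, g \dd s = \int_{\Gamma_C} u\, \psi \dd s.$$
Equivalently, this is just the statement that $L^{*}\psi = p|_{\Gamma_N}$, which was already established in the proof of Theorem \ref{opL2}. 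By the assumed orthogonality the right-hand side vanishes for all $g \in L^2(\Gamma_N)$, and since $g$ is arbitrary we conclude $p|_{\Gamma_N} = 0$.

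Finally I would close the argument using Holmgren's theorem exactly as elsewhere in the paper: $p$ is harmonic in $\domega$ with $p = \dnu p = 0$ on $\Gamma_N$, hence $p \equiv 0$ in $\domega$. Taking traces on $\Gamma_C$ then gives $p|_{\Gamma_C} = 0$ and $\dnu p|_{\Gamma_C} = 0$, so the Robin condition in \eqref{auxp} forces $\psi = \dnu p + \gamma p = 0$ on $\Gamma_C$, which is the desired conclusion. I expect the main technical obstacle to be purely a matter of rigor rather than of idea: namely, justifying the boundary integrals and the application of Green's identity when $g \in L^2(\Gamma_N)$ only, so that the normal traces live in negative-order Sobolev spaces and the products on $\Gamma_C$ must be read as the $H^{1/2}/\widetilde{H}^{-1/2}$ dual pairing (which coincides with the $L^2(\Gamma_C)$ inner product on the $L^2$ data $\psi$ because $L^2$ is the pivot space). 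Carrying out the computation through the variational formulations of \eqref{dp1} and \eqref{auxp}, rather than formally, removes this difficulty.
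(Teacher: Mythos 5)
Your argument is correct and is essentially the paper's own proof: the auxiliary function $p$ solving \eqref{auxp} with $\xi=\psi$ is exactly the paper's $v$, and the chain (orthogonality $\Rightarrow$ Green's second identity $\Rightarrow$ $p|_{\Gamma_N}=\dnu p|_{\Gamma_N}=0$ $\Rightarrow$ Holmgren $\Rightarrow$ $p\equiv 0$ $\Rightarrow$ $\psi=0$) matches the paper step for step. The only cosmetic difference is that you phrase the key identity as $L^*\psi=p|_{\Gamma_N}$ from Theorem \ref{opL2} rather than recomputing it.
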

\begin{proof}
It is enough to show that $\mathcal{U}^\perp$ is trivial. To this end, notice that for any $\phi \in \mathcal{U}^\perp$ there exists $v \in H^1(\domega)$ that is the unique solution of
$$\Delta v= 0 \quad  \text{in } D\setminus \overline{\Omega}, \quad  \dnu{v} = 0 \quad  \text{on } \Gamma_N, \, \text{ and } \, \dnu{v} +\gamma v  = \phi \quad  \text{on }\Gamma_C .$$
From the boundary conditions, we have that 
\begin{align*}
0 = \int_{\Gamma_C}u {\phi} \dd s = \int_{\Gamma_C} u{(\dnu{v} + \gamma  v)} \dd s = \int_{\Gamma_C} u\dnu{{v}} - {v} \dnu{u}  \dd s.
\end{align*}
Then, by appealing to Green's second identity in $\domega$ we obtain 
\begin{align*}
0&= - \int_{\Gamma_N} u\dnu{{v}} - {v} \dnu{u}   \dd s = \int_{\Gamma_N} g {v} \dd s, \quad \text{for any } g \in L^2(\Gamma_C)
\end{align*}
where we have used that both $u$ and $v$ are harmonic in $\domega$. Therefore, $v|_{\Gamma_N}=0$ and $\dnu{v}|_{\Gamma_N}=0$ from the boundary condition, so we conclude that $v$ vanishes in $\domega$ by Holmgren's theorem. Hence, $\phi=0$ on $\Gamma_C$ by the Trace Theorem.
\end{proof}

Now, we will show that the NtD operator $\Lambda$ uniquely determines the boundary coefficient $\gamma$ on  $\Gamma_C$. To this end, consider the solutions $u$ and $u_0$ to \eqref{dp1} and \eqref{dpu0}, respectively and let $\mathbb{G}(\cdot, z)$ be the mixed Green's function defined in \eqref{solG}. Then, the following lemma allows one to rewrite $(u-u_0)(z)$ for any $z \in \domega$ in terms of a boundary integral operator. 

\begin{lemma}\label{dens}
For any $z\in \domega$,
\begin{align}\label{u-u0}
 -(u-u_0)(z) = \int_{\Gamma_C} u(x) \big[\dnu{\mathbb{G}(x, z)}  +  \gamma(x) {\mathbb{G}(x, z)} \big] \dd s(x).
\end{align}
\end{lemma}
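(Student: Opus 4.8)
The plan is to apply Green's second identity to the functions $(u-u_0)$ and $\mathbb{G}(\cdot,z)$ over the domain $\domega$, where both are harmonic (away from the singularity $z$), and then carefully track the boundary contributions on $\Gamma_N$ and $\Gamma_C$ together with the contribution of the Dirac source at $z$. The key observation is that $(u-u_0)$ satisfies the homogeneous Neumann condition $\dnu{(u-u_0)} = 0$ on $\Gamma_N$ (since both $u$ and $u_0$ carry the same flux $g$ there), while $\mathbb{G}(\cdot,z)$ also satisfies $\dnu{\mathbb{G}(\cdot,z)} = 0$ on $\Gamma_N$ by \eqref{solG}. This should make the $\Gamma_N$ boundary integral vanish entirely, leaving only the $\Gamma_C$ contribution plus the pointwise value coming from the singularity.

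First I would excise a small ball $B_\delta(z)$ around the sampling point and apply Green's second identity on $\domega \setminus \overline{B_\delta(z)}$, where both functions are genuinely harmonic. The boundary of this region consists of $\Gamma_N$, $\Gamma_C$, and the sphere $\partial B_\delta(z)$. On $\Gamma_N$ both normal derivatives vanish and both functions are free, so that piece drops out. On $\partial B_\delta(z)$, the standard computation for the fundamental-solution-type singularity of $\mathbb{G}(\cdot,z)$ yields, as $\delta \to 0$, exactly the value $-(u-u_0)(z)$ (the sign and factor coming from $-\Delta \mathbb{G}(\cdot,z) = \delta(\cdot - z)$ and the regularity of $(u-u_0)$ near $z$). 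This is where the $-(u-u_0)(z)$ on the left-hand side of \eqref{u-u0} is produced.

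It remains to rewrite the $\Gamma_C$ integral in the asserted form. On $\Gamma_C$, Green's second identity leaves
$$\int_{\Gamma_C} \big[ (u-u_0)\,\dnu{\mathbb{G}(\cdot,z)} - \mathbb{G}(\cdot,z)\,\dnu{(u-u_0)} \big] \dd s.$$
To match \eqref{u-u0}, I would use the Robin condition $\dnu{u} + \gamma u = 0$ from \eqref{dp1} on $\Gamma_C$, so that $\dnu{u} = -\gamma u$ there, and I would use that $\mathbb{G}(\cdot,z) = 0$ on $\Gamma_D$ (which is the part of $\partial\Omega$ adjacent to $\Gamma_C$ but not $\Gamma_C$ itself). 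The function $u_0$ is defined on all of $D$ and is harmonic across $\Gamma_C$, so its jump contributions cancel; this lets the $u_0$ terms on $\Gamma_C$ be absorbed, effectively replacing $(u-u_0)$ by $u$ inside the remaining boundary integral. Combining the two surviving terms and substituting $\dnu{u} = -\gamma u$ should collapse the expression into $\int_{\Gamma_C} u \big[ \dnu{\mathbb{G}(\cdot,z)} + \gamma\,\mathbb{G}(\cdot,z)\big]\dd s$, as claimed.

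The main obstacle I expect is the careful bookkeeping of the $u_0$ contribution on $\Gamma_C$: since $u_0$ lives on the full domain $D$ and $\mathbb{G}(\cdot,z)$ is only characterized by its behavior relative to $\domega$ (with the Dirichlet condition on $\Gamma_D$), one must verify that applying Green's identity to $u_0$ and $\mathbb{G}(\cdot,z)$ over $D$ (or over $\Omega$) produces no net $\Gamma_C$ contribution, so that the $u_0$ terms genuinely drop out rather than surviving. Establishing this cleanly—likely by a separate application of Green's identity for $u_0$ and $\mathbb{G}(\cdot,z)$ using the Dirichlet condition $u_0 = \mathbb{G}(\cdot,z) = 0$ on $\Gamma_D$—is the delicate step; the rest is routine use of the boundary conditions and the singularity extraction.
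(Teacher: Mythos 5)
Your proposal is correct and follows essentially the same route as the paper: Green's second identity for $(u-u_0)$ and $\mathbb{G}(\cdot,z)$ on $\domega$ (with the singularity at $z$ producing $-(u-u_0)(z)$ and the matching Neumann data killing the $\Gamma_N$ term), followed by a second application of Green's identity to $u_0$ and $\mathbb{G}(\cdot,z)$ in $\Omega$ using their zero traces on $\Gamma_D$ to eliminate the $u_0$ contribution, and finally the Robin condition $\dnu u = -\gamma u$ on $\Gamma_C$. The step you flag as delicate is exactly the one the paper handles by that separate Green's identity in $\Omega$, so nothing is missing.
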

\begin{proof}
For any $z \in \domega$, from the boundary conditions and Green's second identity,
\begin{align*}
 -(u-u_0)(z)  &=\int_{\domega} (u-u_0) \Delta \mathbb{G}(\cdot , z)  - \mathbb{G}(\cdot , z)  \Delta(u-u_0)  \dd x \\
  &= \int_{\Gamma_C}  (u-u_0)\dnu{\mathbb{G}(\cdot , z)}- {\mathbb{G}(\cdot , z)} \dnu{(u-u_0)} \dd s \\
  &= \int_{\Gamma_C} u \dnu{\mathbb{G}(\cdot , z)}- {\mathbb{G}(\cdot , z)}  \dnu{u} \dd s - \int_{\Gamma_C} u_0 \dnu{\mathbb{G}(\cdot , z)}-  {\mathbb{G}(\cdot, z)} \dnu{u_0} \dd s.
\end{align*}
Applying Green's second identity to $u_0$ and $\mathbb{G}(\cdot , z)$ in $\Omega$,
\begin{align*}
- \int_{\Gamma_C} u_0\dnu{\mathbb{G}(\cdot , z)} &- {\mathbb{G}( \cdot , z)} \dnu{u_0}  \dd s= \int_{\Gamma_D} u_0\dnu{\mathbb{G}(\cdot , z)}- {\mathbb{G}( \cdot , z)} \dnu{u_0}\dd s = 0,
\end{align*}
where we have used the fact that  $u_0$ and $\mathbb{G}(\cdot , z)$ have zero trace on $\Gamma_D$ which completes the proof. 
\end{proof}

The result in Lemma \ref{dens} will now be used to prove that the NtD operator uniquely determines the corrosion coefficient $\gamma$. We would like to also note that the representation formula above can be used as an integral equation to solve for $\gamma$. Assuming that the Cauchy data for $u$ is known on $\Gamma_N$, we can recover the Cauchy data on $\Gamma_C$ numerically as in \cite{Data-completion}. Therefore, by restricting the representation formula in Lemma \ref{dens} onto $\Gamma_C$ (or $\Gamma_N$) gives an integral equation for the unknown coefficient. We now prove our uniqueness result.

\begin{theorem}
Assume that $\gamma \in L^{\infty}(\Gamma_C)$ and satisfies the inequality in Section \ref{sect-dp}. Then, the mapping $\gamma \mapsto \Lambda_\gamma$ from $L^{\infty}(\Gamma_C) \to \mathscr{L}(L^2(\Gamma_N))$ is injective.
\end{theorem}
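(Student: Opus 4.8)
The plan is to argue at the level of coefficients: I would take two admissible corrosion coefficients $\gamma_1,\gamma_2 \in L^{\infty}(\Gamma_C)$ (each satisfying the bounds of Section \ref{sect-dp}) with $\Lambda_{\gamma_1} = \Lambda_{\gamma_2}$, and show they must agree almost everywhere on $\Gamma_C$. First, fix an arbitrary current $g \in L^2(\Gamma_N)$ and let $u_1, u_2 \in H^1(\domega)$ be the corresponding solutions of \eqref{dp1} with coefficients $\gamma_1$ and $\gamma_2$. The difference $w = u_1 - u_2$ is harmonic in $\domega$ and satisfies $\dnu w = g - g = 0$ on $\Gamma_N$, since both solutions carry the same Neumann data, while $w|_{\Gamma_N} = \Lambda_{\gamma_1} g - \Lambda_{\gamma_2} g = 0$ by hypothesis. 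Thus $w$ has vanishing Cauchy data on the relatively open set $\Gamma_N$, and Holmgren's theorem (exactly as invoked throughout Section \ref{sect-ip1}) forces $w = 0$, i.e. $u_1 = u_2 =: u$ in $\domega$.

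Next I would exploit the two Robin conditions simultaneously satisfied by this common solution. On $\Gamma_C$ we have both $\dnu u + \gamma_1 u = 0$ and $\dnu u + \gamma_2 u = 0$, and subtracting these eliminates the normal derivative to give $(\gamma_1 - \gamma_2)\, u|_{\Gamma_C} = 0$ almost everywhere on $\Gamma_C$. Since $g \in L^2(\Gamma_N)$ was arbitrary, this identity holds for every trace $u|_{\Gamma_C}$ arising from a solution of \eqref{dp1} with coefficient $\gamma_1$. In other words, every element of the solution set $\mathcal{U}$ from Lemma \ref{density} vanishes almost everywhere on the set $E = \{x \in \Gamma_C : \gamma_1(x) \neq \gamma_2(x)\}$.

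The concluding step, which I expect to be the only delicate point, is to upgrade this from a statement about the family $\mathcal{U}$ to an almost-everywhere statement about $\gamma_1 - \gamma_2$. I would observe that the set of $v \in L^2(\Gamma_C)$ with $v = 0$ a.e.\ on $E$ is a closed subspace of $L^2(\Gamma_C)$ which, by the previous paragraph, contains $\mathcal{U}$. Lemma \ref{density} asserts that $\overline{\mathcal{U}} = L^2(\Gamma_C)$, so this closed subspace must be all of $L^2(\Gamma_C)$; in particular it contains the indicator $\chi_E$. But $\chi_E$ equals $1$ on $E$, which is impossible unless $|E| = 0$. Hence $\gamma_1 = \gamma_2$ a.e.\ on $\Gamma_C$, establishing injectivity of $\gamma \mapsto \Lambda_\gamma$. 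The genuine obstacle is really packaged inside Lemma \ref{density}: it is the unique-continuation (Holmgren) argument guaranteeing density of $\mathcal{U}$ that makes the single scalar identity $(\gamma_1-\gamma_2)\,u|_{\Gamma_C} = 0$ strong enough to pin the coefficient down pointwise, and once that density is in hand the remaining functional-analytic step is routine.
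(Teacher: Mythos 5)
Your argument is correct, and it reaches the conclusion by a genuinely different (and more economical) route than the paper. Both proofs open identically: vanishing Cauchy data on $\Gamma_N$ plus Holmgren give $u_{\gamma_1}=u_{\gamma_2}=:u$ in $\domega$ for every $g$. From there the paper subtracts the representation formula of Lemma \ref{dens} to obtain $\int_{\Gamma_C}(\gamma_1-\gamma_2)\,u\,\mathbb{G}(\cdot,z)\,\mathrm{d}s=0$, invokes the density Lemma \ref{density} to conclude $(\gamma_1-\gamma_2)\mathbb{G}(\cdot,z)=0$ a.e.\ on $\Gamma_C$ for every $z\in\domega$, and then derives a contradiction from the blow-up of the mixed Green's function as $z_n\to x^*\in\Gamma_C$. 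You instead subtract the two Robin conditions satisfied by the common solution, which eliminates $\partial_\nu u$ and yields the pointwise identity $(\gamma_1-\gamma_2)\,u|_{\Gamma_C}=0$ directly, and then use Lemma \ref{density} through the closed subspace $\{v\in L^2(\Gamma_C): v=0 \text{ a.e.\ on } E\}$, which cannot be all of $L^2(\Gamma_C)$ if $|E|>0$ (it misses $\chi_E$). What your approach buys is that it bypasses the Green's function entirely: you need neither Lemma \ref{dens} nor any analysis of the singularity of $\mathbb{G}$, and you also sidestep a minor delicacy in the paper's final step (the a.e.\ exceptional set there depends on $z$, so selecting a single $x^*$ valid for all $z$ implicitly requires a continuity/countable-density argument). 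The paper's route, on the other hand, produces the integral identity of Lemma \ref{dens} as a by-product, which the authors explicitly want to reuse as an integral equation for numerically recovering $\gamma$. As you correctly note, in both versions the real content is the density of $\mathcal{U}$ from Lemma \ref{density}; your concluding functional-analytic step is routine and sound.
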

\begin{proof}
To prove the claim, let ${\gamma_j}$ for $j=1,2$ be the corrosion coefficient in \eqref{dp1} such that $\Lambda_{\gamma_1} = \Lambda_{\gamma_2}$ for any $g \in L^2(\Gamma_N)$. Then the corresponding solutions 
$$u_{\gamma_1}=u_{\gamma_2} \quad \text{and} \quad \dnu{u_{\gamma_1}}=\dnu{u_{\gamma_2}} \quad \text{on } \,\, \Gamma_N, $$ 
which implies that $u_{\gamma_1}=u_{\gamma_2}$ in $\domega$ for any $g \in L^2(\Gamma_N)$ by Holmgren's Theorem. If we denote $u_{\gamma_1}=u_{\gamma_2}$ by $u$, then by subtracting \eqref{u-u0} we have that 
$$0 = \int_{\Gamma_C} (\gamma_1-\gamma_2)(x) u(x)  \mathbb{G}(x, z) \dd s(x) \quad \text{ for all $ z \in \domega$ and for any $g \in L^2(\Gamma_N)$. }$$
From Lemma \ref{density}, we obtain that $(\gamma_1-\gamma_2)(x) \mathbb{G}(x, z) = 0$ for a.e. $x \in \Gamma_C$ and for all $ z \in \domega$. Notice that by interior elliptic regularity for any $x \in D\setminus \{z\}$ the mixed Green's is continuous at $x$.

Now, by way of contradiction assume that $\| \gamma_1-\gamma_2 \|_{L^{\infty}(\Gamma_C)} \neq 0$. This would imply that there exists a subset $\Sigma \subset \Gamma_C$ with positive boundary measure such that $|\gamma_1-\gamma_2| >0$ on $\Sigma$. Therefore, for some $x^* \in \Sigma$ we hat that $\mathbb{G}(x^* , z) = 0 $ for all $ z \in \domega$. Then, we can take a sequence $z_n \in \domega$ such that $z_n \to x^*$ as $n \to \infty$. This gives a contradiction since 
$$\mathbb{G}(x^* , z_n ) = 0 \, \, \text{ for all $n \in \N$} \quad \text{ and } \quad | \mathbb{G} (x^* , z_n ) | \rightarrow \infty \quad \text{as} \quad n \rightarrow \infty.$$
This implies that $\| \gamma_1-\gamma_2 \|_{L^{\infty}(\Gamma_C)} = 0$, proving the claim. 
\end{proof}

Now that we have proven our uniqueness result we turn our attention to proving a stability estimate. We will prove a  Lipschitz--stability estimate using similar techniques in \cite{eit-transmission1}. This will employ a monotonicity estimate for the NtD operator $\Lambda_\gamma$ with respect to the corrosion parameter $\gamma$ as well as our density result in Lemma \ref{density}. With this in mind, we now present the monotonicity estimate. 

\begin{lemma}\label{monotonicity}
Let the NtD operators $\Lambda_{\gamma_j}: L^2(\Gamma_N)  \to  L^2(\Gamma_N)$ be given by \eqref{NtDop} with corrosion parameter $\gamma_j \in L^{\infty}(\Gamma_C)$ for $j=1,2$ and satisfies the inequality in Section \ref{sect-dp}. Then, 
$$\int_{\Gamma_C}(\gamma_1 - \gamma_2) |u_{\gamma_2}|^2 \dd s
\geq \int_{\Gamma_N} g {({\Lambda_{\gamma_2} }- {\Lambda_{\gamma_1} })g} \dd s.$$
\end{lemma}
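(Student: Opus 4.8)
The plan is to exploit the variational (energy) characterization of the NtD quadratic form. First I would test the variational formulation \eqref{uvarf} for $u_{\gamma_j}$ against the solution itself, $\varphi = u_{\gamma_j}$, which yields the identity
$$\int_{\Gamma_N} g \, \Lambda_{\gamma_j} g \dd s = \|\grad u_{\gamma_j}\|^2_{L^2(\domega)} + \int_{\Gamma_C} \gamma_j |u_{\gamma_j}|^2 \dd s.$$
Thus the boundary pairing on the right of the claim is nothing but the difference of two Dirichlet-type energies, and the task reduces to comparing these energies.

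Next I would introduce, for each $j$, the energy functional
$$J_j(v) = \int_{\domega} |\grad v|^2 \dd x + \int_{\Gamma_C}\gamma_j |v|^2 \dd s - 2\int_{\Gamma_N} g v \dd s,$$
which is strictly convex and coercive on $H^1(\domega)$ by the coercivity established in Section \ref{sect-dp}. Its unique minimizer is precisely $u_{\gamma_j}$, and, using the identity above, the minimum value is $J_j(u_{\gamma_j}) = -\int_{\Gamma_N} g\Lambda_{\gamma_j} g \dd s$. The crux is then a competitor argument: since $u_{\gamma_1}$ minimizes $J_1$ whereas $u_{\gamma_2}$ is merely an admissible test function, we have $J_1(u_{\gamma_1}) \leq J_1(u_{\gamma_2})$. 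Evaluating $J_1$ at $u_{\gamma_2}$ and subtracting off $J_2(u_{\gamma_2})$ cancels the gradient and source terms and leaves only the boundary contribution, so that
$$J_1(u_{\gamma_2}) = J_2(u_{\gamma_2}) + \int_{\Gamma_C}(\gamma_1-\gamma_2)|u_{\gamma_2}|^2 \dd s = -\int_{\Gamma_N} g\Lambda_{\gamma_2} g \dd s + \int_{\Gamma_C}(\gamma_1-\gamma_2)|u_{\gamma_2}|^2 \dd s.$$
Combining this with the minimality inequality $J_1(u_{\gamma_1}) \leq J_1(u_{\gamma_2})$ and rearranging produces exactly the stated estimate.

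The step demanding the most care is establishing the energy-minimization principle and, above all, keeping the sign conventions straight. To make the argument self-contained one can bypass the abstract minimization entirely and reason purely algebraically: writing $a_1(u,\varphi) = \int_{\domega}\grad u\cdot\grad\varphi + \int_{\Gamma_C}\gamma_1 u\varphi$ and using its symmetry together with the defining relations $a_j(u_{\gamma_j},\varphi) = \int_{\Gamma_N} g\varphi$, one obtains after expanding $a_1(u_{\gamma_1},u_{\gamma_1}) - a_1(u_{\gamma_2},u_{\gamma_2})$ the \emph{exact} identity
$$\int_{\Gamma_C}(\gamma_1-\gamma_2)|u_{\gamma_2}|^2\dd s = \int_{\Gamma_N} g(\Lambda_{\gamma_2}-\Lambda_{\gamma_1})g\dd s + a_1\big(u_{\gamma_1}-u_{\gamma_2},\,u_{\gamma_1}-u_{\gamma_2}\big).$$
The desired inequality then follows immediately, since the final term is nonnegative because $\gamma_1 \geq \gamma_{\mathrm{min}} > 0$ forces $a_1$ to be positive. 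I would favor presenting this second route, as it not only avoids the minimization machinery but also quantifies the gap in the monotonicity estimate by the energy $a_1(u_{\gamma_1}-u_{\gamma_2},u_{\gamma_1}-u_{\gamma_2})$ of the difference of the two states, which should prove convenient when this lemma is fed into the Lipschitz-stability argument that follows.
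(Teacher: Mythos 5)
Your proof is correct, and it is essentially the argument the paper is invoking: the paper omits the proof and defers to \cite{eit-transmission1}, where the monotonicity estimate is obtained by exactly this variational/quadratic-form expansion, testing $a_1(u_{\gamma_1}-u_{\gamma_2},u_{\gamma_1}-u_{\gamma_2})\ge 0$ against the defining relations $a_j(u_{\gamma_j},\varphi)=\int_{\Gamma_N}g\varphi\dd s$. Your ``exact identity'' with the nonnegative remainder $a_1(u_{\gamma_1}-u_{\gamma_2},u_{\gamma_1}-u_{\gamma_2})$ checks out and correctly yields the stated inequality.
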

\begin{proof} The proof is identical to what is done in \cite{eit-transmission1} so we omit the proof to avoid repetition. 
\end{proof}

With this we are ready to prove our Lipschitz--stability estimate. We will show that the inverse of the nonlinear mapping  $\gamma \mapsto \Lambda_\gamma$ is Lipschitz continuous from the set of bounded linear operators $\mathscr{L}(L^2(\Gamma_N))$ to a finite dimensional subspace of $L^{\infty}(\Gamma_C)$. To this end, we let $\mathcal{A}$ be a finite dimensional subspace of $L^{\infty}(\Gamma_C)$ and define the compact set 
$$\mathcal{A}_{[\gamma_{\text{min}},\gamma_{\text{max}}]} = \big\{ \gamma \in \mathcal{A} : \gamma_{\text{min}} \leq \gamma \leq \gamma_{\text{max}} \text{ on } \Gamma_C \big\}. $$
This would imply that inverse impedance problem has a unique solution that depends continuously on the NtD mapping. This fits nicely with the results from the previous section that assuming the factorization method is valid the inverse shape problem has a unique solution that depends continuously on the NtD mapping. 

\begin{theorem}\label{stable}
Let the NtD operators $\Lambda_{\gamma_j}: L^2(\Gamma_N)  \to  L^2(\Gamma_N)$ be given by \eqref{NtDop} with corrosion parameter $\gamma_j \in \mathcal{A}_{[\gamma_{\mathrm{min}},\gamma_{\mathrm{max}}]}$ for $j=1,2$ and satisfies the inequality in Section \ref{sect-dp}. Then, 
$$\|\gamma_1-\gamma_2\|_{L^\infty(\Gamma_C)} \leq C \| \Lambda_{\gamma_1} - \Lambda_{\gamma_2}\|_{\mathscr{L}(L^2(\Gamma_N))},$$
where $C>0$ is independent of $\gamma_j \in \mathcal{A}_{[\gamma_{\mathrm{min}},\gamma_{\mathrm{max}}]}$. 
\end{theorem}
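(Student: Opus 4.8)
The plan is to argue by contradiction, using a compactness argument that crucially exploits the finite dimensionality of $\mathcal{A}$, together with the monotonicity estimate of Lemma \ref{monotonicity} and the density result of Lemma \ref{density}. Suppose the claimed estimate fails for every $C>0$. Then I can produce sequences $\gamma_1^{(n)}, \gamma_2^{(n)} \in \mathcal{A}_{[\gamma_{\mathrm{min}},\gamma_{\mathrm{max}}]}$ with
\[
\|\gamma_1^{(n)} - \gamma_2^{(n)}\|_{L^\infty(\Gamma_C)} > n\,\|\Lambda_{\gamma_1^{(n)}} - \Lambda_{\gamma_2^{(n)}}\|_{\mathscr{L}(L^2(\Gamma_N))}.
\]
In particular $\gamma_1^{(n)} \neq \gamma_2^{(n)}$, so I set $\eta_n = \gamma_1^{(n)} - \gamma_2^{(n)}$ and normalize $\hat\eta_n = \eta_n / \|\eta_n\|_{L^\infty(\Gamma_C)}$. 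Since the $\gamma_j^{(n)}$ lie in the compact set $\mathcal{A}_{[\gamma_{\mathrm{min}},\gamma_{\mathrm{max}}]}$ and $\{\hat\eta_n\}$ lies on the unit sphere of the finite dimensional space $\mathcal{A}$ (whose closed unit ball is compact, all norms being equivalent), I pass to a subsequence along which $\gamma_j^{(n)} \to \gamma_j^*$ in $\mathcal{A}_{[\gamma_{\mathrm{min}},\gamma_{\mathrm{max}}]}$ and $\hat\eta_n \to \hat\eta$ in $L^\infty(\Gamma_C)$ with $\|\hat\eta\|_{L^\infty(\Gamma_C)} = 1$; in particular $\hat\eta \neq 0$.

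A preliminary fact I would establish by a routine coercivity estimate on the variational form \eqref{uvarf} is that the forward map is Lipschitz in the coefficient: for fixed $g$, $\|u_\gamma - u_{\gamma'}\|_{H^1(\domega)} \leq C\|\gamma-\gamma'\|_{L^\infty(\Gamma_C)}\|u_{\gamma'}\|_{H^1(\domega)}$, and consequently $\|\Lambda_\gamma - \Lambda_{\gamma'}\|_{\mathscr{L}(L^2(\Gamma_N))} \to 0$ as $\gamma \to \gamma'$ in $L^\infty(\Gamma_C)$. Since $\eta_n$ is bounded (by $\gamma_{\mathrm{max}}-\gamma_{\mathrm{min}}$), the displayed inequality forces $\|\Lambda_{\gamma_1^{(n)}} - \Lambda_{\gamma_2^{(n)}}\| < \tfrac{1}{n}\|\eta_n\|_{L^\infty} \to 0$. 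If $\gamma_1^* \neq \gamma_2^*$, then passing to the limit and using operator continuity gives $\Lambda_{\gamma_1^*} = \Lambda_{\gamma_2^*}$, which contradicts the injectivity of $\gamma \mapsto \Lambda_\gamma$ established in the uniqueness theorem above. Hence I may assume $\gamma_1^* = \gamma_2^* =: \gamma^*$.

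In this remaining case I invoke the symmetric form of Lemma \ref{monotonicity}: swapping the two indices and combining the two inequalities gives, for every $g \in L^2(\Gamma_N)$, the two-sided bound
\[
\int_{\Gamma_C} \eta_n\,|u_{\gamma_1^{(n)}}|^2 \dd s \;\leq\; \int_{\Gamma_N} g\,(\Lambda_{\gamma_2^{(n)}} - \Lambda_{\gamma_1^{(n)}})g \dd s \;\leq\; \int_{\Gamma_C} \eta_n\,|u_{\gamma_2^{(n)}}|^2 \dd s,
\]
where the middle term is bounded in modulus by $\|\Lambda_{\gamma_1^{(n)}} - \Lambda_{\gamma_2^{(n)}}\|\,\|g\|_{L^2(\Gamma_N)}^2$. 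Dividing by $\|\eta_n\|_{L^\infty}$ and using the defining inequality, the rescaled middle term is $O(1/n)$ and so vanishes in the limit. Using the Lipschitz continuity of the solution map ($u_{\gamma_j^{(n)}} \to u_{\gamma^*}$ in $L^2(\Gamma_C)$, whence $|u_{\gamma_j^{(n)}}|^2 \to |u_{\gamma^*}|^2$ in $L^1(\Gamma_C)$) together with $\hat\eta_n \to \hat\eta$ in $L^\infty(\Gamma_C)$, both outer terms converge to the same quantity, forcing
\[
\int_{\Gamma_C} \hat\eta\,|u_{\gamma^*}|^2 \dd s = 0 \qquad \text{for every } g \in L^2(\Gamma_N).
\]
Finally I exploit the linearity of \eqref{dp1} in $g$: if $u,v \in \mathcal{U}$ are solutions for $\gamma^*$, then so are $u\pm v$, so applying the identity to $u+v$ and $u-v$ and subtracting (polarization, using that all functions are real valued) yields $\int_{\Gamma_C} \hat\eta\, u v \dd s = 0$ for all $u,v \in \mathcal{U}$. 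Fixing $u$ and invoking the density of $\mathcal{U}$ in $L^2(\Gamma_C)$ from Lemma \ref{density} gives $\hat\eta\, u = 0$ a.e. for each $u \in \mathcal{U}$; choosing $u_k \to 1$ in $L^2(\Gamma_C)$ and using $\hat\eta \in L^\infty(\Gamma_C)$ then forces $\hat\eta = 0$ a.e., contradicting $\|\hat\eta\|_{L^\infty(\Gamma_C)} = 1$.

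I expect the main obstacle to be twofold. First, the \emph{Lipschitz} (rather than merely continuous or logarithmic) nature of the estimate hinges entirely on the finite dimensionality of $\mathcal{A}$: it is precisely what allows extraction of a \emph{strongly} convergent subsequence of the normalized differences $\hat\eta_n$ with nonzero limit, which fails on an infinite dimensional coefficient space. Second, the step from $\int_{\Gamma_C}\hat\eta\,|u_{\gamma^*}|^2 \dd s = 0$ to $\hat\eta = 0$ is the delicate analytic point; it does not follow from density alone and requires the polarization identity, the linearity of the forward problem, and the $L^\infty$-boundedness of $\hat\eta$ to upgrade ``$\hat\eta\,u=0$ for all $u \in \mathcal{U}$'' into ``$\hat\eta=0$.'' Verifying the two continuity statements for $\gamma \mapsto u_\gamma|_{\Gamma_C}$ and $\gamma \mapsto \Lambda_\gamma$ is routine via coercivity of $A(\cdot,\cdot)$ and I would relegate it to a short inline estimate.
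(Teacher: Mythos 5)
Your proof is correct, and while it rests on the same three pillars as the paper's argument -- compactness of $\mathcal{A}_{[\gamma_{\mathrm{min}},\gamma_{\mathrm{max}}]}$ coming from finite dimensionality, the two-sided monotonicity bound of Lemma \ref{monotonicity}, and the density of the trace space $\mathcal{U}$ from Lemma \ref{density} -- the decisive step is executed quite differently. The paper works non-sequentially: it normalizes $\zeta=(\gamma_1-\gamma_2)/\|\gamma_1-\gamma_2\|_{L^\infty}$, and for \emph{each fixed} $\zeta$ uses Lemma \ref{density} to construct a current $\hat g$ whose solution concentrates (in $L^2(\Gamma_C)$) on a set $\Sigma\subset\{|\zeta|\ge 1/2\}$, thereby producing a strictly positive value of the functional $\Psi$; it then invokes lower semicontinuity of $(\zeta,\kappa_1,\kappa_2)\mapsto\sup_{\|g\|=1}\Psi$ on the compact product set to get a uniform positive infimum. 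You instead pass to the limit first: after extracting $\hat\eta_n\to\hat\eta$ and $\gamma_j^{(n)}\to\gamma_j^*$, you dispose of the case $\gamma_1^*\neq\gamma_2^*$ by the uniqueness theorem (an ingredient the paper's proof does not need), and in the case $\gamma_1^*=\gamma_2^*$ you squeeze the monotonicity bounds to obtain $\int_{\Gamma_C}\hat\eta\,|u_{\gamma^*}^{(g)}|^2\,\mathrm{d}s=0$ for every $g$, then polarize to $\int_{\Gamma_C}\hat\eta\,uv\,\mathrm{d}s=0$ for all $u,v\in\mathcal{U}$ and apply density twice to force $\hat\eta=0$. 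What each route buys: the paper's concentration construction is quantitative and identifies the distinguishing currents explicitly, but it leans on a lower semicontinuity claim that is only attributed to the (omitted) proof of the monotonicity lemma; your version replaces that with the routine Lipschitz continuity of $\gamma\mapsto u_\gamma$ and $\gamma\mapsto\Lambda_\gamma$ (which you correctly flag as a coercivity estimate) and extracts the contradiction by polarization, which pairs more cleanly with Lemma \ref{density} at the cost of being purely qualitative. Both arguments fail identically, and for the same reason, once $\mathcal{A}$ is infinite dimensional.
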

\begin{proof}
To prove the claim, notice that from Lemma \ref{monotonicity}, we have that 
\begin{align*}
- \int_{\Gamma_N} g {({\Lambda_{\gamma_2} g}- {\Lambda_{\gamma_1} g})} \dd s \geq \int_{\Gamma_C}(\gamma_2 - \gamma_1) |u_{\gamma_2}|^2 \dd s
\end{align*}
and interchanging the roles of $\gamma_1$ and $\gamma_2$, we obtain
\begin{align*}
\int_{\Gamma_N} g {({\Lambda_{\gamma_2} g}- {\Lambda_{\gamma_1} g})} \dd s \geq \int_{\Gamma_C}(\gamma_1 - \gamma_2) |u_{\gamma_1}|^2 \dd s.
\end{align*}
Therefore, we have that 
\begin{align*}
&\|\Lambda_{\gamma_1} - \Lambda_{\gamma_2}\|_{\mathscr{L}(L^2(\Gamma_C))} \\
&\hspace{0.5in}= \sup_{\|g\|=1} \left| \int_{\Gamma_N} g {({\Lambda_{\gamma_2} }- {\Lambda_{\gamma_1} })g} \dd s \right|
= \sup_{\|g\|=1} \max \left\{\pm \int_{\Gamma_N} g {({\Lambda_{\gamma_2} }- {\Lambda_{\gamma_1} })g} \dd s \right\}\\
& \hspace{0.5in} \geq \sup_{\|g\|=1} \max \left\{\int_{\Gamma_C}(\gamma_1 - \gamma_2) |u_{\gamma_1}|^2 \dd s,  \int_{\Gamma_C}(\gamma_2 - \gamma_1) |u_{\gamma_2}|^2 \dd s \right\}.
\end{align*}
Notice that we have used the fact that $\Lambda_{\gamma_j}$ is self-adjoint. Here we let $\| \cdot \|$ denote the ${L^2(\Gamma_N)}$--norm. This implies that
\begin{align*}
&\frac{\|\Lambda_{\gamma_1} - \Lambda_{\gamma_2}\|_{\mathscr{L}(L^2(\Gamma_C))}}{\|\gamma_1- \gamma_2\|_{L^\infty(\Gamma_C)}} \\
&\hspace{0.5in}\geq \sup_{\|g\|=1} \max \left\{\int_{\Gamma_C}\frac{(\gamma_1 - \gamma_2)}{\|\gamma_1- \gamma_2\|_{L^\infty(\Gamma_C)}} |u_{\gamma_1}|^2 \dd s,  \int_{\Gamma_C}- \frac{(\gamma_1 - \gamma_2)}{\|\gamma_1- \gamma_2\|_{L^\infty(\Gamma_C)}} |u_{\gamma_2}|^2 \dd s \right\}.
\end{align*} 
Provided that $\gamma_1 \neq \gamma_2$, we now let 
$$ \zeta= \frac{(\gamma_1 - \gamma_2)}{\|\gamma_1- \gamma_2\|_{L^\infty(\Gamma_C)}}$$
and define $\Psi : L^2(\Gamma_N ) \rightarrow \R$  given by
$$ \Psi(g; \zeta , \gamma_1 , \gamma_2) =  \max \left\{\int_{\Gamma_C} \zeta |u_{\gamma_1}^{(g)}|^2 \dd s,  \int_{\Gamma_C}- \zeta |u_{\gamma_2}^{(g)}|^2 \dd s \right\}.$$ 
Then, to complete the proof, it suffices to show that 
$$ \inf_{\substack{\zeta \in \mathcal{C},\\ \kappa_1, \kappa_2 \in \mathcal{A}_{[a,b]}}} \sup_{\|g\|=1} \Psi(g; \zeta , \kappa_1 , \kappa_2)>0$$ 
where $\mathcal{C} = \big\{ \zeta \in \mathcal{A} : \|\zeta\|_{L^\infty(\Gamma_C)}=1\big\}$. Notice that since $\mathcal{A}_{[\gamma_{\text{min}},\gamma_{\text{max}}]}$ and $\mathcal{C}$ are finite dimensional, we have that they are compact sets. 

To this end, since we have that $\|\zeta\|_{L^{\infty}(\Gamma_C)}=1$, then there exists a subset $\Sigma \subset \Gamma_C$ with positive boundary measure such that for a.e. $x \in \Sigma$ either $\zeta(x) \geq 1/2$ or $-\zeta(x) \geq 1/2$. Without loss of generality assume that $\zeta(x) \geq 1/2$  a.e. for $x \in \Sigma$ and the other case can be handled in a similar way. From Lemma \ref{density}, there exists a sequence $\{g_n\}_{n=1}^\infty \in L^2(\Gamma_N)$ such that the corresponding solution $u_{\gamma_1}^{(g_n)}$ of \eqref{dp1} satisfies 
\begin{align*}
u_{\gamma_1}^{(g_n)} \rightarrow \frac{2 \chi_\Sigma}{\sqrt{\displaystyle \int_\Sigma \dd s}} \quad \text{as}\quad n \to \infty \quad \text{ in the } L^2(\Gamma_C)\text{--norm}.
\end{align*}
With the above convergence we have that
\begin{align*}
\lim_{n \rightarrow \infty} \int_\Sigma | u_{\gamma_1}^{(g_n)} |^2\dd s = 4 \quad \text{ and }\quad \lim_{n \rightarrow \infty} \int_{\Gamma_C \setminus \Sigma} |u_{\gamma_1}^{(g_n)}|^2 \dd s = 0.
\end{align*}
Then, there exists $\hat{g} \in L^2(\Gamma_N)$ such that
\begin{align*}
 \int_\Sigma |u_{\gamma_1}^{(\hat{g})}|^2 \dd s \geq 2 \quad \text{ and }\quad \int_{\Gamma_C \setminus \Sigma} |u_{\gamma_1}^{(\hat{g})}|^2 \dd s \leq 1/2.
\end{align*}
If $\zeta(x) \geq 1/2$ for $x \in \Sigma$, then since $\zeta(x) \geq -1$ for $x \in \Gamma_C \setminus \Sigma$ we have the estimate 
\begin{align*}
\Psi(\hat{g}; \zeta , \gamma_1 , \gamma_2) &= \int_{\Gamma_C} \zeta |u_{\gamma_1}^{(\hat{g})}|^2 \dd s \geq \frac{1}{2} \int_\Sigma |u_{\gamma_1}^{(\hat{g})}|^2\dd s - \int_{\Gamma_C \setminus \Sigma} |u_{\gamma_1}^{(\hat{g})}|^2 \dd s \geq \frac{1}{2} .
\end{align*}
By the linearity of \eqref{dp1} we have that 
$$\Psi \left(\hat{g}/\| \hat{g} \| ; \zeta , \gamma_1 , \gamma_2\right) =  \Psi(\hat{g}; \zeta , \gamma_1 , \gamma_2) /\| \hat{g} \|^2 \geq \frac{1}{2\| \hat{g} \|^2 }>0$$
which implies that $\sup_{\|g\|=1} \Psi(g; \zeta , \gamma_1 , \gamma_2)\geq1/2$. Now by the proof of Theorem \ref{monotonicity} we have that the mapping 
$$ (\zeta , \kappa_1 , \kappa_2) \mapsto \sup_{\|g\|=1} \Psi(g; \zeta , \kappa_1 , \kappa_2)$$ 
is semi-lower continuous on the compact set $\mathcal{C}\times \mathcal{A}_{[\gamma_{\text{min}},\gamma_{\text{max}}]} \times \mathcal{A}_{[\gamma_{\text{min}},\gamma_{\text{max}}]}$. This implies that it obtains its global minimum which is strictly positive by the above inequality, proving the claim.
\end{proof}

With this result we have completed our analytical study of the inverse shape and inverse parameter problem. To reiterate, we have prove that the inverse shape and inverse parameter problem have uniquely solvable solutions given the full NtD mapping of $\Gamma_N$.

\section{Numerical results}\label{sect-numerics}
In this section, we provide numerical examples for the reconstruction of $\Gamma_C$ using the NtD mapping. To this end, we first derive the corresponding integral equations to obtain the solution $u_0$ and $u$ on $\Gamma_N$ for \eqref{dpu0} and \eqref{dp1}, respectively. For more details on the definition of the integral operators and their jump relations we refer the reader to \cite[Chapter 7]{atkinson1997}. Next, we explain how to obtain $\mathbb{G}(\cdotp,z)$ on $\Gamma_N$ for a given set of points $z$ (see equation \eqref{solG}). Then, we illustrate how to discretize the NtD operator $\Lambda-\Lambda_0$ using the Galerkin approximation in order to apply the LSM (see equation \eqref{volt-gap}) or the FM (see Theorem \ref{fm}). Finally, we provide some reconstructions using both FM and LSM, respectively.

In order to provide numerical evidence of the effectiveness of the sampling methods, we need the following definitions.
We define 
	\[\Phi(x,y)=-\log(|x-y|)/2\pi\,,\quad  x\neq y\] 
to be the fundamental solution of the Laplace equation in $\R^2$. Assume that $A \subset \R^2$ is an arbitrary domain with boundary $\partial A$.
The single-layer potential for the Laplace equation over a given boundary $\partial A$ is denoted by
\begin{eqnarray*}
	\mathrm{SL}^{\partial A} \left[\phi\right](x)&=&\int_{\partial A}
	\Phi(x, y) \, \phi(y) \;\mathrm{d}s(y)\,,\quad x\in A\,,
\end{eqnarray*}
where $\phi$ is some density function. Now, we let $\partial A=  \overline{\Gamma_\alpha} \cup \overline{\Gamma_\beta} $ with $\Gamma_\alpha \cap \Gamma_\beta =\emptyset$ be the boundary of the domain $A$. 
The single- and double-layer boundary integral operators over the boundary $\Gamma_i$ evaluated at a point of $\Gamma_j$ are given as
\begin{eqnarray*}
	\mathrm{S}^{\Gamma_i\rightarrow \Gamma_j}\left[\phi\vert_{\Gamma_i}\right](x)&=&\int_{\Gamma_i} \Phi(x,y)\phi(y)\;\mathrm{d}s(y)\,,\quad x\in\Gamma_j\,,\\
	{\mathrm{T}}^{\Gamma_i\rightarrow \Gamma_j}\left[\phi\vert_{\Gamma_i}\right](x)&=&\int_{\Gamma_i} \partial_{\nu_j(x)}\Phi(x,y)\phi(y)\;\mathrm{d}s(y)\,,\quad x\in\Gamma_j\,,
\end{eqnarray*}
where $i,j\in \{\alpha,\beta \}$. Here, $\partial_{\nu_j(x)}$ denotes the normal derivative, where $\nu_j(x)$ is the exterior normal at $x\in\Gamma_j$.

\subsection{Integral equation for computing $u_0$ on $\Gamma_N$}
We first consider the uncorroded (healthy) object $D$, refer also to \eqref{dpu0}. Now, we are in position to explain how to obtain $u_0$ at any point of $\Gamma_N$.
\begin{proposition}
	Let $D$ be the domain representing the uncorroded object with boundary $\overline{\Gamma_N} \cup \overline{\Gamma_D}$ satisfying $\Gamma_N\cap \Gamma_D=\emptyset$. Then, the solution $u_0$ to \eqref{dpu0} on $\Gamma_N$ for the uncorroded object is given by
	\begin{eqnarray}
		u_0\vert_{\Gamma_N}(x)=\mathrm{S}^{\Gamma_N\rightarrow \Gamma_N}\left[\phi_0\vert_{\Gamma_N}\right](x)+\mathrm{S}^{\Gamma_D\rightarrow \Gamma_N}\left[\phi_0\vert_{\Gamma_D}\right](x)\,,\quad x\in \Gamma_N\,,
		\label{compute1}
	\end{eqnarray}
	where $\phi_0\vert_{\Gamma_N}$ and $\phi_0\vert_{\Gamma_D}$ are given by the solution of
	\begin{eqnarray}
		\left(\begin{array}{cc}
			\mathrm{S}^{\Gamma_N\rightarrow \Gamma_D} & \mathrm{S}^{\Gamma_D\rightarrow \Gamma_D}\\
			\frac{1}{2}I+\mathrm{T}^{\Gamma_N\rightarrow \Gamma_N} &\mathrm{T}^{\Gamma_D\rightarrow \Gamma_N}
		\end{array}\right)
		\left(\begin{array}{cc}
			\phi_0\vert_{\Gamma_N}\\
			\phi_0\vert_{\Gamma_D}
		\end{array}\right)=
		\left(\begin{array}{c}
			0\\
			g
		\end{array}
		\right)\,.
		\label{help1}
	\end{eqnarray}
\end{proposition}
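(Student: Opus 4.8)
The plan is to represent $u_0$ as a single-layer potential over the whole boundary $\partial D = \overline{\Gamma_N} \cup \overline{\Gamma_D}$ and then to translate the two boundary conditions in \eqref{dpu0} into the integral system \eqref{help1}. Concretely, I would make the ansatz
\[
u_0(x) = \mathrm{SL}^{\partial D}\big[\phi_0\big](x) = \int_{\partial D} \Phi(x,y)\,\phi_0(y)\,\dd s(y), \qquad x \in D,
\]
with an unknown density $\phi_0$ on $\partial D$ that decomposes into $\phi_0|_{\Gamma_N}$ and $\phi_0|_{\Gamma_D}$. Because $\Phi$ is harmonic away from its singularity, this ansatz automatically satisfies $\Delta u_0 = 0$ in $D$, so only the boundary conditions remain to be imposed, and the representation \eqref{compute1} is nothing but the (continuous) trace of this single-layer potential restricted to $\Gamma_N$.

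Next I would enforce the two boundary conditions separately. For the homogeneous Dirichlet condition $u_0 = 0$ on $\Gamma_D$, I use that the single-layer potential has a continuous trace across $\partial D$, so its restriction to $\Gamma_D$ is $\mathrm{S}^{\Gamma_N\to\Gamma_D}[\phi_0|_{\Gamma_N}] + \mathrm{S}^{\Gamma_D\to\Gamma_D}[\phi_0|_{\Gamma_D}]$; setting this equal to zero gives the first row of \eqref{help1}. For the Neumann condition $\partial_\nu u_0 = g$ on $\Gamma_N$, I use the jump relation for the normal derivative of the single-layer potential: taking the limit from inside $D$ with $\nu$ the outward normal produces the $+\tfrac{1}{2}I$ term acting on $\phi_0|_{\Gamma_N}$ (the jump only contributes on the piece containing the evaluation point, since $\Gamma_N$ and $\Gamma_D$ are disjoint), which yields $\big(\tfrac{1}{2}I + \mathrm{T}^{\Gamma_N\to\Gamma_N}\big)[\phi_0|_{\Gamma_N}] + \mathrm{T}^{\Gamma_D\to\Gamma_N}[\phi_0|_{\Gamma_D}] = g$, the second row of \eqref{help1}. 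Solving \eqref{help1} for the densities and inserting them into the single-layer trace on $\Gamma_N$ then reproduces \eqref{compute1}.

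The delicate point --- and the one I expect to be the main obstacle --- is to verify that \eqref{help1} is uniquely solvable, so that the construction genuinely yields \emph{the} solution of \eqref{dpu0} rather than merely a candidate. I would argue by Fredholm theory combined with the already-established well-posedness of \eqref{dpu0}: the on-diagonal operators are the weakly singular single-layer and the Cauchy-type adjoint double-layer operators and the off-diagonal blocks (between the disjoint pieces $\Gamma_N$ and $\Gamma_D$) have smooth kernels, so the system is of Fredholm type and it suffices to treat the homogeneous case $g=0$. If $\phi_0$ solves the homogeneous system, then $u_0 = \mathrm{SL}^{\partial D}[\phi_0]$ solves the homogeneous mixed problem, hence $u_0 \equiv 0$ in $D$ by uniqueness of \eqref{dpu0}, and by trace continuity $u_0$ vanishes on all of $\partial D$. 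The subtlety is the exterior argument needed to conclude $\phi_0 \equiv 0$ from the normal-derivative jump: in two dimensions one must control the logarithmic growth of the single-layer potential at infinity when the total charge $\int_{\partial D}\phi_0\,\dd s$ is nonzero. I would close this by solving the exterior Dirichlet problem with the (vanishing) trace, invoking uniqueness of the exterior problem under the natural growth/decay condition to obtain $u_0 \equiv 0$ outside $\overline{D}$ as well; then both one-sided normal derivatives vanish and the jump relation forces $\phi_0 \equiv 0$. Unique solvability of \eqref{help1} together with uniqueness for \eqref{dpu0} then confirms that \eqref{compute1} returns the true trace $u_0|_{\Gamma_N}$.
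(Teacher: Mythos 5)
Your derivation coincides with the paper's proof: the same single-layer ansatz over all of $\partial D$ split into the two disjoint pieces, the continuity of the single-layer trace on $\Gamma_D$ giving the first equation, the normal-derivative jump relation on $\Gamma_N$ giving the $\tfrac{1}{2}I$ term and the second equation, and finally the trace on $\Gamma_N$ to recover the representation of $u_0\vert_{\Gamma_N}$. The only difference is that you go further and sketch a Fredholm/uniqueness argument for the solvability of the integral system (including the two-dimensional logarithmic-growth subtlety for the exterior problem), a point the paper's proof does not address at all, so your write-up is, if anything, more complete than the original.
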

\begin{proof} 
We use a single-layer ansatz to represent the solution $u_0$ inside $D$ as
\begin{eqnarray}
	u_0(x)&=&\mathrm{SL}^{\partial D} \left[\phi_0 \right](x) = \mathrm{SL}^{\Gamma_N} \left[\phi_0\vert_{\Gamma_N}\right](x)+\mathrm{SL}^{\Gamma_D}\left[\phi_0\vert_{\Gamma_D}\right](x)\,,\qquad x\in D\,,
	\label{start}
\end{eqnarray}
where we used the fact that the given boundary is a disjoint union of $\Gamma_N$ and $\Gamma_D$. Here, $\phi_0\vert_{\Gamma_N}$ and $\phi_0\vert_{\Gamma_D}$ are yet unknown functions.
 Letting $D\ni x\rightarrow x\in \Gamma_D$ in (\ref{start}) together with the jump relation and the boundary condition $u_0\vert_{\Gamma_D}=0$ gives the first boundary integral equation
\begin{eqnarray}
	0= \mathrm{S}^{\Gamma_N\rightarrow \Gamma_D}\left[\phi_0\vert_{\Gamma_N}\right](x)+\mathrm{S}^{\Gamma_D\rightarrow \Gamma_D}\left[\phi_0\vert_{\Gamma_D}\right](x)\,,\quad x\in \Gamma_D\,.
	\label{one}
\end{eqnarray}
Taking the normal derivative of (\ref{start}) and letting $D\ni x\rightarrow x\in \Gamma_N$ along with the jump and the boundary condition $\partial_{\nu}u_0\vert_{\Gamma_N}=g$ yields the second boundary integral equation
\begin{eqnarray}
	g(x)=\mathrm{T}^{\Gamma_N\rightarrow \Gamma_N}\left[\phi_0\vert_{\Gamma_N}\right](x)+\frac{1}{2}\phi_0\vert_{\Gamma_N}(x)+\mathrm{T}^{\Gamma_D\rightarrow \Gamma_N}\left[\phi_0\vert_{\Gamma_D}\right](x)\,,\quad x\in \Gamma_N\,.
	\label{two}
\end{eqnarray}
Equations (\ref{one}) and (\ref{two}) can be written together as the system (\ref{help1})
which have to be solved for $\phi_0\vert_{\Gamma_N}$ and $\phi_0\vert_{\Gamma_D}$. Here, $I$ denotes the identity operator. With this, we can use (\ref{start}) to obtain $u_0$ at any point within $D$. Letting $D\ni x\rightarrow x\in \Gamma_N$ along with the jump relations yields (\ref{compute1}).
\end{proof}

\subsection{Integral equation for computing $u$ on $\Gamma_N$}
Next, we consider the corroded object $D\backslash \overline{\Omega}$, refer also to \eqref{dp1} . Now, we are in position to explain how to obtain $u$ at any point of $\Gamma_N$.
\begin{proposition}
	Let $D\backslash \overline{\Omega}$ be the domain representing the corroded object with boundary $\overline{\Gamma_N }\cup \overline{\Gamma_C}$ satisfying $\Gamma_N\cap \Gamma_C=\emptyset$. Then, the solution $u$ to \eqref{dp1} on $\Gamma_N$ for the corroded object is given by
	\begin{align}
		u\vert_{\Gamma_N}(x)=\mathrm{S}^{\Gamma_N\rightarrow \Gamma_N}\left[\phi\vert_{\Gamma_N}\right](x)+\mathrm{S}^{\Gamma_C\rightarrow \Gamma_N}\left[\phi\vert_{\Gamma_C}\right](x)\,,\quad x\in \Gamma_N\,,
		\label{compute2}
	\end{align}
	 where $\phi\vert_{\Gamma_N}$ and $\phi\vert_{\Gamma_C}$ are given by the solution of
	 \begin{align}
	 	\left(\begin{array}{cc}
	 		\mathrm{T}^{\Gamma_N\rightarrow \Gamma_C}+\gamma \mathrm{S}^{\Gamma_N\rightarrow \Gamma_C} & \frac{1}{2}I+\mathrm{T}^{\Gamma_C\rightarrow \Gamma_C}+ \gamma\mathrm{S}^{\Gamma_C\rightarrow \Gamma_C}\\
	 		\frac{1}{2}I+\mathrm{T}^{\Gamma_N\rightarrow \Gamma_N} &\mathrm{T}^{\Gamma_C\rightarrow \Gamma_N}
	 	\end{array}\right)
	 	\left(\begin{array}{cc}
	 		\phi\vert_{\Gamma_N}\\
	 		\phi\vert_{\Gamma_C}
	 	\end{array}\right)=
	 	\left(\begin{array}{c}
	 		0\\
	 		g
	 	\end{array}
	 	\right)\,.
	 	\label{help2}
	 \end{align}
\end{proposition}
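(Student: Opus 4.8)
The plan is to mirror the single-layer argument used for the healthy domain in the preceding proposition, the essential new ingredient being that the boundary condition on $\Gamma_C$ is now of Robin type and therefore couples both the trace and the normal derivative of the ansatz. First I would represent the solution $u$ to \eqref{dp1} inside $\domega$ by a single-layer potential over the full boundary $\overline{\Gamma_N}\cup\overline{\Gamma_C}$,
\begin{align*}
u(x) = \mathrm{SL}^{\partial \domega}[\phi](x) = \mathrm{SL}^{\Gamma_N}[\phi|_{\Gamma_N}](x) + \mathrm{SL}^{\Gamma_C}[\phi|_{\Gamma_C}](x), \qquad x \in \domega,
\end{align*}
with unknown densities $\phi|_{\Gamma_N}$ and $\phi|_{\Gamma_C}$. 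Since $\Phi$ is the fundamental solution of the Laplacian, this $u$ is automatically harmonic in $\domega$, so only the two boundary conditions remain to be enforced.

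The crux of the argument, and the step that differs most from the $u_0$ case, is the Robin condition $\dnu u + \gamma u = 0$ on $\Gamma_C$. I would let $\domega \ni x \to x \in \Gamma_C$ in the ansatz. The single-layer potential has a continuous trace, so $\gamma u|_{\Gamma_C}$ contributes the terms $\gamma\big(\mathrm{S}^{\Gamma_N\to\Gamma_C}[\phi|_{\Gamma_N}] + \mathrm{S}^{\Gamma_C\to\Gamma_C}[\phi|_{\Gamma_C}]\big)$. Its normal derivative, on the other hand, carries the jump term $\tfrac{1}{2}\phi|_{\Gamma_C}$ together with the adjoint double-layer contributions $\mathrm{T}^{\Gamma_N\to\Gamma_C}[\phi|_{\Gamma_N}]$ and $\mathrm{T}^{\Gamma_C\to\Gamma_C}[\phi|_{\Gamma_C}]$, exactly in the manner of \eqref{two}. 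Adding these and imposing $\dnu u + \gamma u = 0$ produces the first row of \eqref{help2}, namely
\begin{align*}
\left(\mathrm{T}^{\Gamma_N\to\Gamma_C} + \gamma\,\mathrm{S}^{\Gamma_N\to\Gamma_C}\right)[\phi|_{\Gamma_N}] + \left(\tfrac{1}{2}I + \mathrm{T}^{\Gamma_C\to\Gamma_C} + \gamma\,\mathrm{S}^{\Gamma_C\to\Gamma_C}\right)[\phi|_{\Gamma_C}] = 0.
\end{align*}

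Next I would enforce the Neumann condition $\dnu u = g$ on $\Gamma_N$ by taking the normal derivative of the ansatz and letting $\domega \ni x \to x \in \Gamma_N$; the jump relation again supplies the $\tfrac{1}{2}\phi|_{\Gamma_N}$ term and yields $\big(\tfrac{1}{2}I + \mathrm{T}^{\Gamma_N\to\Gamma_N}\big)[\phi|_{\Gamma_N}] + \mathrm{T}^{\Gamma_C\to\Gamma_N}[\phi|_{\Gamma_C}] = g$, which is the second row of \eqref{help2}. Stacking the two equations gives the block system \eqref{help2}, and taking the continuous single-layer trace of the ansatz as $x \to \Gamma_N$ delivers the representation \eqref{compute2}. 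The main obstacle I anticipate is not analytical depth but careful bookkeeping of signs and jump conventions: one must use the interior normal-derivative limit consistently (the $+\tfrac12$ convention fixed by \eqref{two}) and keep the trace operator $\mathrm{S}$ and the adjoint double-layer operator $\mathrm{T}$ correctly paired in the Robin row. Existence and uniqueness of the densities then follow from the well-posedness of \eqref{dp1} in Theorem \ref{wpnu} together with standard Fredholm theory for the resulting second-kind system.
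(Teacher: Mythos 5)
Your proposal is correct and follows essentially the same route as the paper: a single-layer ansatz over $\overline{\Gamma_N}\cup\overline{\Gamma_C}$, the continuous trace plus the jump relation for the normal derivative on $\Gamma_C$ combined into the Robin condition for the first row, the Neumann condition on $\Gamma_N$ for the second row, and the trace on $\Gamma_N$ for the representation formula. Your closing remark on solvability via Fredholm theory is a reasonable addition that the paper does not spell out, but the core argument is identical.
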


\begin{proof}
Using a single-layer ansatz 
\begin{eqnarray}
	u(x)&=&\mathrm{SL}^{\partial (D\backslash \overline{\Omega})} \left[\phi\right](x) = \mathrm{SL}^{\Gamma_N} \left[\phi\vert_{\Gamma_N}\right](x)+\mathrm{SL}^{\Gamma_C}\left[\phi\vert_{\Gamma_C}\right](x)\,,\qquad x\in D\backslash \overline{\Omega}\,,
	\label{start2}
\end{eqnarray}
where $\phi\vert_{\Gamma_N}$ and $\phi\vert_{\Gamma_C}$ are again unknown functions. As before, we obtain on $\Gamma_C$
\begin{eqnarray*}
	u\vert_{\Gamma_C}(x)&=&\mathrm{S}^{\Gamma_N\rightarrow \Gamma_C} \left[\phi\vert_{\Gamma_N}\right](x)+\mathrm{S}^{\Gamma_C\rightarrow\Gamma_C}\left[\phi\vert_{\Gamma_C}\right](x)\,,\qquad x\in \Gamma_C\,,\\
	\partial_\nu u\vert_{\Gamma_C}(x)&=&\mathrm{T}^{\Gamma_N\rightarrow \Gamma_C} \left[\phi\vert_{\Gamma_N}\right](x)+\mathrm{T}^{\Gamma_C\rightarrow\Gamma_C}\left[\phi\vert_{\Gamma_C}\right](x)+\frac{1}{2}\phi\vert_{\Gamma_C}\,,\qquad x\in \Gamma_C\,,
\end{eqnarray*}
and hence using the boundary condition $\partial_\nu u+\gamma u=0$ on $\Gamma_C$ we obtain the first boundary integral equation
\begin{eqnarray}
	0&=&\mathrm{T}^{\Gamma_N\rightarrow \Gamma_C}\left[\phi\vert_{\Gamma_N}\right](x)+\mathrm{T}^{\Gamma_C\rightarrow\Gamma_C}\left[\phi\vert_{\Gamma_C}\right](x)+\frac{1}{2}\phi\vert_{\Gamma_C}\nonumber\\
	&+&\gamma \mathrm{S}^{\Gamma_N\rightarrow \Gamma_C} \left[\phi\vert_{\Gamma_N}\right](x)+\gamma\mathrm{S}^{\Gamma_C\rightarrow\Gamma_C}\left[\phi\vert_{\Gamma_C}\right](x)\,,\qquad x\in \Gamma_C\,.
	\label{one2}
\end{eqnarray}
Using the boundary condition $\partial_\nu u\vert_{\Gamma_N}=g$ yields the second boundary integral equation 
\begin{eqnarray}
	g(x)=\mathrm{T}^{\Gamma_N\rightarrow \Gamma_N}\left[\phi\vert_{\Gamma_N}\right](x)+\frac{1}{2}\phi\vert_{\Gamma_N(x)}+\mathrm{T}^{\Gamma_C\rightarrow \Gamma_N}\left[\phi\vert_{\Gamma_C}\right](x)\,,\quad x\in \Gamma_N\,.
	\label{two2}
\end{eqnarray}
Equations (\ref{one2}) and (\ref{two2}) can be written together as the system (\ref{help2})
which have to be solved for $\phi\vert_{\Gamma_N}$ and $\phi\vert_{\Gamma_C}$. With this, we can use (\ref{start2}) to obtain $u$ at any point within $D\backslash \overline{\Omega}$. Letting $D\backslash \overline{\Omega}\ni x\rightarrow x\in \Gamma_N$ along with the jump condition yields (\ref{compute2}).
\end{proof}

\subsection{Integral equation for computing $\mathbb{G}(\cdotp,z)$}
In order to solve the inverse shape problem, for fixed $z\in D$, we need to compute $\mathbb{G}(\cdotp,z)$ on $\Gamma_N$ (refer also to \eqref{volt-gap}). Recall, that $\mathbb{G}(\cdotp,z)$ satisfies
\begin{eqnarray*}
-\Delta \mathbb{G}(\cdot, z) = \delta(\cdot -z) \textrm{ in } D,  \quad \dnu{\mathbb{G}(\cdot, z)} = 0 \textrm{ on } \Gamma_N, \quad \text{ and } \quad \mathbb{G}(\cdot, z) = 0 \textrm{ on } \, {\Gamma_D}.
\end{eqnarray*}
Just as in \cite{Zaremba}, we assume that 
\[\mathbb{G}(\cdotp,z)=w(\cdotp,z)+\Phi(\cdotp,z)\,,\] 
where $\Phi(\cdotp,z)$ is again the fundamental solution of the Laplace equation in $\mathbb{R}^2$. Then, $w(\cdotp,z)$ obviously satisfies
\begin{eqnarray*}
\Delta w(\cdotp,z)= 0   \textrm{ in }  D,   \quad  \partial_{\nu}w(\cdotp,z)=-\partial_\nu\Phi(\cdotp,z)  \textrm{ on }   \Gamma_N ,  \quad  w(\cdotp,z)=-\Phi(\cdotp,z)  \textrm{ on }   \Gamma_D\,.
\end{eqnarray*}
Our task now is to compute $w(\cdotp,z)$ on $\Gamma_N$ in order to approximate $\mathbb{G}(\cdotp,z)$ on $\Gamma_N$.
\begin{proposition}
	Let $z\in D$ be fixed. Then $\mathbb{G}(\cdotp,z)$ on $\Gamma_N$ is given by
	\[\mathbb{G}(\cdotp,z)\vert_{\Gamma_N}=w(\cdotp,z)\vert_{\Gamma_N}+\Phi(\cdotp,z)\vert_{\Gamma_N}\,.\]
	Here, $w(\cdotp,z)\vert_{\Gamma_N}$ is obtained through
	\begin{eqnarray}
		w(x,z)\vert_{\Gamma_N}=\mathrm{S}^{\Gamma_N\rightarrow \Gamma_N}\left[\phi_z\vert_{\Gamma_N}\right](x)+\mathrm{S}^{\Gamma_D\rightarrow \Gamma_N}\left[\phi_z\vert_{\Gamma_D}\right](x)\,,\quad x\in \Gamma_N\,,
		\label{compute3}
	\end{eqnarray}
	where $\phi_z\vert_{\Gamma_N}$ and $\phi_z\vert_{\Gamma_D}$ are given by the solution of
	 \begin{eqnarray}
	 	\left(\begin{array}{cc}
	 		\mathrm{S}^{\Gamma_N\rightarrow \Gamma_D} & \mathrm{S}^{\Gamma_D\rightarrow \Gamma_D}\\
	 		\frac{1}{2}I+\mathrm{T}^{\Gamma_N\rightarrow \Gamma_N} &\mathrm{T}^{\Gamma_D\rightarrow \Gamma_N}
	 	\end{array}\right)
	 	\left(\begin{array}{cc}
	 		\phi_z \vert_{\Gamma_N}\\
	 		\phi_z \vert_{\Gamma_D}
	 	\end{array}\right)=
	 	\left(\begin{array}{c}
	 		-\Phi(\cdotp,z) \vert_{\Gamma_N} \\
	 		-\partial_\nu\Phi(\cdotp,z) \vert_{\Gamma_D}
	 	\end{array}
	 	\right)\,.
	 	\label{help3}
	 \end{eqnarray}
\end{proposition}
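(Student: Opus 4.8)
The plan is to follow the derivation of the first proposition (the computation of $u_0$) almost verbatim, because the singularity-splitting $\mathbb{G}(\cdotp,z)=w(\cdotp,z)+\Phi(\cdotp,z)$ reduces the problem to a source-free mixed boundary value problem for $w(\cdotp,z)$ of exactly the same type as \eqref{dpu0}, only with inhomogeneous data. Indeed, since $-\Delta\Phi(\cdotp,z)=\delta(\cdotp-z)$, the point source is removed and $w(\cdotp,z)$ is harmonic throughout $D$ with $\dnu{w(\cdotp,z)}=-\dnu{\Phi(\cdotp,z)}$ on $\Gamma_N$ and $w(\cdotp,z)=-\Phi(\cdotp,z)$ on $\Gamma_D$. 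Because $z\in D$ is a fixed interior point, both $\Phi(\cdotp,z)$ and $\dnu{\Phi(\cdotp,z)}$ are smooth on $\partial D$, so the single-layer approach used for $u_0$ applies without modification.

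First I would represent $w(\cdotp,z)$ inside $D$ by the single-layer ansatz $w(\cdotp,z)=\mathrm{SL}^{\Gamma_N}[\phi_z\vert_{\Gamma_N}]+\mathrm{SL}^{\Gamma_D}[\phi_z\vert_{\Gamma_D}]$ with unknown densities $\phi_z\vert_{\Gamma_N}$ and $\phi_z\vert_{\Gamma_D}$. Letting the evaluation point approach $\Gamma_D$ from inside $D$ and using the continuity of the single-layer trace together with the Dirichlet condition $w=-\Phi$ on $\Gamma_D$ produces the first integral equation $\mathrm{S}^{\Gamma_N\rightarrow\Gamma_D}[\phi_z\vert_{\Gamma_N}]+\mathrm{S}^{\Gamma_D\rightarrow\Gamma_D}[\phi_z\vert_{\Gamma_D}]=-\Phi(\cdotp,z)\vert_{\Gamma_D}$. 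Taking the normal derivative of the ansatz, letting the point approach $\Gamma_N$, and invoking the same jump relation $\tfrac{1}{2}I+\mathrm{T}$ used in the first proposition, the Neumann condition $\dnu{w}=-\dnu{\Phi}$ on $\Gamma_N$ gives the second integral equation $(\tfrac{1}{2}I+\mathrm{T}^{\Gamma_N\rightarrow\Gamma_N})[\phi_z\vert_{\Gamma_N}]+\mathrm{T}^{\Gamma_D\rightarrow\Gamma_N}[\phi_z\vert_{\Gamma_D}]=-\dnu{\Phi(\cdotp,z)}\vert_{\Gamma_N}$. Writing these two equations together assembles into the system \eqref{help3}.

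Having solved \eqref{help3} for the densities, I would then let the evaluation point approach $\Gamma_N$ in the ansatz, again using trace continuity of the single layer, to obtain \eqref{compute3}; adding back $\Phi(\cdotp,z)\vert_{\Gamma_N}$ recovers $\mathbb{G}(\cdotp,z)\vert_{\Gamma_N}=w(\cdotp,z)\vert_{\Gamma_N}+\Phi(\cdotp,z)\vert_{\Gamma_N}$, as claimed.

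The work here is essentially bookkeeping rather than a genuine obstacle, so the main points requiring care are the correct jump relations and the solvability of \eqref{help3}. For the former, the single-layer trace is continuous, so the $\Gamma_D$ equation carries no jump term, whereas the normal-derivative limit on $\Gamma_N$ contributes the $\tfrac{1}{2}I$ term under the orientation convention already fixed for $u_0$; this is the only place a sign could go astray. The latter follows from the well-posedness of the mixed problem for $w(\cdotp,z)$, which is of the same type as \eqref{dpu0} and was already established, so that the single-layer ansatz reproduces this unique solution under the standard non-degeneracy of the single-layer operator in $\R^2$ (e.g.\ after the usual scaling to avoid the logarithmic-capacity degeneracy), exactly as tacitly assumed in the computation of $u_0$.
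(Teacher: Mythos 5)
Your proposal is correct and follows essentially the same route as the paper: the same single-layer ansatz for $w(\cdotp,z)$ split over $\Gamma_N$ and $\Gamma_D$, the same trace and normal-derivative jump relations producing the two boundary integral equations, and the same assembly into the system followed by evaluation on $\Gamma_N$. Your added remarks on solvability and the logarithmic-capacity caveat go slightly beyond what the paper records (it simply states the system "has to be solved"), but this does not change the argument.
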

\begin{proof}
We make the ansatz
\begin{eqnarray}
	w(x,z)&=&\mathrm{SL}^{\partial D} \left[\phi_z\right](x) =\mathrm{SL}^{\Gamma_N} \left[\phi_z\vert_{\Gamma_N}\right](x)+\mathrm{SL}^{\Gamma_D}\left[\phi_z\vert_{\Gamma_D}\right](x)\,,\quad x\in D\,.
	\label{start3}
\end{eqnarray}
Here, $\phi_z\vert_{\Gamma_N}$ and $\phi_z\vert_{\Gamma_D}$ are yet unknown functions.
Letting $D\ni x\rightarrow x\in \Gamma_D$ in (\ref{start3}) together with the jump conditions and the boundary condition $w(\cdotp,z)\vert_{\Gamma_D}=-\Phi(\cdotp,z)\vert_{\Gamma_D}$ gives the first boundary integral equation
\begin{eqnarray}
	-\Phi(x,z)= \mathrm{S}^{\Gamma_N\rightarrow \Gamma_D}\left[\phi_z\vert_{\Gamma_N}\right](x)+\mathrm{S}^{\Gamma_D\rightarrow \Gamma_D}\left[\phi_z\vert_{\Gamma_D}\right](x)\,,\quad x\in \Gamma_D\,.
	\label{one3}
\end{eqnarray}
Taking the normal derivative of (\ref{start3}) and letting $D\ni x\rightarrow x\in \Gamma_N$ along with the jump condition and the boundary condition $\partial_{\nu}w(\cdotp,z)\vert_{\Gamma_N}=-\partial_\nu\Phi(\cdotp,z)\vert_{\Gamma_N}$ yields the second boundary integral equation
\begin{align}
	-\partial_\nu\Phi(x,z)=\mathrm{T}^{\Gamma_N\rightarrow \Gamma_N}\left[\phi_z\vert_{\Gamma_N}\right](x)+\frac{1}{2}\phi_z\vert_{\Gamma_N}(x)+\mathrm{T}^{\Gamma_D\rightarrow \Gamma_N}\left[\phi_z\vert_{\Gamma_D}\right](x)\,,\,\, x\in \Gamma_N\,.
	\label{two3}
\end{align}
Equations (\ref{one3}) and (\ref{two3}) can be written together as the system (\ref{help3}) which we have to solve for $\phi_z\vert_{\Gamma_N}$ and $\phi_z\vert_{\Gamma_D}$. With this, we can use (\ref{start3}) to obtain $w(\cdotp,z)$ at any point within $D$. Letting $D\ni x\rightarrow x\in \Gamma_N$ along with the jump conditions yields (\ref{compute3}).
\end{proof}

\subsection{Discretization of the DtN operator $\Lambda-\Lambda_0$}
Now, we illustrate how to approximate the equation 
\begin{eqnarray}
 (\Lambda-\Lambda_0)g_z=\mathbb{G}(\cdotp,z)\vert_{\Gamma_N}
 \label{myDtN}
\end{eqnarray}
with the Galerkin method for a fixed $z\in D$. Without loss of generality, we assume that the functions on $\Gamma_N$ are parametrized by $x(\theta)$ such that $\theta \in (\theta_1,\theta_2)\subseteq [0,2\pi]$. For a given set of `Fourier' basis functions $\varphi_n(\theta)$ and yet unknown `Fourier' coefficients $g_n^{(z)}$ 
\[g_z(\theta)=\sum_{n=0}^{\infty} g_n^{(z)}\cdotp \varphi_n(\theta)\]
which is approximated by a finite sum
\[g_z(\theta) \approx \sum_{n=0}^{N_{{B}}} g_n^{(z)}\cdotp \varphi_n(\theta)\]
(i.e. $N_B +1$ denotes the number of basis functions). Now, we insert this into (\ref{myDtN}) to obtain
\[\sum_{n=0}^{N_{\text{B}}}g_n^{(z)}\cdotp (\Lambda-\Lambda_0)\varphi_n(\theta)=\mathbb{G}(\theta,z)\,.\]
Multiplying this equation with $\varphi_m (\theta)$ for $m\in \{0,1,\ldots,N_{B}\}$ and integrating over $[\theta_1,\theta_2]$ yields the linear system of size $(N_{{B}}+1)\times (N_{{B}}+1)$
\[\sum_{n=0}^{N_B}g_n^{(z)} \int_{\theta_1}^{\theta_2} \varphi_m(\theta)\cdotp  (\Lambda-\Lambda_0)\varphi_n(\theta)\,\mathrm{d}s(\theta) =\int_{\theta_1}^{\theta_2}\varphi_m(\theta)\cdotp \mathbb{G}(\theta,z)\,\mathrm{d}s(\theta) \,,\]
where the unknown `Fourier' coefficients $g_n^{(z)}$ are to be determined. We write this linear system abstractly as 
\[Bg^{(z)}=b^{(z)}\,.\]
To compute the matrix entries for each $m$ and $n$ numerically
\begin{eqnarray}
B_{mn}=\int_{\theta_1}^{\theta_2} \varphi_m(\theta)\cdotp  (\Lambda-\Lambda_0)\varphi_n(\theta)\,\mathrm{d}s(\theta) 
\label{matrix}
\end{eqnarray}
we subdivide the interval $[\theta_1,\theta_2]$ into $n_f$ equidistant panels and apply to each panel Gau\ss{}-Legendre quadrature using three quadrature nodes and three weights. 
 Note that the matrix $B$ should become symmetric for increasing $n_f$, since the operator $(\Lambda-\Lambda_0)$ is self-adjoint. In the same way, we approximate the right hand side for each $m$
\begin{eqnarray}b^{(z)}_m=\int_{\theta_1}^{\theta_2}\varphi_m(\theta)\cdotp \mathbb{G}(\theta,z)\,\mathrm{d}s(\theta) \,.
\label{rhs}
\end{eqnarray}
\begin{remark}
To obtain $(u-u_0 )\vert_{\Gamma_N} = (\Lambda-\Lambda_0)\varphi_n(\theta)$ as well as $\mathbb{G}(\theta,z)$ for fixed $z\in D$ (compare also (\ref{compute1}) and (\ref{compute2}) as well as (\ref{compute3}) for the corresponding integral equation), we use as discretization the boundary element collocation method as done in \cite{kleefeldhotspot} using the wave number $0$. We use $\alpha=(1-\sqrt{3/5})/2$, then the collocation nodes on $\Gamma_N$ are exactly the three Gau\ss{}-Legendre nodes on each panel that are needed in the approximation of (\ref{matrix}) and (\ref{rhs}), respectively. Hence, we are now in position to create the matrix $B$ which approximates $(\Lambda-\Lambda_0)$ and the right hand side $b^{(z)}$ for different domains. That is, we can now create synthetic data which then can be used for the reconstruction algorithms LSM or FM.
\end{remark}

\subsection{Reconstructions with FMreg and LSMreg}
Now, we present some reconstructions using FM and LSM. For the details on the implementation of the FM, we refer the reader to \cite{kirschbook}. We compute $W_{\mathrm{FM}}(z)$ (see section \ref{FMsect} for the definition) for $z \in \mathcal{G}$, where $\mathcal{G}$ is a set of grid points covering the domain of interest. In the following, we plot ${\displaystyle W_{\mathrm{FM}}^{\log}(z)=\log (W_{\mathrm{FM}}(z))}$. With this definition, we have that for ${\displaystyle z\in \Omega \iff W_{\mathrm{FM}}^{\log}(z)\gg 0}$ and ${\displaystyle z\in D\backslash\Omega \iff W_{\mathrm{FM}}^{\log}(z)\ll 0}$. We will use regularization for the factorization method (FMreg) by only using the singular values that are greater than $10^{-5}$ since the problem is severely ill-posed. We denote the regularized version of $W_{\mathrm{FM}}^{\log}(z)$ by ${\displaystyle W_{\mathrm{FMreg}}^{\log}(z)}$. The regularized version of the linear sampling method (LSMreg) which is denoted by ${\displaystyle W_{\mathrm{LSMreg}}^{\log}(z)}$ (refer to section \ref{LSMsect} for details), where we solve 
$$(B^\ast B+\alpha I)g^{(z)}=B^\ast b^{(z)} \quad \text{with} \quad \alpha=10^{-5}$$ 
the Tikhonov regularization of $Bg^{(z)}=b^{(z)}$.\\

\noindent{\bf Example 1:} 
Let the domain be a square $D=[0,2\pi]\times [-2\pi,0]$ completely buried and only the upper part of the domain is visible. Parts of the square are corroded as shown in Figure \ref{figex2new}. Precisely, the corroded part is given by the polygon with vertices $(2\pi,0)$, $(0,0)$, $(\pi/2,-3\pi/2)$, and $(3\pi/2,-3\pi/2)$. 
 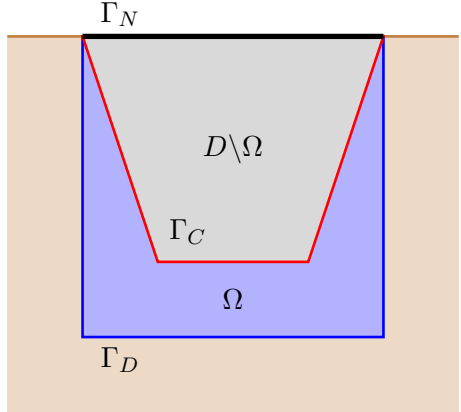
\begin{figure}[H]
		\centering
		\begin{tikzpicture}
			\draw [fill=white!30, draw=none] (-3,-3) rectangle (3,3);
			\draw [fill=brown!30, draw=none] (-3,-3) rectangle (3,2);
			\draw[line width=1pt, brown] (-3,2) -- (3,2);
			\draw [line width=1pt,fill=blue!30, draw=blue] (-2,-2) rectangle (2,2);			
			\draw[line width=1pt, fill=gray!30, draw=red] (2,2) -- (-2,2) -- (-1,-1) -- (1,-1) -- cycle;
			\draw[line width=2pt, black] (-2,2) -- (2,2);			
			\node at (0,0.5) {$D\backslash \Omega$};
			\node at (0,-1.5) {$\Omega$};
			\node at (-0.6,-0.6) {$\Gamma_C$};
			\node at (-1.5,-2.3) {$\Gamma_D$};
			\node at (-1.5,2.3) {$\Gamma_N$};
		\end{tikzpicture}
		\caption{\label{figex2new}The buried object for Example 1.}
	\end{figure}
To create the data, we use the 20 basis functions $\varphi_n(\theta)=\cos(n\theta)$ for $n\in \{0,1,\ldots,19\}$ and $\theta\in [0,2\pi]$ and $n_f=300$ on $\Gamma_N$ for the boundary element collocation method. Furthermore, for simplicity we use $\gamma=1/2$ and $\gamma=2$. For the FMreg and LSMreg, we use an equidistant grid of size $100\times 100$ of $D$. We choose the level--curve$=3/2$ as threshold value for recovering $\Gamma_C$ by the FMreg imaging functional. For the LSMreg imaging functional we choose the level--curve$=-1/2$ as threshold value. In Figure \ref{figex2new2} and \ref{figex2new21}, we present the reconstruction results with the FMreg and the LSMreg.
	\begin{figure}[H]
		\centering
		\includegraphics[width=6.5cm]{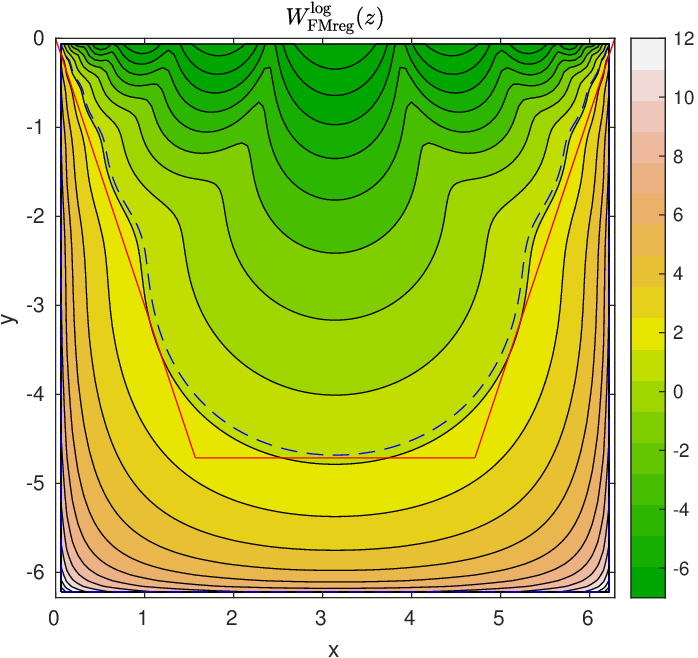}
		\includegraphics[width=6.5cm]{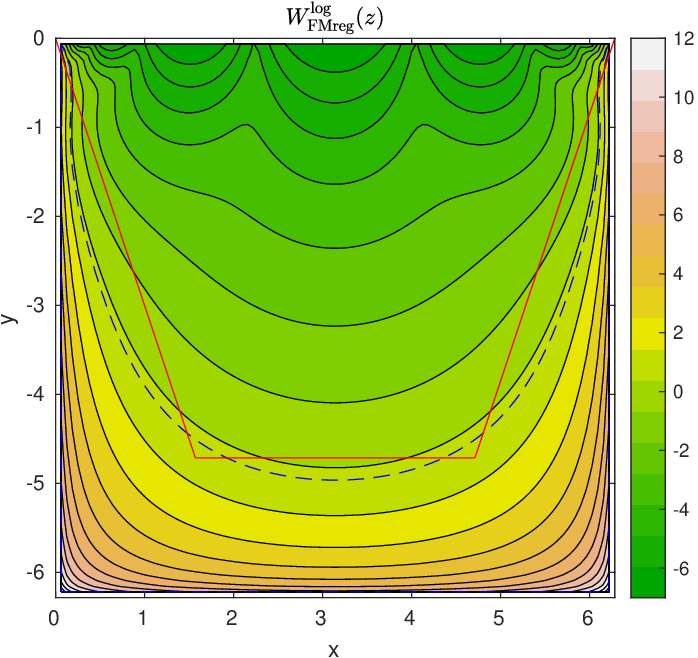}
		\caption{\label{figex2new2}Reconstructions via the FMreg imaging functional. The dashed blue line is the approximated boundary of the reconstruction. Left: $\gamma=1/2$ and Right: $\gamma=2$.}
	\end{figure}
	
		\begin{figure}[H]
		\centering
		\includegraphics[width=6.5cm]{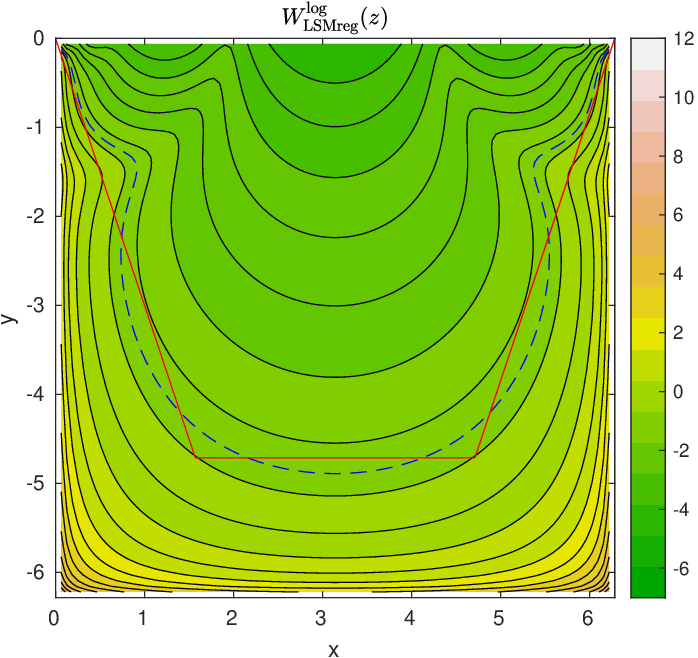}
		\includegraphics[width=6.5cm]{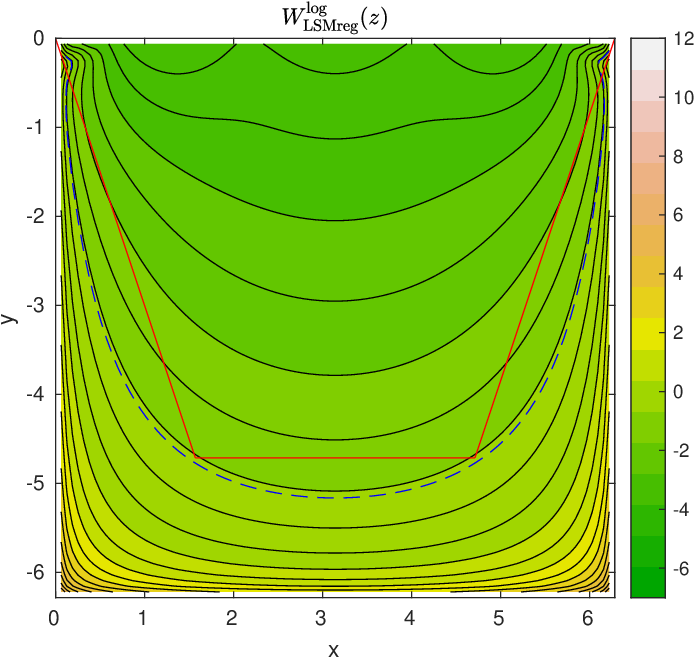}
		\caption{\label{figex2new21}Reconstructions via the LSMreg imaging functional. The dashed blue line is the approximated boundary of the reconstruction. Left: $\gamma=1/2$ and $\gamma=2$ and Right: $\gamma=2$.}
	\end{figure}
We observe reasonable reconstructions using the FMreg and LSMreg although not perfect which is expected as the problem is severely ill-posed. \\

\noindent{\bf Example 2:} We now consider a wedge-shaped domain using the angle of $\pi/2$. A certain part of it is corroded as shown in Figure \ref{figex3new}. Precisely, the corroded part is given by the triangle with vertices $(1,0)$, $(0,1)$, and $(0,0)$.
 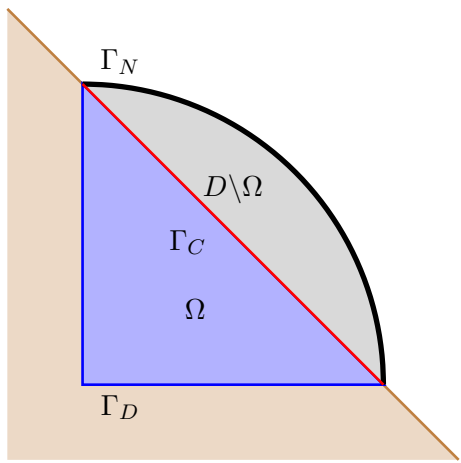
\begin{figure}[H]
		\centering
		\begin{tikzpicture}
		   	 \draw [fill=white!30, draw=none] (-3,-3) rectangle (3,3);
		   	 \draw[line width=1pt, fill=brown!30, draw=none] (-3,3) -- (3,-3) -- (-3,-3) -- cycle;
			\draw[line width=1pt, brown] (-3,3) -- (3,-3);
			\draw[line width=1pt, fill=blue!30, draw=blue] (-2,2) -- (2,-2) -- (-2,-2) -- cycle;
			\draw[line width=2pt, domain=0:pi/2, smooth, variable=\x, black, fill=gray!30, scale=2] plot ({2*cos(\x r)-1}, {2*sin(\x r)-1});
			\draw[line width=1pt, red] (-2,2) -- (2,-2);
			\node at (0,0.6) {$D\backslash \Omega$};
			\node at (-0.5,-1.0) {$\Omega$};
			\node at (-0.6,-0.1) {$\Gamma_C$};
			\node at (-1.5,-2.3) {$\Gamma_D$};
			\node at (-1.5,2.3) {$\Gamma_N$};
		\end{tikzpicture}
		\caption{\label{figex3new}The buried object for Example 2.}
	\end{figure}
We use as 20 basis functions $\varphi_n(\theta)=\cos(4n\theta)$ for $n\in \{0,1,\ldots,19\}$ with $\theta\in [0,\pi/2]$ and $n_f=300$ on $\Gamma_N$ for the boundary element collocation method. Further, we use $\gamma=1/2$ and $\gamma=2$. For the FMreg and LSMreg, we use an equidistant grid of size $100\times 100$ of $[0,1]\times [0,1]$. We choose the level--curve$=0.25$  and $=-1$ as threshold value for the FMreg imaging functional when $\gamma=1/2$ and $\gamma=2$, respectively. For the LSMreg imaging functional we choose level--curve$=-1$ and $=-1.5$ as threshold value for $\gamma=1/2$ and $\gamma=2$. In Figure \ref{figex3new2} and \ref{figex3new21}, we present the reconstruction results with the FMreg and the LSMreg.
	\begin{figure}[H]
		\centering
		\includegraphics[width=6.5cm]{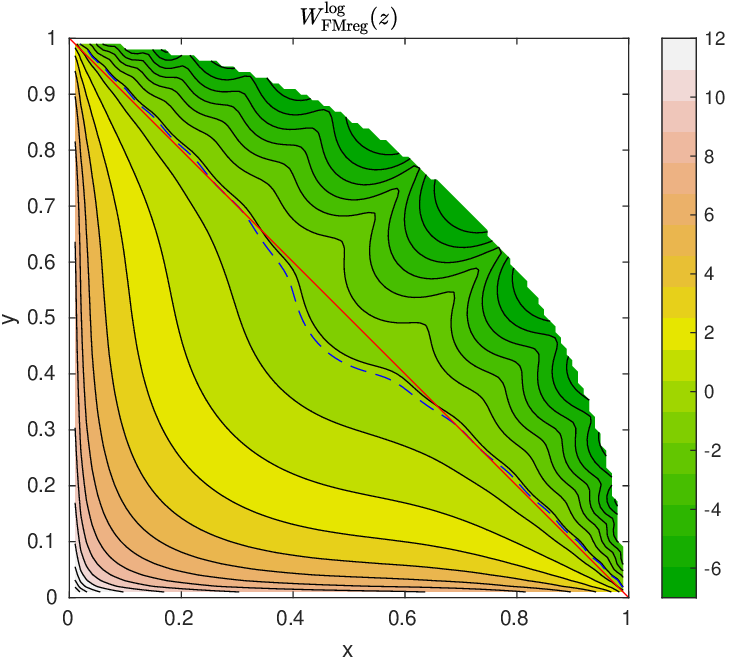}
		\includegraphics[width=6.5cm]{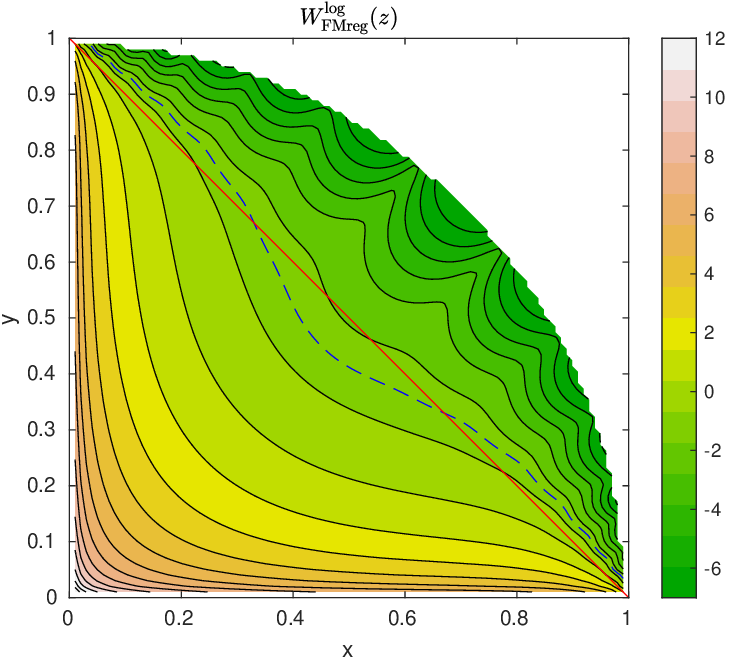}
		\caption{\label{figex3new2}Reconstructions via the FMreg imaging functional. The dashed blue line is the approximated boundary of the reconstruction. Left: $\gamma=1/2$ and Right: $\gamma=2$.}
	\end{figure}
	
		\begin{figure}[H]
		\centering
		\includegraphics[width=6.5cm]{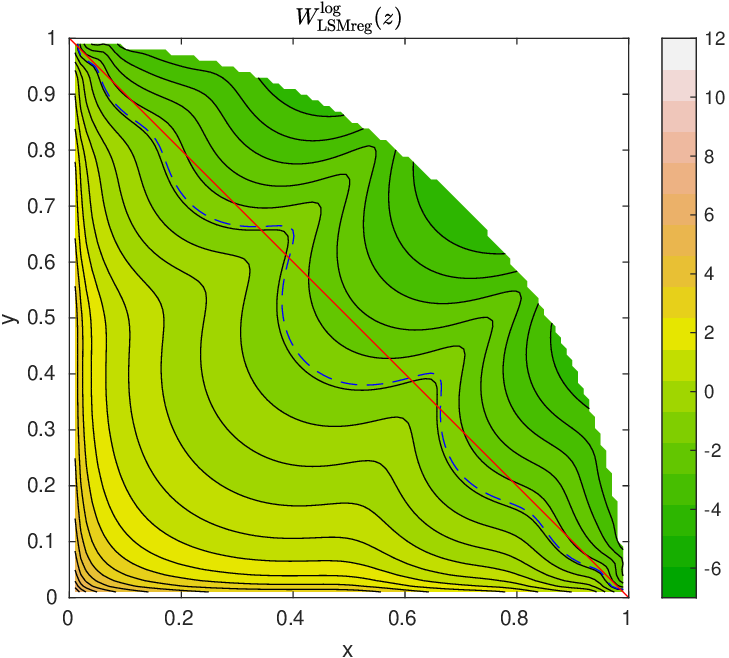}
		\includegraphics[width=6.5cm]{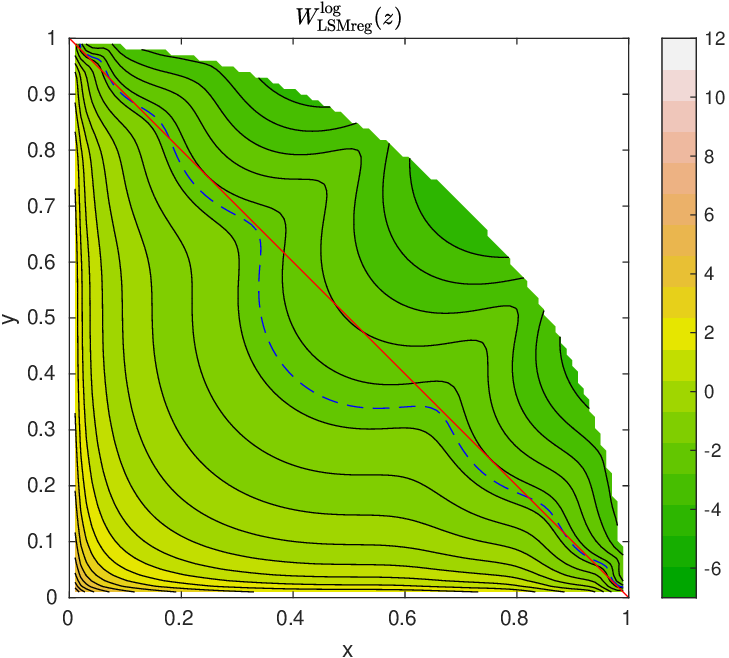}
		\caption{\label{figex3new21}Reconstructions via the LSMreg imaging functional. The dashed blue line is the approximated boundary of the reconstruction. Left: $\gamma=1/2$ and Right: $\gamma=2$.}
	\end{figure}
We observe reasonable reconstructions using the FMreg and LSMreg which is much better than the one for the previous example. The FMreg performs much better. \\

\noindent {\bf Example 3:} Next, we use an ellipse with half-axis $a=1.1$ and $b=1$ that is half buried. Hence, $\Gamma_N$ and $\Gamma_D$ are given in parameter form as $x_1 (\theta)=1.1\cdotp \cos(\theta)$ and $x_2 (\theta)=\sin(\theta)$ with $\theta\in [0,\pi]$ and $\theta\in (\pi,2\pi)$, respectively. The boundary $\Gamma_C$ is given by $x_1 (\theta)=1.1\cdotp\cos(\theta)$ and $x_2(\theta) =0.5\cdotp \sin(\theta)$ with $\theta\in (\pi,2\pi)$. The situation is depicted in Figure \ref{figex4new}. 
 \begin{figure}[H]
		\centering
		\begin{tikzpicture}
		    	\draw [fill=white!30, draw=none] (-3,-3) rectangle (3,3);
			\draw [fill=brown!30, draw=none] (-3,-3) rectangle (3,0);
			\draw[line width=1pt, brown] (-3,0) -- (3,0);
			\draw[line width=1pt, domain=0:2*pi, smooth, variable=\x, black, fill=gray!30, scale=2] plot ({1.1*cos(\x r)}, {sin(\x r)});
			\draw[line width=1pt, domain=pi+0.01:2*pi-0.01, smooth, variable=\x, blue, fill=blue!30, scale=2] plot ({1.1*cos(\x r)}, {sin(\x r)});
			\draw[line width=1pt, domain=pi:2*pi, smooth, variable=\x, blue,fill=gray!30, scale=2] plot ({1.1*cos(\x r)}, {0.5*sin(\x r)});
			\draw[line width=1pt, domain=pi:2*pi, smooth, variable=\x, red,fill=none, scale=2] plot ({1.1*cos(\x r)}, {0.5*sin(\x r)});
			\draw[line width=2pt, domain=0:pi, smooth, variable=\x, black, fill=none, scale=2] plot ({1.1*cos(\x r)}, {sin(\x r)});
			\node at (0,0.6) {$D\backslash \overline{\Omega}$};
			\node at (0,-1.7) {$\Omega$};
			\node at (-1.5,-1.9) {$\Gamma_D$};
			\node at (-1.5,1.9) {$\Gamma_N$};
			\node at (-1.0,-0.6) {$\Gamma_C$};
		\end{tikzpicture}
		\caption{\label{figex4new}The buried object for Example 3.}
	\end{figure}
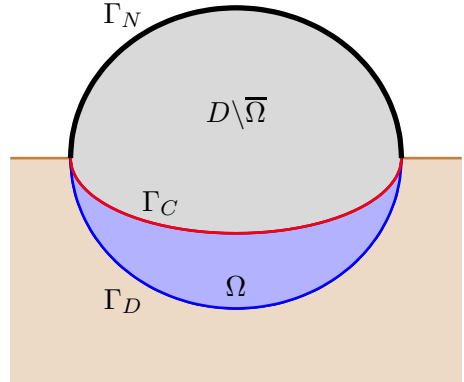
We use as 20 basis functions $\varphi_n(\theta)=\cos(2n\theta)$ for $n\in \{0,1,\ldots,19\}$ with $\theta\in [0,\pi/2]$ and $n_f=300$ on $\Gamma_N$ for the boundary element collocation method. For this example, we use an equidistant grid of size $100\times 100$ of $[-1.1,1.1]\times [-1.1,1.1]$. To reconstruct $\Gamma_C$ we choose the level--curve$=2.5$  and $=1.5$ as threshold value for the FMreg imaging functional when $\gamma=1/2$ and $\gamma=2$, respectively. In Figure \ref{figex4new2}, we present the reconstruction results with the FMreg.
	\begin{figure}[H]
		\centering
		\includegraphics[width=6.5cm]{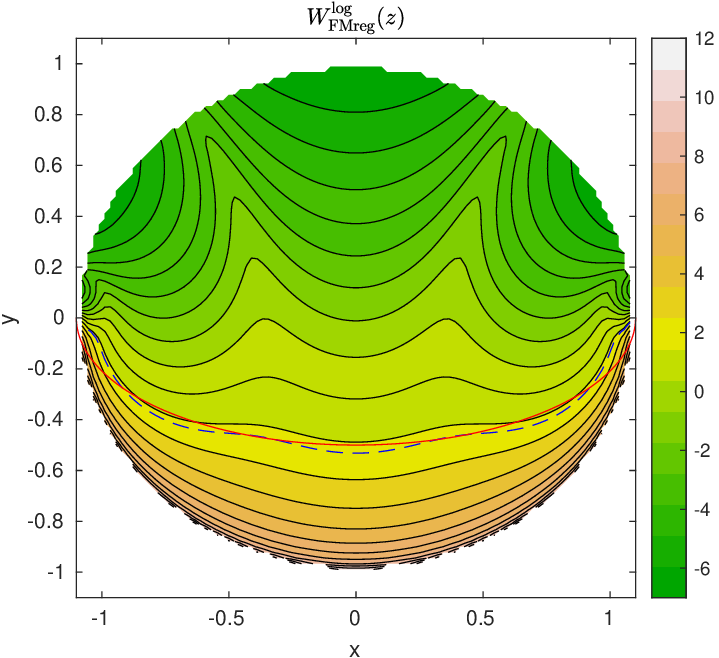}
		\includegraphics[width=6.5cm]{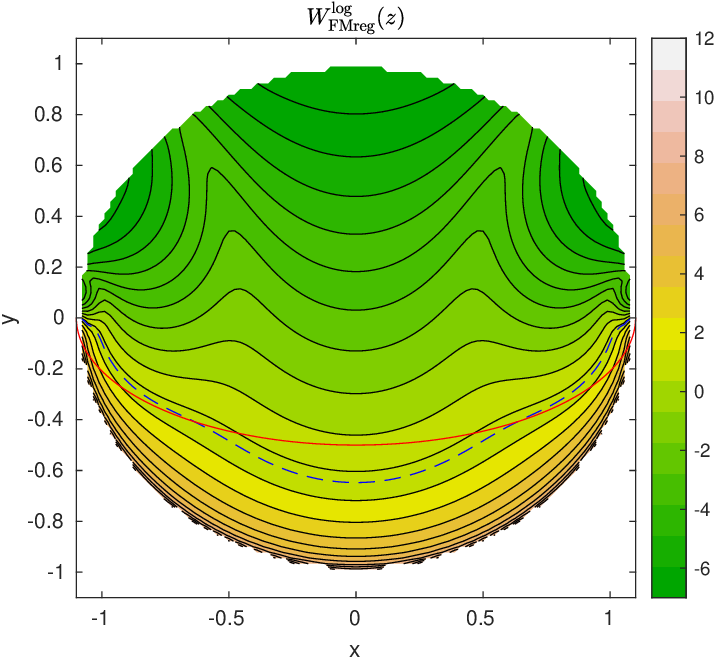}
		\caption{\label{figex4new2}Reconstructions via the FMreg imaging functional. The dashed blue line is the approximated boundary of the reconstruction. Left: FMreg for $\gamma=1/2$ and Right: $\gamma=2$.}
	\end{figure}
We observe good reconstructions using the FMreg which is much better than the one for the previous two examples.  

\section{Summary and Conclusion}
We have extended the applicability of the LSM and FM for recovering an unknown corroded boundary from partial Cauchy data. This main idea that we employed was to embed the `defective' region into a `healthy' region. This allows use to consider the problem of finding the corroded region. We also want to note that the analysis presented here also implies that the generalized linear sampling method \cite{GLSM} is a valid reconstruction method. In the numerical results, we have seen that the threshold value seems to depend on $\gamma$. To obtain the correct dependence, a further investigation is needed. Moreover, the FMreg seems to provide better reconstructions than the LSMreg which might be due to the choice of the regularization parameter. A thorough investigation is needed to find out if the correct choice of the regularization parameter on both methods can give similar reconstruction results. In sum, we are able to obtain reasonable reconstructions with both the FMreg and the LSMreg giving us a good idea of how much of the buried obstacle is corroded although the problem at hand is severely ill-posed. \\

\noindent{\bf Acknowledgments:} The research of I. Harris and H. Lee is partially supported by the NSF DMS Grant 2107891. 



\end{document}